\begin{document}

\newtheorem{theorem}{Theorem}[section]
\newtheorem{lemma}[theorem]{Lemma}
\newtheorem{corollary}[theorem]{Corollary}
\newtheorem{proposition}[theorem]{Proposition}

\theoremstyle{definition}
\newtheorem*{definition}{Definition}
\newtheorem*{remark}{Remark}
\newtheorem*{remarks}{Remarks}
\newtheorem*{example}{Example}

\numberwithin{equation}{section}


\def\cA{\mathcal A}
\def\cB{\mathcal B}
\def\cC{\mathcal C}
\def\cD{\mathcal D}
\def\cE{\mathcal E}
\def\cF{\mathcal F}
\def\cG{\mathcal G}
\def\cH{\mathcal H}
\def\cI{\mathcal I}
\def\cJ{\mathcal J}
\def\cK{\mathcal K}
\def\cL{\mathcal L}
\def\cM{\mathcal M}
\def\cN{\mathcal N}
\def\cO{\mathcal O}
\def\cP{\mathcal P}
\def\cQ{\mathcal Q}
\def\cR{\mathcal R}
\def\cS{\mathcal S}
\def\cU{\mathcal U}
\def\cT{\mathcal T}
\def\cV{\mathcal V}
\def\cW{\mathcal W}
\def\cX{\mathcal X}
\def\cY{\mathcal Y}
\def\cZ{\mathcal Z}


\def\sA{\mathscr A}
\def\sB{\mathscr B}
\def\sC{\mathscr C}
\def\sD{\mathscr D}
\def\sE{\mathscr E}
\def\sF{\mathscr F}
\def\sG{\mathscr G}
\def\sH{\mathscr H}
\def\sI{\mathscr I}
\def\sJ{\mathscr J}
\def\sK{\mathscr K}
\def\sL{\mathscr L}
\def\sM{\mathscr M}
\def\sN{\mathscr N}
\def\sO{\mathscr O}
\def\sP{\mathscr P}
\def\sQ{\mathscr Q}
\def\sR{\mathscr R}
\def\sS{\mathscr S}
\def\sU{\mathscr U}
\def\sT{\mathscr T}
\def\sV{\mathscr V}
\def\sW{\mathscr W}
\def\sX{\mathscr X}
\def\sY{\mathscr Y}
\def\sZ{\mathscr Z}


\def\fA{\mathfrak A}
\def\fB{\mathfrak B}
\def\fC{\mathfrak C}
\def\fD{\mathfrak D}
\def\fE{\mathfrak E}
\def\fF{\mathfrak F}
\def\fG{\mathfrak G}
\def\fH{\mathfrak H}
\def\fI{\mathfrak I}
\def\fJ{\mathfrak J}
\def\fK{\mathfrak K}
\def\fL{\mathfrak L}
\def\fM{\mathfrak M}
\def\fN{\mathfrak N}
\def\fO{\mathfrak O}
\def\fP{\mathfrak P}
\def\fQ{\mathfrak Q}
\def\fR{\mathfrak R}
\def\fS{\mathfrak S}
\def\fU{\mathfrak U}
\def\fT{\mathfrak T}
\def\fV{\mathfrak V}
\def\fW{\mathfrak W}
\def\fX{\mathfrak X}
\def\fY{\mathfrak Y}
\def\fZ{\mathfrak Z}


\def\C{{\mathbb C}}
\def\F{{\mathbb F}}
\def\K{{\mathbb K}}
\def\L{{\mathbb L}}
\def\N{{\mathbb N}}
\def\Q{{\mathbb Q}}
\def\R{{\mathbb R}}
\def\Z{{\mathbb Z}}


\def\eps{\varepsilon}
\def\mand{\qquad\mbox{and}\qquad}
\def\\{\cr}
\def\({\left(}
\def\){\right)}
\def\[{\left[}
\def\]{\right]}
\def\<{\langle}
\def\>{\rangle}
\def\fl#1{\left\lfloor#1\right\rfloor}
\def\rf#1{\left\lceil#1\right\rceil}
\def\le{\leqslant}
\def\ge{\geqslant}
\def\ds{\displaystyle}

\def\xxx{\vskip5pt\hrule\vskip5pt}
\def\yyy{\vskip5pt\hrule\vskip2pt\hrule\vskip5pt}
\def\imhere{ \xxx\centerline{\sc I'm here}\xxx }

\newcommand{\comm}[1]{\marginpar{
\vskip-\baselineskip \raggedright\footnotesize
\itshape\hrule\smallskip\tiny\textcolor{Blue}{#1}\par\smallskip\hrule}}


\def\distr{{\tt d}}
\def\b{{\frak b}}
\def\c{{\frak c}}
\def\error{{\underline o(1)}}
\def\e{\mathbf{e}}
\def\cc#1{\textcolor{Blue}{#1}}


\title{\sc Convolutions with probability distributions,
zeros of $L$-functions, and the least quadratic nonresidue}

\author{
{\sc William D.\ Banks} \\
{Department of Mathematics} \\
{University of Missouri} \\
{Columbia, MO 65211 USA} \\
{\tt bankswd@missouri.edu}
\and
{\sc Konstantin A.\ Makarov} \\ 
{Department of Mathematics} \\
{University of Missouri} \\
{Columbia, MO 65211 USA} \\
{\tt makarovk@missouri.edu}}

\maketitle

\newpage

\begin{abstract}
Let $\distr$ be the density of a probability distribution
that is compactly supported in the positive semi-axis. Under certain mild conditions
we show that 
$$
\lim_{x\to\infty}x\sum_{n=1}^\infty
\frac{\distr^{*n}(x)}{n}=1,\qquad\text{where}\quad
\distr^{*n}:=\underbrace{\,\distr *\distr*\cdots*\distr\,}_{n\text{~times}}.
$$
We also  show that if $c>0$ is a
given constant for which
the function $f(k):=\widehat\distr(k)-1$ does not vanish on the line
$\{k\in\C:\Im\,k=-c\}$, where $\widehat\distr$ is the Fourier transform of $\distr$,
then one has the asymptotic expansion
$$
\sum_{n=1}^\infty\frac{\distr^{*n}(x)}{n}=\frac{1}{x}\bigg(1+\sum_k m(k)
e^{-ikx}+O(e^{-c x})\bigg)\qquad (x\to +\infty),
$$
where the sum is taken over those zeros $k$ of $f$ that lie in
the strip $\{k\in\C:-c<\Im\,k<0\}$, $m(k)$ is the multiplicity of any such zero, and 
the implied constant depends only on $c$.
For a given distribution 
of this type, we briefly describe the location
of the zeros $k$ of $f$ in the lower half-plane $\{k\in\C:\Im\,k<0\}$.

For an odd prime $p$, let $n_0(p)$ be the least natural number such that $(n|p)=-1$,
where $(\cdot|p)$ is the Legendre symbol.
As an application of our work on probability distributions, 
we generalize a well known result
of Heath-Brown concerning the exhibited behavior of the Dirichlet $L$-function
$L(s,(\cdot|p))$ under the assumption that the Burgess bound
$n_0(p)\ll p^{1/(4\sqrt{e})+\eps}$ cannot be improved.
\end{abstract}

\newpage

\tableofcontents

\newpage

\section{Statement of results}

In this paper, we establish a very general theorem concerning
convolutions of certain compactly supported probability distributions.
As an application to analytic number theory, we use our theorem
to generalize a well known result of Heath-Brown concerning the behavior
of the Dirichlet $L$-function attached to the Legendre symbol under an
assumption that the Burgess bound on the least quadratic nonresidue
cannot be improved.

\subsection{Convolutions with probability distributions}
\label{sec:c-intro}

Let $\distr$ be the density of a probability distribution that is
supported in a finite interval $[a,b]$ with $a>0$.
Assume that $\distr$ is twice continuously differentiable on $(a,b)$, and that
$\distr(a)\distr(b)\ne 0$. Put
\begin{equation}
\label{eq:series}
F_\distr(x):=\sum_{n=1}^\infty\frac{\distr^{*n}(x)}{n}\qquad(x>0),
\end{equation}
where $\distr^{*n}$ denotes the $n$-fold convolution of $\distr$ with itself, i.e.,
$$
\distr^{*n}:=\underbrace{\,\distr*\distr*\cdots*\distr\,}_{n\text{~times}}\qquad(n\in\N).
$$
Since $\distr^{*n}(x)=0$ whenever $x<na$, for every $x>0$ the
series~\eqref{eq:series} has only finitely many nonzero terms, hence
the function $F_\distr$ is well-defined pointwise (but not absolutely summable; see Corollary \ref{cor:notsum}).

In this paper, we show that the leading term
in the asymptotic expansion of $F_\distr(x)$ as
$x\to\infty$ is universal, i.e., it does not depend on the particular choice of 
$\distr$  for a wide class of  distributions, 
while the (exponentially small) higher order terms of the asymptotics
are determined by the roots in the lower half-plane of the equation 
$$
\widehat\distr(k)=1.
$$
Here  $\widehat\distr$ denotes the Fourier transform of $\distr$, which is an entire function given by
$$
\widehat\distr(k):=\int_a^b\distr(x)e^{ikx}\,dx\qquad(k\in\C).
$$

\newpage

\begin{theorem}
\label{thm:c-main}
For any constant $c>0$, the function $f(k):=\widehat\distr(k)-1$
has only finitely many zeros in the strip
$\Pi_c:=\{k\in\C:-c<\Im\,k<0\}$.
Suppose that $f$ does not vanish on the line $\{k\in\C:\Im\,k=-c\}$. Then
\begin{equation}\label{asympt}
F_\distr(x)=\frac{1}{x}\bigg(1+\sum_k m(k)
e^{-ikx}+E(c, x)e^{-c x}\bigg)\qquad(x>0),
\end{equation}
where the sum is taken over those zeros $k$ of $f$ that lie in
$\Pi_c$, $m(k)$ is the multiplicity of any such zero, and
\begin{equation}\label{ecx}
E(c, x):=
\frac{1}{2\pi i}\int_\R \left (
\frac{\widehat\distr'(u-ic)}{1-\widehat\distr(u-ic)}
-\widehat\distr'(u-ic)\right )\,e^{-iux}\,du.
\end{equation}
\end{theorem}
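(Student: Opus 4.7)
The strategy is to represent $xF_\distr(x)$ as an inverse Fourier--Laplace integral along a horizontal line in the upper half-plane, and then to shift the contour downward across $\overline{\Pi_c}$, collecting the residues of the meromorphic integrand along the way. The key formal identity is
$$
\widehat{xF_\distr}(k)=-i\,\frac{\widehat\distr'(k)}{1-\widehat\distr(k)},
$$
obtained by interchanging the sum over $n$ with the Fourier transform and using $\widehat{\distr^{*n}}(k)=\widehat\distr(k)^n$ together with $\widehat{xg}(k)=-i(\widehat g)'(k)$. This identity is rigorous as a Laplace transform analytic on $\{\Im k>0\}$, where $|\widehat\distr(k)|<1$ strictly (since $\distr$ is a probability density supported in $[a,b]$ with $a>0$, so that $|\widehat\distr(u+i\eta)|\le\int_a^b\distr(x)e^{-\eta x}\,dx<1$ for $\eta>0$). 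I prefer working with $xF_\distr$ over $F_\distr$ itself, because it produces a single-valued meromorphic integrand $\widehat\distr'/(1-\widehat\distr)$ in place of the branched $-\log(1-\widehat\distr)$.

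For the finiteness assertion, a single integration by parts in $\widehat\distr(k)=\int_a^b\distr(x)e^{ikx}\,dx$, using $\distr\in C^2$ on $(a,b)$ and the nonvanishing boundary values at $a,b$, gives $|\widehat\distr(k)|=O(|k|^{-1})$ uniformly on the closed strip $\overline{\Pi_c}$ as $|\Re k|\to\infty$; hence $f(k)\to-1$ in that regime, so the zero set of $f$ in $\overline{\Pi_c}$ is bounded. Combined with the hypothesis that $f$ does not vanish on $\{\Im k=-c\}$ and the analyticity of $f$, this forces the set of zeros in $\Pi_c$ to be finite.

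Starting from the Bromwich representation
$$
xF_\distr(x)=\frac{1}{2\pi i}\int_{\Im k=\eta}\frac{\widehat\distr'(k)}{1-\widehat\distr(k)}\,e^{-ikx}\,dk\qquad(\eta>0),
$$
I would close the contour into a large rectangle whose bottom edge lies on $\{\Im k=-c\}$, verify that the vertical sides vanish as $R\to\infty$ by the decay established above, and apply the residue theorem. Since $\widehat\distr'/(1-\widehat\distr)=-f'/f$ is minus the logarithmic derivative of $f$, it has a simple pole of residue $-m(k)$ at every zero of $f$ of multiplicity $m(k)$: residue $-1$ at the simple zero $k=0$ (where $f'(0)=i\int_a^b x\distr(x)\,dx\ne 0$), and $-m(k_j)$ at each $k_j\in\Pi_c$. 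Accounting for the clockwise orientation of the downward shift, the residue sum gives exactly the main term $1$ and the sum $\sum_k m(k)e^{-ikx}$ in \eqref{asympt}. The remaining integral along $\{\Im k=-c\}$, after the substitution $k=u-ic$, factors as $e^{-cx}$ times an integral over $\R$; writing that integrand as $[\widehat\distr'/(1-\widehat\distr)-\widehat\distr']+\widehat\distr'$ isolates the piece of \eqref{ecx} responsible for $e^{-cx}E(c,x)$, while the added-back $\widehat\distr'$-piece, being the Fourier transform of a compactly supported function, contributes nothing in the asymptotic regime $x>b$.

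The main technical obstacle is the rigorous justification of the Bromwich inversion, because $xF_\distr\notin L^1\cup L^2$ on $\R_+$ (indeed it tends to $1$ at infinity); I plan to handle this by treating $\widehat{xF_\distr}$ as a genuine Laplace transform analytic in $\{\Im k>0\}$ and invoking the Bromwich inversion theorem at continuity points of $xF_\distr$. A related subtlety, which explains the particular form of \eqref{ecx}, is that the condition $\distr(a)\distr(b)\ne 0$ forces only the $O(|u|^{-1})$ decay of $\widehat\distr'(u-ic)$ along horizontal lines, so the raw integrand $\widehat\distr'/(1-\widehat\distr)$ is not absolutely integrable; subtracting $\widehat\distr'(u-ic)$ reduces it to $\widehat\distr'\widehat\distr/(1-\widehat\distr)=O(|u|^{-2})$, which gives the absolute convergence required to produce a uniform bound on $E(c,x)$ depending only on $c$.
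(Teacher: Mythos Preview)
Your proposal is correct and lands on the same contour-integral of the logarithmic derivative $\widehat\distr'/(1-\widehat\distr)$ that the paper uses, but you reach it by a somewhat different route. The paper first proves $F_\distr\in L^2(\R)$ (its Lemma~2.2), inverts on the real axis to get
\[
F_\distr(x)-\distr(x)=-\frac{1}{2\pi}\int_\R\bigl(\log(1-\widehat\distr(k))+\widehat\distr(k)\bigr)e^{-ikx}\,dk,
\]
multiplies by $x$ and integrates by parts; the logarithmic singularity at $k=0$ then forces a principal-value integral and a small lower semicircle, so the term ``$1$'' arises as $\tfrac12$ (boundary term from the jump of $\log$) plus $\tfrac12$ (half-residue on the semicircle). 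Only after that does the contour get pushed down to $\Im k=-c$. Your Bromwich start at $\Im k=\eta>0$ sidesteps both the $L^2$ lemma and the V.P./half-residue bookkeeping: the pole at $k=0$ is swept up as a single full residue during the downward shift. What the paper's route buys is that every inversion step is an honest $L^1$ or $L^2$ Fourier inversion on the real line; what your route buys is a cleaner residue count at $0$, at the cost of justifying Bromwich inversion for the non-$L^1$ function $xF_\distr$---which, as you note, is handled by the same subtraction of $\widehat\distr'$ that both proofs ultimately need. Note also that, like the paper's argument, your computation really establishes \eqref{asympt} for $x>b$ (where the added-back $\widehat\distr'$-piece vanishes); the paper states the identity for $x>0$ but its own derivation begins with ``Since $\distr(x)=0$ for $x>b$'', so you are not missing anything the paper supplies.
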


\medskip

\begin{remarks}
Note that for any $c>0$ satisfying the hypotheses of Theorem~\ref{thm:c-main},
the quantity $E(c,x)$ satisfies the uniform bound
$$
|E(c,x)|\le\frac{1}{2\pi}\,\big\|f_c\big\|_{L^1(\R)}\qquad(x>0),
$$
where
$$
f_c(u):=\frac{\widehat\distr'(u-ic)}{1-\widehat\distr(u-ic)}
-\widehat\distr'(u-ic)\qquad(u\in\R).
$$
\end{remarks}

\medskip

The proof of Theorem~\ref{thm:c-main} is given in
\S\ref{sec:proofc}, and in \S\ref{sec:solns} we briefly explore the location of the zeros
of the function $f(k):=\widehat\distr(k)-1$ that lie in the lower half-plane. 

\subsection{The least quadratic nonresidue}

For any odd prime $p$, let $n_0(p)$ denote the least positive quadratic
nonresidue modulo $p$; that is,
$$
n_0(p):=\min\{n\in\N:(n|p)=-1\},
$$
where $(\cdot|p)$ is the \emph{Legendre symbol}.
The first nontrivial bound on $n_0(p)$ was given by
Gauss~\cite[Article~129]{Gauss}, who showed that
$n_0(p)<2\sqrt{p}+1$ holds for every prime $p\equiv 1\pmod 8$.
Vinogradov~\cite{Vino} proved that $n_0(p)\ll p^\kappa$ holds for all
primes $p$ provided that $\kappa>1/(2\sqrt{e})$, and later,
Burgess~\cite{Burg} extended this range to include all real numbers
$\kappa>1/(4\sqrt{e})$.  The latter result has not been
improved since 1957.

An old conjecture of Vinogradov asserts that the bound $n_0(p)\ll p^\eps$ holds for
every fixed $\eps>0$. Linnik~\cite{Linnik0} showed that Vinogradov's conjecture
is true under the \emph{Extended Riemann Hypothesis} (ERH). A decade later,
Ankeny~\cite{Ankeny} proved that the stronger bound
$n_0(p)\ll (\log p)^2$ holds under the ERH.

It is natural to wonder what bounds on $n_0(p)$ can be established under
weaker conditional hypotheses than the ERH.
The pioneering work in this direction (which largely motivates the present paper)
is an unpublished analysis of Heath-Brown concerning
the behavior of the Dirichlet $L$-function
$L(s,(\cdot|p))$ under an assumption that the Burgess bound is
\emph{tight}, i.e., that the lower bound $n_0(p)\ge p^{1/(4\sqrt{e})}$
holds for infinitely many primes $p$; we refer the reader to
Diamond \emph{et al}~\cite[Appendix]{DMV} for a superb
account of Heath-Brown's methods and results.

In this paper, we modify and extend Heath-Brown's ideas as follows.
Throughout, let $\kappa,\lambda$ be fixed real numbers such that
\begin{equation}\label{lamkaphyp}0<\kappa\le \frac{\lambda}{\sqrt{e}}\le \frac{1}{4\sqrt{e}}.
\end{equation} 
For every odd prime $p$, put
\begin{equation}
\label{eq:NpXdefi}
\sN_p(X):=\{n\le X:(n|p)=-1\}\qquad(X>0).
\end{equation}
We assume that there is an infinite set of primes $\sP$ for which
\begin{equation}
\label{eq:n0pkap}
n_0(p)\ge p^\kappa\qquad(p\in\sP).
\end{equation}
 Our aim is to understand how the zeros of
$L(s,(\cdot|p))$ are constrained by the condition \eqref{eq:n0pkap} 
(as previously mentioned, such a set $\sP$ cannot exist under the ERH by the
work of Linnik~\cite{Linnik0}).

In addition to \eqref{eq:n0pkap} we also assume that for any fixed $\theta\ge 0$ the
estimate
\begin{equation}
\label{eq:Np-est}
\big|\sN_p(p^\theta)\big|=(\delta(\theta)+\error)p^\theta
\end{equation}
holds, where $\error$ denotes an error term that tends to zero
as $p\to\infty$ with primes $p$ lying in the set $\sP$, and
$\delta$ is a function of the form
\begin{equation}
\label{eq:intrho}
\delta(\theta):=\frac12\int_0^\theta\distr(u)\,du\qquad(\theta\ge 0)
\end{equation}
with some probability distribution $\distr$ that is supported in the interval $[\kappa,\lambda]$ and twice continuously
differentiable on $(\kappa,\lambda)$,
with $\distr(\kappa)\distr(\lambda)\ne 0$.

We remark that  the inequalities in \eqref{lamkaphyp} are optimal in a
certain sense (see Lemmas \ref{lem:hildebrand}
and Theorem \ref{low} below). The main result of the paper is as follows.

\begin{theorem}
\label{thm:b-main}
Under the hypotheses
\eqref{eq:n0pkap},
\eqref{eq:Np-est}
and \eqref{eq:intrho}, for every
nonzero root $k$
of the equation $\widehat\distr(k)=1$ there is a
complex sequence $(\varrho_p)_{p\in\sP}$ with
$L(\varrho_p,(\cdot|p))=0$ such that $(\varrho_p-1)\log p\to -ik$ as $p\to\infty$
with $p\in\sP$.
\end{theorem}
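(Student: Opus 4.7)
The strategy is to rescale the $L$-function around $s=1$ via $s=1+z/\log p$, to realize it as a Laplace transform of the character sum $S_p(x):=\sum_{n\le x}(n|p)$, to pass to the limit $p\to\infty$ along $\sP$, and to apply Hurwitz's theorem to match zeros of $L(s,(\cdot|p))$ near $s=1$ with zeros of the limiting function.

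Since $S_p(x)=x-2|\sN_p(x)|+O(1)$ for $x<p$, hypothesis \eqref{eq:Np-est} yields $S_p(p^\theta)/p^\theta=\sigma(\theta)+\error$ at each fixed $\theta$, where $\sigma(\theta):=1-2\delta(\theta)$ satisfies $\sigma(0)=1$, $\sigma'=-\distr$, and $\sigma\equiv 0$ on $[\lambda,\infty)$. Partial summation together with the substitution $x=p^\theta$ gives
$$L(s,(\cdot|p))=s\log p\int_0^\infty S_p(p^\theta)\,p^{-s\theta}\,d\theta\qquad(\Re s>0),$$
so that specializing to $s=1+z/\log p$ produces
$$\Phi_p(z):=\frac{L(1+z/\log p,(\cdot|p))}{\log p}=\(1+\frac{z}{\log p}\)\int_0^\infty\frac{S_p(p^\theta)}{p^\theta}\,e^{-z\theta}\,d\theta.$$
An integration by parts on $\int_0^\infty\sigma(\theta)e^{-z\theta}\,d\theta$ (using $\sigma'=-\distr$ and the compact support of $\sigma$) identifies the candidate limit as
$$\Phi(z):=\int_0^\lambda\sigma(\theta)e^{-z\theta}\,d\theta=\frac{1-\widehat\distr(iz)}{z},$$
which is entire since $\widehat\distr(0)=1$, and whose zeros are precisely the points $z=-ik$ with $k\ne 0$ a root of $\widehat\distr(k)=1$, each of multiplicity $m(k)$.

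Next I would prove that $\Phi_p\to\Phi$ uniformly on compact subsets of $\C$ as $p\to\infty$ along $\sP$. Granting this, fix a nonzero root $k_0$ of $\widehat\distr(k)=1$ of multiplicity $m:=m(k_0)$, so that $z_0:=-ik_0\ne 0$ is a zero of $\Phi$ of order $m$. Choose a small disk $D\subset\C\setminus\{0\}$ about $z_0$ containing no other zero of $\Phi$. Because the Legendre symbol is non-principal, $L(s,(\cdot|p))$ is entire in $s$, so $\Phi_p$ is entire in $z$; Hurwitz's theorem then guarantees that for all $p\in\sP$ sufficiently large, $\Phi_p$ has exactly $m$ zeros (counted with multiplicity) in $D$. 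Picking any such zero $z_p$ and setting $\varrho_p:=1+z_p/\log p$ yields $L(\varrho_p,(\cdot|p))=0$ together with $(\varrho_p-1)\log p=z_p\to z_0=-ik_0$, as required.

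The main obstacle is the uniform convergence $\Phi_p\to\Phi$. Hypothesis \eqref{eq:Np-est} supplies only pointwise convergence of $S_p(p^\theta)/p^\theta$ in $\theta$, but this already yields pointwise convergence $\Phi_p(z)\to\Phi(z)$ for each fixed $z$: the contribution from $\theta\in[0,1]$ is handled by dominated convergence using the trivial bound $|S_p(x)|\le x$, while the tail $\theta>1$ contributes $O(p^{-1/2}\log p)$ uniformly in bounded $z$ thanks to the Polya--Vinogradov estimate $|S_p(x)|\ll\sqrt{p}\log p$. Since $\{\Phi_p\}$ is a locally bounded family of entire functions, Vitali's convergence theorem upgrades pointwise convergence to uniform convergence on compact subsets of $\C$, closing the argument.
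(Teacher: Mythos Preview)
Your argument is correct and constitutes a genuinely different---and considerably shorter---proof than the one given in the paper.

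The paper proceeds indirectly through the derivative $S_1'(\theta)$ of the limiting prime-nonresidue sum \eqref{eq:S1defn}.  It first establishes via an inclusion--exclusion analysis and Laplace-transform identities that $S_1'(\theta)=\tfrac12\sum_n \distr^{*n}(\theta)/n$, then invokes Theorem~\ref{thm:c-main} to expand this in terms of the zeros of $\widehat\distr(k)-1$.  A second expression for $S_1'(\theta)$ is obtained from the explicit formula for $\psi(x,\chi)$ (Lemma~\ref{lem:fame}), which in turn rests on the log-free zero-density estimate (Lemma~\ref{lem:fortune}) and the Deuring--Heilbronn phenomenon (Lemma~\ref{lem:exceptional}).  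The two expansions are then matched via Wiener's lemma (Lemma~\ref{lem:gift}).

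Your route bypasses all of this: you realise $L(1+z/\log p,(\cdot|p))/\log p$ directly as a Laplace transform of the normalised character sum, observe that the limiting transform is the entire function $(1-\widehat\distr(iz))/z$, and appeal to Vitali plus Hurwitz.  The only analytic-number-theory input is P\'olya--Vinogradov, to control the tail $\theta>1$; the trivial bound $|S_p(x)|\le x$ handles $\theta\in[0,1]$.  The pointwise hypothesis \eqref{eq:Np-est} is exactly enough for dominated convergence, and local boundedness of $\{\Phi_p\}$ lets Vitali upgrade pointwise to locally uniform convergence.

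What the paper's approach buys is that Theorem~\ref{thm:c-main} is displayed as the engine behind the result, and the finer structure of the ``zeros attractor'' $\sL$ is made explicit (cf.\ Remark~\ref{remrem}).  What your approach buys is economy: no convolution theorem, no explicit formula, no log-free density estimate, no Deuring--Heilbronn.  Hurwitz also delivers the multiplicity statement for free, whereas the paper has to thread this through the comparison \eqref{eq:snowbound3}.
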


\medskip

\begin{remarks}
In the special case that $\kappa:=1/(4\sqrt{e})$ and $\lambda:=1/4$,
we show in \S\ref{sec:heath-brown}
that under hypothesis  \eqref{eq:n0pkap}  the condition \eqref{eq:Np-est}
is automatically met with the function $\delta$ given by
\begin{equation}
\label{eq:dragon}
\delta(\theta):=\left\{
\begin{array}{ll}
    0 &\quad \text{if}\quad \hbox{$0\le \theta\le 1/(4\sqrt{e})$,} \\
    \log(4\theta\sqrt{e}) &\quad  \text{if}\quad1/(4\sqrt{e})\le \theta\le 1/4,\\
    1/2 &\quad  \text{if}\quad\hbox{$\theta\ge 1/4$\, ,} \\
\end{array}
\right.
\end{equation}
and the probability distribution $\distr$ defined by
\begin{equation}
\label{eq:Burgessrho}
\distr(x):=\left\{
\begin{array}{ll}
    2x^{-1} &\quad  \text{if}\quad \hbox{$1/(4\sqrt{e})\le x\le 1/4$,}\\
    0 &\quad \hbox{otherwise.} \\
\end{array}
\right.
\end{equation}
Then, from the conclusion of Theorem~\ref{thm:b-main}
we recover the aforementioned result of Heath-Brown.
We also note that in any application
of Theorem~\ref{thm:b-main} it is useful to have information about
the location of the zeros of the function $f(k):=\widehat\distr(k)-1$.
General results of this nature are given in Proposition~\ref{prop:zeros},
where we outline a standard method for obtaining
such information.

We also remark that the normalization factor $\frac12$ in
hypothesis~\eqref{eq:intrho} is chosen to meet the unconditional requirement that 
$$
\lim_{p\to \infty}\frac{\big|\sN_p(p^\theta)\big|}{p^\theta}=\frac12 \qquad (\theta \ge 1/4);
$$
see Lemma \ref{lem:hildebrand}.
\end{remarks}

\medskip

The proof of Theorem~\ref{thm:b-main} (see \S\ref{sec:proofb} below)
can be summarized as follows. First, we show that the limit
\begin{equation}
\label{eq:S1defn}
S_1(\theta):=\lim_{\substack{p\to\infty\\ p\in\sP}}
\sum_{\substack{q\le p^\theta\\ (q|p)=-1}}q^{-1}
\end{equation}
exists for all $\theta\ge 0$, where the sum is taken
over prime nonresidues $q\le p^\theta$.  Using properties of the Laplace
transform we show that $S_1$ is continuously differentiable on $(\lambda,\infty)$
and that \begin{equation}
\label{eq:laplace_rocks}
S'_1(\theta)=\frac{1}{2}\sum_{n\in\N}\frac{\distr^{*n}(\theta)}{n}\qquad(\theta>\lambda),
\end{equation}
where $\distr^{*n}$ denotes the $n$-fold convolution
$\distr*\cdots*\distr$ as before.  Taking into account
Theorem~\ref{thm:c-main}, for any fixed $c>0$ we obtain
an estimate of the form
$$
S'_1(\theta)=\frac{1}{2\theta}+  \frac{1}{2\theta}\sum_k m(k)e^{-ik\theta}
+\frac{E(c,\theta)e^{-c\,\theta}}{2 \theta},
$$
where $E(c,\theta)$ is given by \eqref{ecx}.
On the other hand, expressing the derivative $S'_1(\theta)$ as a limit of
difference quotients and using standard estimates from number theory,
we derive that for any fixed $c>0$
one has
$$ 
S_1'(\theta)=\frac{1}{2\theta}+\frac{1}{2\theta}
\lim_{\substack{p\to\infty\\ p\in\sP}}
\sum_\varrho\frac{\widetilde m(\varrho)}{\varrho p^{(1-\varrho)\theta}}
+O(e^{-c\,\theta}),
$$
where each sum runs over the distinct zeros $\varrho=\beta+i\gamma$ of $L(s,(\cdot|p))$
in the region determined by the inequalities
$$
\beta>1-c/\log p \mand |\gamma|\le p,
$$  
and $\widetilde m(\varrho)$ is the multiplicity of any such zero.
A comparison of these two relations leads to the statement of Theorem~\ref{thm:b-main}.

Not too surprisingly,
 our proof of Theorem~\ref{thm:b-main} incorporates
principles that figure prominently in treatments of Linnik's Theorem,
including the log-free zero-density estimate (see Linnik~\cite{Linnik1}) and
the Deuring-Heilbronn phenomenon (see Linnik~\cite{Linnik2}).  On the other
hand, our method of applying the Laplace transform to derive
\eqref{eq:laplace_rocks} appears to be new.

\section{Proof of Theorem~\ref{thm:c-main}}
\label{sec:proofc}

We continue to assume that $\distr$ has the properties listed
in \S\ref{sec:c-intro}; that is, the function $\distr$ is twice
continuously differentiable on $(a,b)$, and
$\distr(a)\distr(b)\ne 0$.

\begin{lemma}
\label{lem:l1}
We have
$$
\widehat\distr(k)=1+id_1k-\tfrac12d_2k^2+O(k^3)\qquad(k\to 0),
$$
where
$$
d_1:=\int_a^b x\,\distr(x)\,dx\mand d_2:=\int_a^b x^2\,\distr(x)\,dx.
$$
Also,
$$
\widehat\distr(k)=\frac{1}{ik}\big(\distr(b)e^{ikb}
-\distr(a)e^{ika}\big)+O(k^{-2})\qquad(k\to\infty). 
$$
\end{lemma}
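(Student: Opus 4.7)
The lemma has two independent claims, and I would handle them in order, since both are standard consequences of properties of the Fourier transform of a compactly supported function.

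For the first claim (behavior as $k\to 0$), the plan is to use the Taylor expansion of the exponential. Since $\distr$ is supported in the compact interval $[a,b]$, the expansion
$$
e^{ikx}=1+ikx-\tfrac12 k^2x^2+R(k,x),\qquad |R(k,x)|\le \tfrac16|k|^3|x|^3\le \tfrac16|k|^3b^3,
$$
converges uniformly in $x\in[a,b]$. Multiplying by $\distr(x)$ and integrating over $[a,b]$, the constant term gives $\int_a^b\distr(x)\,dx=1$ because $\distr$ is a probability density, the linear term gives $id_1k$, and the quadratic term gives $-\tfrac12 d_2k^2$. The remainder is bounded by $\tfrac16|k|^3b^3\int_a^b\distr(x)\,dx=\tfrac16 b^3|k|^3=O(k^3)$. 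This disposes of the expansion at $k=0$.

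For the second claim (behavior as $k\to\infty$), the plan is to integrate by parts twice. First, writing $u=\distr(x)$ and $dv=e^{ikx}\,dx$, one obtains
$$
\widehat\distr(k)=\frac{1}{ik}\bigl(\distr(b)e^{ikb}-\distr(a)e^{ika}\bigr)-\frac{1}{ik}\int_a^b\distr'(x)e^{ikx}\,dx.
$$
A second integration by parts applied to the remaining integral, with $u=\distr'(x)$, produces a boundary contribution of order $O(1/k)$ together with an integral of $\distr''(x)e^{ikx}$ that is itself $O(1/k)$ since $\distr''$ is bounded on $[a,b]$. Combining, the remaining integral is $O(1/k)$, so after dividing by $ik$ it contributes $O(k^{-2})$ to $\widehat\distr(k)$, which is the claimed estimate.

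The only subtle point, and the one I would have to be careful about, is that the hypothesis only guarantees that $\distr$ is twice continuously differentiable on the \emph{open} interval $(a,b)$, whereas the integrations by parts above manipulate $\distr$, $\distr'$ and $\distr''$ at the endpoints $a$ and $b$. I would handle this by first integrating by parts on $[a+\eps,b-\eps]$ and letting $\eps\to 0^+$, under the (standing) assumption that $\distr$, $\distr'$ and $\distr''$ extend continuously to $[a,b]$ (which is implicit in the hypothesis $\distr(a)\distr(b)\neq 0$ and is what is needed in the sequel, e.g. for the boundary terms in \eqref{asympt} to make sense). This continuous extension also supplies the uniform bound on $\distr''$ used above. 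No further analytic subtleties appear; the rest is bookkeeping, and the implied $O$-constants depend only on $a$, $b$, $\|\distr\|_\infty$, $\|\distr'\|_\infty$ and $\|\distr''\|_\infty$.
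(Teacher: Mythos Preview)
Your proposal is correct and follows essentially the same approach as the paper: Taylor expansion of $e^{ikx}$ for the small-$k$ asymptotic, and two integrations by parts for the large-$k$ asymptotic. In fact you are more careful than the paper about the endpoint issue, which the paper simply handles by asserting that $\distr''$ is continuous on the closed interval $[a,b]$.
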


\begin{proof} The first representation follows by expanding $e^{ikx}$ as
a power series around $k=0$, whereas the second is obtained using integration
by parts
\begin{align*}
\widehat\distr(k)=\int_a^b\distr(x)e^{ikx}\,dx 
=&\frac{1}{ik}\big(\distr(b)e^{ikb}-\distr(a)e^{ika}\big)
+\frac{1}{k^2}\big(\distr'(b)e^{ikb}
-\distr'(a)e^{ika}\big)\\
&-\frac{1}{k^2}\int_a^b\distr''(x)e^{ikx}\,dx
\end{align*}
together with the fact that
$\distr''(x)$ is a continuous function on  [a,b].
\end{proof}

\medskip

\begin{remark}\label{remrem1}
In the lower half-plane we have the estimate
$$
\widehat\distr(k)=O\left (\frac{e^{|\Im\,k|b}}{|k|}\right),
$$ 
which holds uniformly with respect to $\arg k$.
\end{remark}

\medskip

\begin{lemma}
\label{lem:l2}
$F_\distr\in L^2(\R)$.
\end{lemma}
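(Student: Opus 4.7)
The plan is to prove this via Plancherel's theorem. Let $F_N := \sum_{n=1}^N \distr^{*n}/n$; each $\distr^{*n}$ is continuous and compactly supported, so $F_N \in L^1\cap L^2(\R)$, and since $\widehat{\distr^{*n}} = \widehat\distr^{\,n}$ one has $\widehat{F_N}(k) = \sum_{n=1}^N \widehat\distr(k)^n/n$. For every real $k\ne 0$ the Fourier transform satisfies $|\widehat\distr(k)|<1$: the inequality $|\widehat\distr(k)|\le 1$ can be saturated only if $e^{ikx}$ has constant argument on the support of $\distr$, which is impossible for $k\ne 0$ since $\distr$ is continuous and positive near the endpoints of $[a,b]$. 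Hence the series converges pointwise on $\R\setminus\{0\}$ to
$$
G(k) := -\log(1-\widehat\distr(k)) = \sum_{n=1}^\infty \frac{\widehat\distr(k)^n}{n}.
$$
If I can show that $\widehat{F_N}\to G$ in $L^2(\R)$, then Plancherel makes $F_N$ Cauchy in $L^2$, forcing $F_N\to \tilde F$ in $L^2$ for some $\tilde F\in L^2(\R)$. A subsequence of $F_N$ converges a.e., and since $F_N(x)=F_\distr(x)$ for every $N\ge x/a$ (because $\distr^{*n}(x)=0$ for $n>x/a$), I can identify $\tilde F=F_\distr$ a.e., proving the lemma.

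To obtain the $L^2$ convergence, I would dominate: for each $k\ne 0$,
$$
\Bigl|G(k)-\widehat{F_N}(k)\Bigr| \le \sum_{n>N}\frac{|\widehat\distr(k)|^n}{n} \le -\log(1-|\widehat\distr(k)|) =: G_0(k),
$$
and the left side tends to zero. Dominated convergence (applied to $|G-\widehat{F_N}|^2$) will then yield $\widehat{F_N}\to G$ in $L^2$, provided $G_0\in L^2(\R)$. Verifying this membership is the crux. Near the origin, the first part of Lemma~\ref{lem:l1} combined with a short computation gives $|\widehat\distr(k)|^2 = 1 - (d_2-d_1^2)k^2 + O(k^3)$, where $d_2-d_1^2$ is the variance of $\distr$ and is strictly positive; consequently $1-|\widehat\distr(k)|\gg k^2$ and $G_0(k)=O(\log(1/|k|))$, which is locally $L^2$. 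For $|k|$ large, the second part of Lemma~\ref{lem:l1} gives $|\widehat\distr(k)|=O(1/|k|)$; using the elementary inequality $-\log(1-y)\le 2y$ for $0\le y\le 1/2$ one gets $G_0(k)=O(1/|k|)$, which is $L^2$ at infinity. On any compact subset of $\R\setminus\{0\}$, $|\widehat\distr|$ is continuous and strictly less than $1$, so $G_0$ is bounded there. Altogether $G_0\in L^2(\R)$.

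The main obstacle is precisely the analysis of $G_0$ near the origin: the linear term $id_1 k$ in the Taylor expansion of $\widehat\distr$ does not contribute to $1-|\widehat\distr|$ at leading order, so the quadratic lower bound on $1-|\widehat\distr(k)|$---and hence the merely logarithmic blow-up of $G_0$---rests entirely on the positivity of the variance $d_2-d_1^2$, which is guaranteed by the nondegeneracy of $\distr$ as a continuous density on $[a,b]$. Once $G_0\in L^2$ is established, the remaining steps---dominated convergence of $\widehat{F_N}$ to $G$, the Plancherel identity, and the pointwise identification of the $L^2$ limit with $F_\distr$---are routine.
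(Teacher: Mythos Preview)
Your argument is correct and follows the same overall strategy as the paper---reduce to showing that the Fourier-side series $\sum_n \widehat\distr^n/n$ converges in $L^2(\R)$, using $|\widehat\distr(k)|<1$ for $k\ne 0$, the variance inequality $d_2-d_1^2>0$ near the origin, and the $O(1/|k|)$ decay at infinity from Lemma~\ref{lem:l1}.

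The execution differs in one place. Near $k=0$ the paper estimates each term separately: from $|\widehat\distr(k)|\le\sqrt{1-C_1k^2}$ it applies Laplace's method to obtain $\|\widehat\distr^n\|_{L^2(-\delta,\delta)}\ll n^{-1/4}$, and then concludes Cauchy-ness of the partial sums via $\sum n^{-5/4}<\infty$. You instead exhibit a single dominating function $G_0=-\log(1-|\widehat\distr|)$, verify $G_0\in L^2$ directly (the logarithmic singularity at the origin being square-integrable), and invoke dominated convergence. Your route is somewhat more elementary---it avoids Laplace's method entirely---while the paper's termwise estimate yields quantitative decay of the individual $\widehat\distr^n$ that could be useful elsewhere. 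One cosmetic point: $\distr^{*1}=\distr$ need not be continuous at the endpoints $a,b$, but this is irrelevant since you only need $F_N\in L^1\cap L^2$, which follows from $\distr$ being bounded and compactly supported.
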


\begin{proof} 
It is enough to prove that the series \eqref{eq:series} converges in $L^2(\R)$.
In turn, since $\widehat\distr^n$ is the Fourier transform of
$\distr^{*n}$ for each $n\in\N$, it suffices to show that the series
\begin{equation}
\label{eq:series2}
\sum_{n=1}^\infty \frac{\widehat\distr(k)^n}{n}\qquad(k\in\R)
\end{equation}
converges in $L^2(\R)$.

\bigskip\textit{Step 1}.
First we note that
\begin{equation}
\label{eq:coffee}
|\widehat\distr(k)|=\bigg|\int_a^b\distr(x)e^{ikx}\,dx\bigg|
<\int_a^b\distr(x)\,dx=1\qquad(k\in\R\setminus\{0\})
\end{equation}
since $\distr$ is nonnegative and not identically zero, hence the series
\eqref{eq:series2} converges uniformly on every compact set
$\Omega\subset\R\setminus\{0\}$; this proves, in particular, that the series
\eqref{eq:series2} converges in $L^2(\Omega)$. 

\bigskip\textit{Step 2}.
By Lemma~\ref{lem:l1} it is easy to see that there exists $\delta>0$ such that
$$
|\widehat\distr(k)|\le\sqrt{1- C_1k^2}\qquad(k\in[-\delta,\delta])
$$
holds for some positive constant $C_1$ that is less than
$$
d_2-d_1^2=\int_a^b x^2\,\distr(x)\,dx -\bigg(\int_a^b x\,\distr(x)\,dx\bigg)^2>0.
$$
Using  Laplace's method (see, e.g., \cite[Ch. 3, Sec. 7]{Ol}), we find that
$$
\int_{-\delta}^\delta|\widehat\distr(k)|^n\,dk
\le\int_{-\delta}^\delta (1-C_1k^2)^{n/2}\,dk
=\left (\frac{2\pi}{C_1n}\right )^{1/2}+O(n^{-3/2});
$$
that is, for $\delta$ chosen as above, the inequality
$$
\int_{-\delta}^\delta |\widehat\distr(k)|^n\,dk\le \frac{C_2}{\sqrt{n}}\qquad(n\in\N)
$$
holds for some constant $C_2>0$. Hence,
\begin{equation}
\label{eq:est}
\|\widehat\distr^n\|_{L^2(-\delta, \delta)}\le  \frac{\sqrt{C_2}}{(2n)^{1/4}}.
\end{equation}
Now, for any natural numbers $M>N$, from \eqref{eq:est} we deduce that
\begin{align*}
\bigg\|\sum_{n=N}^M \frac{\widehat\distr^n}{n}\bigg\|_{L^2(-\delta,\delta)}
&\le \sum_{n=N}^M \frac1n \|\widehat\distr^n\|_{L^2(-\delta, \delta)}\le \frac{\sqrt{C_2}}{2^{1/4}}\sum_{n=N}^M\frac{1}{n^{5/4}},
\end{align*}
which shows that the series \eqref{eq:series2} converges in $L^2(-\delta,\delta)$.

\bigskip\textit{Step 3}.
By Lemma \ref{lem:l1} it is also clear 
that  there is a constant $A$ such that for every sufficiently
large $R>0$ the inequality
$$
|\widehat\distr(k)|\le \frac{A}{|k|}\qquad (|k|>R).
$$
holds. Increasing $R$ if necessary, we can assume
that $R>A$; then, for any natural numbers $M>N\ge 2$ we have
\begin{align*}
\bigg\|\sum_{n=N}^M \frac{\widehat\distr^n}{n}\bigg\|_{L^2(\R\setminus[-R,R])}
&\le\sum_{n=N}^M\frac{1}{n}\left (A^{2n}\int_{|k|>R}\frac{dk}{|k|^{2n}}\right )^{1/2}\\
&\le \sqrt{2R}\sum_{n=N}^M\frac{1}{n\sqrt{2n-1}} \left (\frac{A}{R}\right )^n
\\&\le \sqrt{2R}\sum_{n=N}^M\frac{1}{n\sqrt{2n-1}},
\end{align*}
which shows that the series \eqref{eq:series2} converges 
in $L^2(\R\setminus[-R,R])$.

\bigskip
 
Combining the results of the steps above, we conclude that the series
\eqref{eq:series2} converges in $L^2(\R)$ as required, and the lemma is proved.
\end{proof}

\begin{lemma}\label{lem:2.3} We have
\begin{equation} 
\label{eq:both}
F_\distr(x)-\distr(x)=-\frac{1}{2\pi}\int_\R\left (\log(1-\widehat\distr(k))+ \widehat\distr(k)\right )e^{-ikx}\,dk\qquad(x\in \R).
\end{equation}
\end{lemma}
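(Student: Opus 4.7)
The idea is that, formally, taking the Fourier transform of the defining series~\eqref{eq:series} term by term gives
$$\widehat{F_\distr}(k)=\sum_{n=1}^\infty \frac{\widehat\distr(k)^n}{n}=-\log\bigl(1-\widehat\distr(k)\bigr),$$
so that $\widehat{F_\distr-\distr}(k)=-(\log(1-\widehat\distr(k))+\widehat\distr(k))$, and Fourier inversion then produces~\eqref{eq:both}. The plan is to make this precise in two stages: first identify $\widehat{F_\distr}$ almost everywhere, then verify integrability and invert.

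To carry out the first stage, I would record the stronger statement implicit in the proof of Lemma~\ref{lem:l2}: the partial sums $S_N:=\sum_{n=1}^N \distr^{*n}/n$ converge to $F_\distr$ both pointwise (since only finitely many terms of~\eqref{eq:series} are nonzero at each $x$) and in $L^2(\R)$ (by Plancherel, the $L^2$ convergence of $\widehat{S_N}=\sum_{n=1}^N \widehat\distr^n/n$ shown in Steps~1--3 of that proof is equivalent to $L^2$ convergence of $S_N$). Applying Plancherel again, $\widehat{S_N}\to \widehat{F_\distr}$ in $L^2(\R)$. On the other hand, for every $k\in\R\setminus\{0\}$, the bound $|\widehat\distr(k)|<1$ from~\eqref{eq:coffee} shows that $\widehat{S_N}(k)\to -\log(1-\widehat\distr(k))$ pointwise (principal branch). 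Since any $L^2$-convergent sequence has a subsequence converging a.e., the two limits agree a.e., so
$$\widehat{F_\distr-\distr}(k)=-g(k)\quad\text{where}\quad g(k):=\log\bigl(1-\widehat\distr(k)\bigr)+\widehat\distr(k).$$

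For the second stage I would verify that $g\in L^1(\R)$. At infinity, the second estimate of Lemma~\ref{lem:l1} gives $\widehat\distr(k)=O(1/|k|)$, and the expansion $\log(1-z)=-z-z^2/2-\cdots$ yields $g(k)=O(1/|k|^2)$, which is integrable. Near $k=0$, the first estimate of Lemma~\ref{lem:l1} gives $1-\widehat\distr(k)=-id_1 k+O(k^2)$, so $\log(1-\widehat\distr(k))=\log|k|+O(1)$ is locally $L^1$, and $\widehat\distr$ is bounded; hence $g\in L^1(\R)$. Applying the standard Fourier inversion formula to $\widehat{F_\distr-\distr}=-g$ produces~\eqref{eq:both}; the a.e.\ identity extends to every $x\in\R$ because both sides are continuous (the right-hand side as the Fourier transform of an $L^1$ function, and the left-hand side because $F_\distr-\distr=\sum_{n\ge 2}\distr^{*n}/n$ is a locally finite sum of continuous functions).

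The main obstacle is the interchange of Fourier transform with the infinite sum in the first stage: because the series~\eqref{eq:series} is only pointwise (not absolutely) summable, a naive swap is not available. The substitute is the $L^2$-convergence delivered by Lemma~\ref{lem:l2} combined with the pointwise bound $|\widehat\distr(k)|<1$ for $k\ne 0$ from~\eqref{eq:coffee}, which together identify $\widehat{F_\distr}$ with $-\log(1-\widehat\distr(k))$ almost everywhere.
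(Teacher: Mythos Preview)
Your proposal is correct and follows essentially the same route as the paper: identify $\widehat{F_\distr}$ with $-\log(1-\widehat\distr)$ via the $L^2$ convergence established in Lemma~\ref{lem:l2} and the pointwise bound~\eqref{eq:coffee}, then observe that $\log(1-\widehat\distr)+\widehat\distr\in L^1(\R)$ so that Fourier inversion gives~\eqref{eq:both} a.e., and finally upgrade to all $x$ by continuity. If anything, you are slightly more explicit than the paper in two spots---the subsequence argument pinning down $\widehat{F_\distr}$ a.e., and the verification that $g\in L^1$ near $0$ and at infinity---but the structure is the same.
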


\begin{proof}
From the bound \eqref{eq:coffee} we see that
$$
\widehat F_\distr(k)=\sum_{n=1}^\infty \frac{\widehat\distr(k)^n}{n}
=-\log(1-\widehat\distr(k))\qquad (k\in\R\setminus\{0\}).
$$ 
Since  $F_\distr\in L^2(\R)$ by Lemma \ref{lem:l2}, one obtains that
$$
F_\distr(x)=-\frac{1}{2\pi}\,\text{\rm l.i.m.} \int_\R
\log(1-\widehat\distr(k))e^{-ikx}~dk\qquad(\text{\rm a.e.~}x\in\R).
$$
Therefore,
\begin{equation}
\label{eq:bott}
F_\distr(x) -\distr(x)=-\frac{1}{2\pi}\,\text{\rm l.i.m.} \int_\R
\left (\log(1-\widehat\distr(k)) + \widehat\distr(k)\right )e^{-ikx}~dk
\end{equation}
for almost all $x\in \R$.

However, since the integrand
$\log(1-\widehat\distr\,)+\widehat\distr$
is absolutely summable by Lemma~\ref{lem:l1},
we see that \eqref{eq:bott} implies \eqref{eq:both}
since the functions on either side of \eqref{eq:both} are continuous, and 
every $L^2$-function (i.e., equivalence class of functions) has at most one
continuous representative.
\end{proof}

\medskip

\begin{proof}[Proof of Theorem~\ref{thm:c-main}]
Since $\distr(x)=0$ for $x>b$, from Lemma \ref{lem:2.3} it follows that 
\begin{align*}
xF_\distr(x)=&-\frac{x}{2\pi}\int_\R\left (\log(1-\widehat\distr(k))+\widehat\distr(k)\right )e^{-ikx}\,dk \qquad (x>b).
\end{align*}
Recall that $\log (1-\widehat \distr)+\widehat \distr\in L^1(\R)$, and therefore
\begin{align*}
\int_\R\left (\log(1-\widehat\distr(k))+\widehat\distr(k)\right )e^{-ikx}\,dk&
=\lim_{\varepsilon \to 0^+}\int_{\R\setminus(-\varepsilon, \varepsilon)}\left (\log(1-\widehat\distr(k))+\widehat\distr(k)\right )e^{-ikx}\,dk\\
&=\text{V.P.}\int_{\R}\left (\log(1-\widehat\distr(k))+\widehat\distr(k)\right )e^{-ikx}\,dk.
\end{align*}
Integration by parts yields the relation
\begin{equation}
\label{eq:chair}
xF_\distr(x)=\frac{1}{2}+\frac{1}{2\pi i}\,\text{V.P.}
\int_\R\left (\frac{\widehat\distr'(k)}{1-\widehat\distr(k)}-\widehat\distr'(k)\right )\,e^{-ikx}\,dk
\qquad(x>b)
\end{equation}
as Lemma~\ref{lem:l1} implies that
\begin{align*} 
&\lim_{\eps\to 0^+}
\(\[\log(1-\widehat\distr(-\eps)) +\widehat\distr(-\eps)\]e^{i\eps x}-
\[\log(1-\widehat\distr(\eps))+\widehat\distr(\eps)\]e^{-i\eps x}\)\\
&=\lim_{\eps\to 0^+}
\(\log(1-\widehat\distr(-\eps)) e^{i\eps x}-
\log(1-\widehat\distr(\eps))e^{-i\eps x}\)\\
&=\lim_{\eps\to 0^+}\left (\log(id_1\varepsilon)-\log(-id_1\varepsilon)\right )
=\pi i,
\end{align*}
with
$$
d_1=\int_a^b x\,\distr(x)\,dx >0,
$$
the mean of the distribution $\distr$.

Next, concerning the V.P.\ integral in \eqref{eq:chair} we have
\begin{align*}
\text{V.P.}\int_\R\left (\frac{\widehat\distr'(k)}{1-\widehat\distr(k)}-\widehat\distr'(k)\right )\,e^{-ikx}\,dk
:&=\lim_{\eps\to 0^+}\int_{\R\setminus(-\eps,\eps)}
\left (\frac{\widehat\distr'(k)}{1-\widehat\distr(k)}-\widehat\distr'(k) \right )\,e^{-ikx}\,dk\\
&=\lim_{\eps\to 0^+}(\cI_\eps-\cI'_\eps),
\end{align*}
where
\begin{align*}
\cI'_\eps&:=\int_{\Gamma'_\eps}
\left (\frac{\widehat\distr'(z)}{1-\widehat\distr(z)}-\widehat\distr'(z)\right )\,e^{-izx}\,dz\qquad\text{with}\quad
\Gamma'_\eps:=\{z\in\C:|z|=\eps,~\Im\,z<0\},\\
\cI_\eps&:=\int_{\Gamma_\eps}
\left (\frac{\widehat\distr'(z)}{1-\widehat\distr(z)}-\widehat\distr'(z)\right )\,e^{-izx}\,dz\qquad\text{with}\quad
\Gamma_\eps:=(-\infty,-\eps)\cup \Gamma'_\eps\cup (\eps, \infty).
\end{align*}
Here, $\Gamma'_\eps$ and $\Gamma_\eps$ are oriented so that $\Re\,z$
is increasing on each contour.  Using Lemma~\ref{lem:l1} again, it
is easy to see that
$$
\frac{1}{2\pi i}\lim_{\eps\to 0^+}\cI'_\eps=\frac12\,
\text{\rm Res}\bigg|_{z=0}\left (\frac{\widehat\distr'(z)}{1-\widehat\distr(z)}-\widehat\distr'(z)\right ) e^{-izx}=-\frac12.
$$
Using this information in \eqref{eq:chair} and applying the Residue Theorem,
we have
\begin{align*}
xF_\distr(x)&=\frac12+\frac12+\frac{1}{2\pi i}\lim_{\eps\to 0^+}\cI_\eps\\
&=1+\frac{1}{2\pi i}\int_{\Im\,k=-c}
\left (\frac{\widehat\distr'(k)}{1-\widehat\distr(k)}-
\widehat \distr'(k)\right )e^{-ikx}\,dk\\
&\qquad -\sum_k\text{\rm Res}\bigg|_{z=k}
\bigg(\frac{\widehat\distr'(z) }{1-\widehat\distr(z)}
-\widehat\distr'(z)\bigg )e^{-izx}+E_1+E_2,
\end{align*}
where the integral over $\{k\in\C:\Im\,k=-c\}$ is oriented
with $\Re\,k$ increasing,
the sum is taken over all roots $k$ of the equation
$\widehat\distr(k)=1$ for which $k\in\Pi_c$, 
and
\begin{align*}
E_1&:=-\frac{1}{2\pi}\lim_{R\to \infty}
\int_0^c\left (\frac{\widehat\distr'(-R-iu)}{1-\widehat\distr(-R-iu)} -\widehat \distr'(-R-iu)\right )\,e^{-ux+iR}\,du,\\
E_2&:=\frac{1}{2\pi}\lim_{R\to \infty}
\int_{-c}^0\left (\frac{\widehat\distr'(R+iu)}{1-\widehat\distr(R+iu)}
-\widehat \distr'(R-iu)\right )\,e^{ux-iR}\,du.
\end{align*}
Denoting by $m(k)$ the multiplicity of each root $k$ in the sum, 
we have
$$
\text{\rm Res}\bigg|_{z=k}\bigg(\frac{\widehat\distr'(z)}{1-\widehat\distr(z)}-\widehat \distr'(z)
\bigg)e^{-izx}
=-m(k)e^{-ikx}.
$$
Taking into account that $E_1=E_2=0$ by Remark~\ref{remrem1}, we
finish the proof.
\end{proof}

\begin{corollary}
\label{cor:notsum}
$F_\distr\in \left ( L^1_{\text{\rm w}}(\R_+)\setminus  L^1(\R_+)\right )\cap L^2(\R_+).$
\end{corollary}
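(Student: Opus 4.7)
The assertion decomposes into three separate claims: $F_\distr\in L^2(\R_+)$, $F_\distr\notin L^1(\R_+)$, and $F_\distr\in L^1_{\text{\rm w}}(\R_+)$. The first is immediate from Lemma~\ref{lem:l2} combined with the observation that $\distr^{*n}$ is supported in $[na,nb]\subset[a,\infty)$, so that $F_\distr(x)=0$ for $x<a$ and $\|F_\distr\|_{L^2(\R_+)}=\|F_\distr\|_{L^2(\R)}<\infty$.

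For the remaining two claims, the plan is to first extract from Theorem~\ref{thm:c-main}, applied with any fixed $c>0$ for which $f$ does not vanish on $\{\Im k=-c\}$, the scalar asymptotic $xF_\distr(x)\to 1$ as $x\to+\infty$. Indeed, each of the finitely many terms $m(k)e^{-ikx}$ with $k\in\Pi_c$ satisfies $|e^{-ikx}|=e^{(\Im k)x}$, which decays exponentially since $\Im k<0$, while the remarks following Theorem~\ref{thm:c-main} bound $|E(c,x)e^{-cx}|\le\tfrac{1}{2\pi}\|f_c\|_{L^1(\R)}e^{-cx}\to 0$. Non-integrability then follows from the non-negativity of $F_\distr$ (a sum of convolutions of $\distr$ with positive coefficients): choose $X_0$ so that $F_\distr(x)\ge\tfrac{1}{2x}$ for $x\ge X_0$; integration on $[X_0,\infty)$ yields $\int_{\R_+}F_\distr(x)\,dx=+\infty$.

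For the weak $L^1$ inclusion, again using $F_\distr\ge 0$, it suffices to produce a constant $C$ with $xF_\distr(x)\le C$ for every $x>0$, for then $\{x>0:F_\distr(x)>t\}\subseteq(0,C/t)$ and consequently $\sup_{t>0}t\,|\{F_\distr>t\}|\le C$. Such a bound holds on $[X_0,\infty)$ by the asymptotic just derived and on $(0,a)$ since $F_\distr\equiv 0$ there. On the compact interval $[a,X_0]$ only finitely many indices $n$ contribute to $F_\distr(x)$, and Young's convolution inequality together with $\|\distr\|_{L^1(\R)}=1$ gives $\|\distr^{*n}\|_{L^\infty(\R)}\le\|\distr\|_{L^\infty(\R)}$ uniformly in $n$; hence $F_\distr$ is bounded on $[a,X_0]$, and multiplying by $x\le X_0$ completes the uniform bound on $xF_\distr$.

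The only mild obstacle is the uniform control of $xF_\distr(x)$ on the transition interval $[a,X_0]$, where neither the asymptotic of Theorem~\ref{thm:c-main} nor the non-negativity of $F_\distr$ is directly decisive; the elementary Young estimate $\|\distr^{*n}\|_{L^\infty(\R)}\le\|\distr\|_{L^\infty(\R)}$ resolves this uniformly in $n$ and thereby completes the proof.
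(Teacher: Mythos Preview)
Your proof is correct. The $L^2$ and $L^1_{\text{w}}$ parts match the paper's argument essentially verbatim (the paper simply asserts that $F_\distr$ is bounded and $F_\distr(x)=O(x^{-1})$ from \eqref{asympt}, whereas you spell out the boundedness on $[a,X_0]$ via Young's inequality, which is a welcome clarification).

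The one genuine difference is in the $\notin L^1$ claim. You deduce it from the asymptotic $xF_\distr(x)\to 1$ of Theorem~\ref{thm:c-main}, giving $F_\distr(x)\ge 1/(2x)$ for large $x$. The paper instead argues directly from the definition: since $\int_0^\infty \distr^{*n}(x)\,dx=1$ for every $n$ and all terms are nonnegative, monotone convergence gives $\int_0^\infty F_\distr=\sum_{n\ge 1}1/n=+\infty$. The paper's route is more elementary in that it does not invoke Theorem~\ref{thm:c-main} at all; your route has the minor aesthetic advantage that the same asymptotic serves both the $\notin L^1$ and the $L^1_{\text{w}}$ conclusions, so nothing new needs to be introduced.
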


\begin{proof}
The fact that $F_\distr\notin L^1(\R_+)$ follows from the definition \eqref{eq:series} and the observations that 
$$
\int_0^\infty\distr^{*n}(x)dx=1
$$
and that the series $\sum_{n=1}^\infty\frac1n$ diverges. 

The membership $F_\distr\in L^2(\R_+)$ is the content of
Lemma~\ref{lem:l2}.

Finally, $F_\distr$ belongs to the weak space $L^1_{\text{w}}(\R_+)$ since
$F_\distr$ is a bounded function and it admits the estimate
$$
F_\distr(x)=O(x^{-1})\qquad (x\to\infty)
$$
(this follows from the asymptotics \eqref{asympt}), hence
$$
\sup_{t>0}\, t \cdot \text{mes}\{x>0:|F(x)|>t\}<\infty.
$$
This completes the proof.
\end{proof}

\section{On solutions to the equation $\widehat\distr(k)=1$}
\label{sec:solns}

In this section we briefly describe the location of zeros of the function
$\widehat\distr(k)-1$ that lie in the lower half-plane.  Our results here concerning the distribution of the zeros are
not used in the proof of Theorem~\ref{thm:b-main} in \S\ref{sec:proofb}
below. However, as Theorem~\ref{thm:b-main} shows, these zeros suitably translated and rescaled  are zeros of the $L$-function.

In view of the remark following Lemma~\ref{lem:l1} we see that the
aforementioned zeros lie asymptotically close to solutions of the equation
\begin{equation}
\label{eq:basic}
e^{ikb}=\frac{ik}{\distr(b)}\qquad(\Im\,k<0).
\end{equation}
The solutions to \eqref{eq:basic} can be determined explicitly in terms
of the Lambert \text{$W$-function} or estimated using
standard methods going back to Horn \cite{Horn1,Horn2}
(see also Hardy \cite{Hardy},  Zdanovich \cite{Zd},
Pavlov~\cite{PP73,P73}, and Zworski~\cite{Zw87}).

\begin{proposition}
\label{prop:zeros}
{\rm (cf.\ \cite[Lemma 2]{DMV})}
The zeros of the equation $\widehat\distr(k)=1$ satisfy the asymptotic formula
$$
k_{\pm n}=\pm\frac{\pi}{b}(2n+\tfrac12)-\frac{i}{b}\log \frac{2\pi n}{b\,\distr(b)}
+o(1)\qquad(n\in\N,~n\to\infty).
$$
\end{proposition}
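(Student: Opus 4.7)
The plan is to compare the equation $\widehat\distr(k)=1$ with its principal-part model in the lower half-plane. From Lemma~\ref{lem:l1} we have, for $|k|$ large,
$$
\widehat\distr(k)=\frac{\distr(b)e^{ikb}-\distr(a)e^{ika}}{ik}+O(|k|^{-2}),
$$
and since $b>a>0$, in the lower half-plane $|e^{ikb}|=e^{b|\Im\,k|}$ dominates $|e^{ika}|=e^{a|\Im\,k|}$. Anticipating that the zeros satisfy $|\Im\,k|=O(\log|k|)$, the discrepancy $|e^{ika}/e^{ikb}|=e^{-(b-a)|\Im\,k|}$ decays polynomially in $|k|$. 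So the zeros of $\widehat\distr(k)-1$ in the lower half-plane ought to cluster around the solutions of the truncated model \eqref{eq:basic}, i.e.\ $\distr(b)e^{ikb}=ik$.

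First I would solve the model equation explicitly. Taking the principal branch of the logarithm, \eqref{eq:basic} is equivalent to
$$
ikb=\log(ik)-\log\distr(b)+2\pi i n,\qquad n\in\Z.
$$
For $n$ large and positive, the zeroth-order approximation $k\approx 2\pi n/b$ places $ik$ just above the positive real axis, so $\arg(ik)=\tfrac{\pi}{2}+o(1)$ and $\log|ik|=\log(2\pi n/b)+o(1)$. One round of back-substitution produces the candidate
$$
k_n^{(0)}:=\frac{\pi}{b}(2n+\tfrac12)-\frac{i}{b}\log\frac{2\pi n}{b\,\distr(b)}+o(1).
$$
The companion branch $n\to-\infty$ follows either by the same calculation or from the symmetry $\widehat\distr(-\overline k)=\overline{\widehat\distr(k)}$, which pairs zeros as $k\leftrightarrow-\overline k$ and yields the sign $-\pi(2n+\tfrac12)/b$ in the real part while preserving the imaginary part.

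Next I would transfer these asymptotics to the true equation via Rouch\'e's theorem. Set $g(k):=\distr(b)e^{ikb}-ik$ and $h(k):=ik\bigl(\widehat\distr(k)-1\bigr)$, so the expansion above gives
$$
h(k)-g(k)=-\distr(a)e^{ika}+O(|k|^{-1}).
$$
If $\tilde k_n$ denotes the exact root of $g$ corresponding to $k_n^{(0)}$, then $g'(\tilde k_n)=-b\tilde k_n-i$ has magnitude of order $n$, while on a fixed-radius circle around $\tilde k_n$ one computes $|h-g|=O(|k|^{a/b})=o(|k|)$ since $a<b$. Hence $|g|>|h-g|$ on such a circle, and Rouch\'e places exactly one zero of $h$, and therefore of $\widehat\distr-1$, within $o(1)$ of $k_n^{(0)}$.

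The remaining task, and the main obstacle, is to rule out spurious zeros: one must show that every zero of $\widehat\distr(k)-1$ in the lower half-plane with $|\Re\,k|$ sufficiently large is accounted for by some $k_n^{(0)}$. For this I would apply the argument principle to $h$ on tall rectangular contours with vertical sides at $\Re\,k=2\pi n/b$ and horizontal sides at $\Im\,k=0^-$ and $\Im\,k=-C\log|k|$ for a suitable $C$, comparing the winding of $h$ with that of $g$. The delicate point is a uniform lower bound of the form $|h(k)|\gtrsim|k|^{1-\eps}$ along the vertical sides of these rectangles; this requires exploiting the explicit dominant term in Lemma~\ref{lem:l1} to preclude destructive interference between $\distr(a)e^{ika}$ and the $O(|k|^{-2})$ remainder that might otherwise create extra zeros between consecutive $k_n^{(0)}$.
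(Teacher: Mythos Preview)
Your approach coincides with the paper's: both reduce to the model equation \eqref{eq:basic}, solve it by one round of logarithmic back-substitution (the paper phrases this as a Banach fixed-point iteration after the change of variable $z=ikb$), and invoke the symmetry $k\leftrightarrow -\overline{k}$ for the left half. The paper, however, explicitly presents only a \emph{heuristic} argument and defers the rigorous transfer from the model to the true equation to the cited literature (Horn, Hardy, Zhdanovich, Pavlov, Zworski), whereas you outline the Rouch\'e/argument-principle machinery that would actually accomplish this; in that sense your proposal goes further than the paper does.

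One small correction: the remainder $O(|k|^{-2})$ in Lemma~\ref{lem:l1} is stated for real $k$, and in the lower half-plane the next term in the integration-by-parts expansion carries a factor $e^{b|\Im k|}$. Near the model zeros one has $e^{b|\Im k|}\asymp |k|$, so after multiplying by $ik$ the error in $h-g$ is $O(1)$, not $O(|k|^{-1})$. This is harmless for your Rouch\'e step since $|h-g|=O(|k|^{a/b})+O(1)=o(|k|)$ still holds, but the stated bound should be adjusted.
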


A heuristic argument proceeds as follows.
To find solutions to \eqref{eq:basic} we introduce a new variable 
$z=ikb$ and rewrite \eqref{eq:basic} in the form
$$
e^z=\alpha z\qquad\text{with}\quad\alpha:=(b\,\distr(b))^{-1}.
$$
We prepare this equation for ``bootstrapping'' by writing it in the form
$$
z=\log(\alpha z)+2\pi in
$$
with a fixed $n\in\N$.  We apply the Banach fixed point theorem,
starting the iterative process with
\begin{align*}
z^{(0)}&:=2 \pi i n,\\
z^{(1)}&:=\log(\alpha z^{(0)})+2\pi in=\log(2\pi n\alpha)+\pi i(2n+\tfrac12),
\end{align*}
and continuing in this way by putting
$$
z^{(j+1)}_n:=\log(\alpha z^{(j)})+2\pi in\qquad(j\ge 2).
$$
If $n$ and $j$ are large we see that 
$$
z^{(j)}_n=\log(2\pi n\alpha)+\pi i(2n+\tfrac12)+(\text{lower order terms}).
$$
Returning to the original variable $k$ we conclude that the zeros
$k_n$ with $\Re\,k_n>0$ and $\Im\,k_n<0$ satisfy
$$
k_n\sim\frac{\pi}{b}(2n+\tfrac{1}{2})-\frac{i}{b}\log(2\pi n\alpha)+o(1)
\qquad(n\to\infty).
$$
The heuristic argument is completed by noting that the zeros of
$\widehat\distr(\zeta)-1$ are located symmetrically with respect to the imaginary axis.

\section{Proof of Theorem~\ref{thm:b-main}}
\label{sec:proofb}

\subsection{Some technical lemmas}

For the proof of Theorem~\ref{thm:b-main} we need several technical results.

For any Dirichlet character $\chi$ we denote by
$N(\sigma,T,\chi)$  the number of zeros of $L(s,\chi)$ in
the region $\{s\in\C:\sigma\le\Re\,s\le 1,~|\Im\,s|\le T\}$,
counted with multiplicity.  The following ``log-free'' zero-density
estimate is due to Linnik~\cite{Linnik1}.

\begin{lemma}
\label{lem:fortune}
There is an effectively computable constant $c_1>0$ such that the bound
$$
\sum_{\chi\,(\text{\rm mod}~q)}N(\sigma,T,\chi)\ll (qT)^{c_1(1-\sigma)}
$$
holds uniformly for $q\ge 1$, $\sigma\in[0,1]$ and $T\ge 1$.
\end{lemma}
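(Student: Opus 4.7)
The plan is to follow Linnik's strategy, which combines a mollified zero-detecting identity, the large sieve inequality for Dirichlet characters, and a careful parameter optimization that avoids the logarithmic loss of naive arguments.

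First I would introduce the Dirichlet mollifier $M_Y(s,\chi) := \sum_{n \le Y}\mu(n)\chi(n)n^{-s}$, a partial sum for $1/L(s,\chi)$. Multiplying out gives $L(s,\chi)M_Y(s,\chi) = 1 + R_Y(s,\chi)$ with $R_Y(s,\chi) = \sum_{n > Y} b_n\chi(n)n^{-s}$, where $b_n := \sum_{d \mid n,\, d \le Y}\mu(d)$. At a zero $\varrho = \beta+i\gamma$ of $L(s,\chi)$ one has $R_Y(\varrho,\chi) = -1$. Convolving this identity with a rapidly decaying smooth Mellin kernel to truncate the tail at a scale $Z$, and then using a dyadic decomposition, I obtain a fixed smooth bump $\phi$ supported in $[1,2]$ and some scale $X = X(\varrho) \in [Y, Z]$ for which
$$
\bigg|\sum_n b_n\chi(n)n^{-\varrho}\phi(n/X)\bigg| \gg 1.
$$
The point of the smooth, rapidly decaying kernel is that only $O(1)$ dyadic scales contribute nontrivially, which is what prevents a $\log(qT)$ loss at this stage.

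Next I would extract, for each character, a $1$-separated subset of the ordinates $\gamma$ among the zeros with $\beta \ge \sigma$ and $|\gamma|\le T$ (losing at most a $(qT)^{o(1)}$ factor by a standard zero-counting bound), and apply the large sieve
$$
\sum_{\chi\,(\mathrm{mod}\,q)}\sum_r\bigg|\sum_{n\le N}a_n\chi(n)n^{-i\gamma_r}\bigg|^2 \ll (qT+N)\sum_{n\le N}|a_n|^2
$$
with $a_n := b_n\phi(n/X)n^{-\sigma}$ and $N\asymp X$. Using $|b_n|\le d(n)$ and $\beta \ge \sigma$ gives an intermediate bound of the shape
$$
\sum_{\chi\,(\mathrm{mod}\,q)}N(\sigma,T,\chi) \ll (qT+X)\,X^{1-2\sigma}\,(qT)^{o(1)},
$$
and optimizing over the $O(1)$ admissible values of $X$ (with $Y$ and $Z$ chosen as suitable powers of $qT$) yields the claimed bound $(qT)^{c_1(1-\sigma)}$ for $\sigma \ge 1/2$. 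The range $\sigma < 1/2$ is then handled by a reflection via the functional equation, or trivially by absorbing it into a larger constant.

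The principal difficulty is exactly the elimination of the logarithm. A direct Perron-style argument with sharp cutoffs produces one $\log(qT)$ from truncation and another from the dyadic sum over scales, and it is easy to arrive at a bound of the form $\log(qT)\cdot(qT)^{c(1-\sigma)}$ rather than the log-free one. Removing the logarithm demands either the smooth-kernel device sketched above, which concentrates the zero-detecting sum at $O(1)$ scales, or Linnik's original power-sum/Tur\'an approach applied to a carefully constructed auxiliary polynomial. I would use the smooth-kernel variant, as it interfaces most cleanly with the large sieve and makes the exponent tracking transparent. The other technical nuisance to address is that $b_n$ is not supported on a single dyadic block but only decays like $d(n)$, so one must bound $\sum_{n\asymp X} d(n)^2 n^{-2\sigma}$ by $X^{1-2\sigma}(qT)^{o(1)}$; this is routine once the smoothing is in place.
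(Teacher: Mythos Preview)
The paper does not prove this lemma; it is stated as a known result, attributed to Linnik, and cited without argument. Your proposal therefore goes well beyond what the paper itself does.

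Your outline follows the modern mollifier/large-sieve route (essentially Gallagher's reformulation of Linnik's method) and you correctly identify the central difficulty as the elimination of the stray $\log(qT)$. However, your sketch reintroduces exactly that loss at the well-spacing step. When you pass to a $1$-separated set of ordinates and say you lose ``at most a $(qT)^{o(1)}$ factor by a standard zero-counting bound,'' that factor is really $\log(qT)$. For $\sigma$ bounded away from $1$ this is harmless and can be absorbed into $c_1$, but the whole content of the \emph{log-free} estimate lies in the regime $\sigma\to 1^-$, where the target $(qT)^{c_1(1-\sigma)}$ is $O(1)$ and cannot swallow any unbounded factor. The same remark applies to the $(qT)^{o(1)}$ coming from $\sum_{n\asymp X} d(n)^2 n^{-2\sigma}$. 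As written, your argument yields only
$$
\sum_{\chi\,(\mathrm{mod}\,q)}N(\sigma,T,\chi)\ll (qT)^{c_1(1-\sigma)+\eps}
$$
for every $\eps>0$, which is strictly weaker near $\sigma=1$. The genuinely log-free proofs (Linnik via Tur\'an power sums; Gallagher, Jutila, Graham via a more careful zero-detector) avoid the well-spacing reduction entirely: one builds the detecting inequality so that clustered zeros are controlled by the same polynomial, rather than discarded and recovered by a crude count. Your smooth-kernel device handles the dyadic-scale logarithm but not this one; to close the gap you would need to replace the $1$-separation step by such a mechanism.
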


For our proof of Theorem~\ref{thm:b-main}, putative Siegel zeros have an impact,
and exceptional moduli must be taken into account; see Davenport~\cite[\S14]{Daven}
for a general background on exceptional moduli.  For the purposes of this paper,
we need only the following specialized result, which is
a quantitative version of the Deuring-Heilbronn phenomenon
(see Linnik~\cite{Linnik2}); for a more
general statement, we refer the reader to Davenport~\cite[\S\S13--14]{Daven}
and Knapowski~\cite{Knap} (see also Gallagher~\cite{Gallag}).

\begin{lemma}
\label{lem:exceptional}
There exist positive constants $c_2,c_3$ with the following property.
Let $\chi$ be a primitive Dirichlet character modulo $q$, where $q>1$.
Then $L(s,\chi)$ has at most one zero $\varrho=\beta+i\gamma$ such that
$$
\beta>1-\frac{c_2}{\log q}\mand |\gamma|\le q.
$$
If there is such an exception, then the exceptional zero is real,
simple and unique.  Moreover, denoting by $\beta_1$ the exceptional zero,
we have $L(s,\chi)\ne 0$ if $s=\sigma+it\ne\beta_1$ satisfies
$$
\sigma>1-\frac{c_3}{\log q}\,\log\bigg(\frac{ec_2}{(1-\beta_1)\log q}\bigg)
\mand |t|\le q.
$$
\end{lemma}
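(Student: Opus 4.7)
The plan is to treat this as a classical result in the literature and outline the standard proof strategy, since the statement is quoted from Linnik, Davenport, Knapowski, and Gallagher. The starting point is the non-negative trigonometric identity $3+4\cos\theta+\cos 2\theta\ge 0$. Applied to $-\Re\bigl(\log L(s,\chi_0)+4\log L(s,\chi)+\log L(s,\chi^2)\bigr)$, where $\chi_0$ is the principal character modulo $q$, this yields the classical De~la~Vall\'ee Poussin style inequality $3\,\Re\tfrac{\zeta'}{\zeta}(\sigma)+4\,\Re\tfrac{L'}{L}(\sigma+it,\chi)+\Re\tfrac{L'}{L}(\sigma+2it,\chi^2)\le 0$, from which one derives a zero-free region of width $\gg 1/\log(q(|t|+2))$ except possibly when $\chi$ is real and $\chi^2=\chi_0$, in which case $L(\sigma,\chi^2)$ has a pole at $s=1$ that can absorb a single real zero. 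This gives the first assertion with the exceptional zero $\beta_1$ being real and simple; uniqueness follows from a Jensen/Hadamard argument on the entire function $(s-\beta_1)L(s,\chi)$.

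For the Deuring-Heilbronn part, the plan is to use the exceptional zero as a ``repeller.'' The key input is that if $\beta_1$ is close to $1$, then Siegel's observation gives $L'/L(s,\chi)$ a term $1/(s-\beta_1)$ that enlarges the effective size of $|L'/L|$. The argument is to apply the same trigonometric inequality, but now with a sharper lower bound that incorporates the contribution of $\beta_1$: one uses that for any other putative zero $\varrho=\beta+i\gamma$ of $L(s,\chi)$ in the strip $|\gamma|\le q$,
\begin{equation*}
\Re\frac{1}{s-\varrho}\ge 0\qquad\text{when }\Re s>\beta,
\end{equation*}
together with the explicit formula for $L'/L$ in terms of its zeros (via the Hadamard product). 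Choosing $\sigma=1+\eta$ with $\eta$ of order $(1-\beta_1)$ and optimizing in $\eta$ produces the logarithmic amplification factor $\log\bigl(ec_2/((1-\beta_1)\log q)\bigr)$ in the improved zero-free region.

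The execution proceeds in the following order. First I would record the Hadamard factorization of $L(s,\chi)$ and the partial-fraction expansion of $L'/L$, both uniform for $|\Im s|\le q$. Next, I would combine the positivity of $\Re\bigl(3\tfrac{\zeta'}{\zeta}+4\tfrac{L'}{L}(\cdot,\chi)+\tfrac{L'}{L}(\cdot,\chi^2)\bigr)$ with the dominant contribution of the pole at $s=1$, using a suitable $\sigma$ slightly larger than $1$. Then, for the second part, I would subtract off the contribution of the exceptional term $1/(\sigma-\beta_1)$, treat it as a negative contribution on the right-hand side of the inequality, and re-optimize in $\sigma$ to deduce the amplified lower bound on $1-\beta$ for any other zero $\beta+i\gamma$.

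The hard part, as always with quantitative Deuring-Heilbronn results, is controlling the constants $c_2,c_3$ uniformly and ensuring the logarithmic amplification factor has the clean form stated. A delicate point is that the amplification relies on $(1-\beta_1)\log q$ being small; when it is not, the original zero-free region already suffices, and one must verify that the statement is vacuous or trivially implied in that range. I expect no conceptual surprises, but the bookkeeping — in particular, tracking the dependence on $t$ over the full range $|t|\le q$ and checking that the truncated Hadamard sum contributes an acceptable error — is the main technical burden. For the present paper, one can simply cite the version in Davenport~\cite[\S\S13--14]{Daven} or Knapowski~\cite{Knap}, whose formulation matches the one stated.
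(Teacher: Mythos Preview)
Your proposal is correct and aligns with the paper's treatment: the paper does not prove this lemma at all but simply states it as a known quantitative form of the Deuring--Heilbronn phenomenon, citing Linnik~\cite{Linnik2}, Davenport~\cite[\S\S13--14]{Daven}, Knapowski~\cite{Knap}, and Gallagher~\cite{Gallag}. Your outline of the classical de~la~Vall\'ee~Poussin/Hadamard argument is accurate and goes beyond what the paper provides, but since the paper merely cites the result, your final recommendation to do the same is exactly what is done.
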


The next result, which may be of independent interest,
is a variant of Montgomery and Vaughan~\cite[Exercise~2, p.~382]{MontVau};
our proof uses ideas of Gallagher (see~\cite[\S4]{Gallag}).

\begin{lemma}
\label{lem:fame}
There is an effectively computable constant $c_4>0$ with
the following property.
Let $\chi$ be a primitive Dirichlet character modulo $q$, where $q>1$, and put
$$
\psi(x,\chi):=\sum_{n\le x}\chi(n)\Lambda(n)\qquad(x>0),
$$
where $\Lambda$ is the von Mangoldt function.
For any $c>0$ there is a constant $K=K(c)$
such that the estimate
$$
\psi(x,\chi)=-\sum_\varrho\widetilde m(\varrho)\frac{x^\varrho}{\varrho}
+O\bigg(x\,\exp\bigg(-c\,\frac{\log x}{\log q}\bigg)\bigg)
$$
holds uniformly provided that
\begin{equation}\label{eq:bounds}
\exp(K\sqrt{\log x})\le q\le x^{c_4},
\end{equation}
where the sum is taken over distinct zeros $\varrho=\beta+i\gamma$ of $L(s,\chi)$ for which $\beta>1-2c/\log q$ and $|\gamma|\le q$,
$\widetilde m(\varrho)$ is the multiplicity of any such zero,
and the implied constant depends only on $c$.
\end{lemma}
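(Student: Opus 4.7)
The plan is to establish the explicit formula via a (possibly smoothed) truncated Perron identity for $\psi(x,\chi)$ combined with the log-free zero density of Lemma~\ref{lem:fortune}. I would start from the standard truncated explicit formula for a primitive non-principal $\chi$ modulo $q$,
$$
\psi(x,\chi)=-\sum_{|\gamma|\le T}\widetilde m(\varrho)\,\frac{x^\varrho}{\varrho}+O\!\left(\frac{x(\log qT)^{2}}{T}+\log qx\right),
$$
valid for $2\le T\le x$. Setting $T=q$, the Perron error term becomes $x(\log q)^{2}/q$; the lower bound $q\ge\exp(K\sqrt{\log x})$ in \eqref{eq:bounds}, together with $K=K(c)$ chosen sufficiently large, then forces this to be $\ll x\exp(-c\log x/\log q)$, fitting the target error budget.

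Next I would split the zero sum into the main term---distinct zeros with $|\gamma|\le q$ and $\beta>\sigma^*:=1-2c/\log q$, which is precisely what appears in the lemma---and a residual tail over zeros with $|\gamma|\le q$ and $\beta\le\sigma^*$. To bound the tail, I would decompose $|\gamma|\in[1,q]$ dyadically and apply Abel summation on each block against the log-free bound $N(\sigma,2^{j+1},\chi)\ll(q\cdot 2^{j+1})^{c_1(1-\sigma)}$ from Lemma~\ref{lem:fortune}. Using that the integrand $x^{\sigma}\,q^{2c_1(1-\sigma)}\cdot 2^{(j+1)c_1(1-\sigma)}$ is monotonically increasing in $\sigma$ whenever $\log x\ge 2c_1\log(q\cdot 2^{j+1})$---which is guaranteed by the upper bound $q\le x^{c_4}$ once $c_4$ is absolutely small---the tail collapses to its right endpoint $\sigma=\sigma^*$ and is dominated by
$$
x\,e^{-2c\log x/\log q}\,(\log q)^{O(1)}.
$$
The slack between the exponents $2c$ in the strip width and $c$ in the error absorbs the $(\log q)^{O(1)}$ factor, bringing the tail comfortably below $x\exp(-c\log x/\log q)$. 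The finitely many zeros with $|\gamma|<1$ are handled separately via the Hadamard factorisation of $L(s,\chi)$.

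The main obstacle will be arranging all of these estimates with a single absolute constant $c_4$ valid for every $c>0$; the polynomial-logarithmic losses from the zero-density integration must be dominated by the exponential slack $e^{-c\log x/\log q}$ uniformly in $q$, which is delicate in the regime where $c$ is tiny and one is forced to iterate the bookkeeping (perhaps by working with a slightly narrower intermediate strip, so that a small further gap is reserved to kill the log factors). A secondary technical point---addressed in Gallagher~\cite{Gallag}---is that the sharp Perron truncation behaves poorly when zeros lie close to the line $\Re s=\sigma^*$; it is cleaner to replace the sharp cutoff by a smooth Mellin kernel whose inverse decays faster than any polynomial in $|\Im s|$, at the modest cost of undoing the convolution at the end.
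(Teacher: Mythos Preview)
Your overall strategy---truncated explicit formula at $T=q$, then the log-free density of Lemma~\ref{lem:fortune} to bound the tail over $\beta\le\sigma^*$---matches the paper, and your Perron error analysis is correct. Two points, however, deserve comment.

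First, the dyadic decomposition in $|\gamma|$ and the smoothed Mellin kernel are unnecessary, and your concern about making $c_4$ uniform in $c$ is misplaced. The paper simply takes $c_4:=\min\{3/4,(4c_1)^{-1}\}$, which is absolute, and applies $N(\alpha,q,\chi)\ll q^{2c_1(1-\alpha)}\le x^{(1-\alpha)/2}$ in a single Abel summation over $\alpha\in[\beta_0,1-\eta]$; the integrand $x^{(1+\alpha)/2}$ is monotone and yields $x^{1-\eta/2}=E$ directly, with no stray $\log$ factors to absorb.

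Second---and this is the genuine gap---your line ``the finitely many zeros with $|\gamma|<1$ are handled separately via the Hadamard factorisation'' glosses over the only real difficulty. The problem is not the number of such zeros but the size of $1/|\varrho|$ when $\varrho$ lies near $s=0$. The paper splits the tail instead by $\beta\le\beta_0$ versus $\beta_0<\beta\le 1-\eta$ for a fixed $\beta_0\in(0,\tfrac12)$; on the right piece $|\varrho|\ge\beta_0$ and the density argument goes through. On the left piece, the functional equation reflects these zeros to zeros near $s=1$, so the zero-free region of Lemma~\ref{lem:exceptional} gives $\min_{\varrho}|\varrho|\ge c_2/\log q$, whence the contribution is $\ll(\log q)\,x^{(1+\beta_0)/2}=O(E)$---\emph{unless} $\chi$ is real with a Siegel zero $\beta^*$. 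In that exceptional case the reflected zero $\delta^*=1-\beta^*$ must be singled out and controlled via the Goldfeld--Schinzel bound $\delta^*\gg q^{-1/2}$, giving $x^{\delta^*}/\delta^*\ll q^{1/2}x^{\eps}\ll x^{c_4/2+\eps}=O(E)$. This exceptional/non-exceptional dichotomy, and the essential use of Lemma~\ref{lem:exceptional}, are missing from your sketch; ``Hadamard factorisation'' alone does not supply the required lower bound on $|\varrho|$.
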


\begin{proof} 
Let $c_4:=\min\{\frac34,(4c_1)^{-1}\}$, where $c_1$ is the constant described in Lemma~\ref{lem:fortune}.

We have by Davenport~\cite[\S19]{Daven} (with $T:=q$):
\begin{equation}\label{ssylka}
\psi(x,\chi)=-\sum_{\varrho\in\sZ}\widetilde m(\varrho)
\frac{x^\varrho}{\varrho}+R(x,q),
\end{equation}
where
$$
|R(x,q)| \ll xq^{-1} \log^2{qx} +x^{1/4}\log x
$$
and $\sZ$ denotes the set of nontrivial zeros
$\varrho=\beta+i\gamma$ of $L(s,\chi)$ such that $0\le\beta\le 1$
and $|\gamma|\le q$. 
(Recall that if $\chi(-1)=1$,  the Dirichlet $L$-function $L(s, \chi)$ vanishes at $s=0$; however, this trivial zero $\varrho=0$ is not included in the sum).

Since by hypothesis $
q\le x^{c_4} $  and  $ c_4\le \frac34$, we have 
$$
 \log^2{qx}=O\left ( \log^2 x\right )\mand x^{1/4}\log x=O\bigg(\frac{x\log^2x}{q}\bigg),
$$
and therefore
$$
|R(x,q)|=O\bigg(\frac{x\log^2x}{q}\bigg),
$$
which together with \eqref{ssylka} proves that 
\begin{equation}\label{razraz}
\psi(x,\chi)=-\sum_{\varrho\in\sZ}\widetilde m(\varrho)
\frac{x^\varrho}{\varrho}+O\bigg(\frac{x\log^2x}{q}\bigg)\qquad (x\to \infty)
\end{equation}
(or $q\to \infty$, cf.\ \eqref{eq:bounds}).

We observe that for any fixed $K>\sqrt{c}$  we have the following estimate 
$$\frac{x\log^2x}{q}=O(E)\qquad
(q\ge\exp(K\sqrt{\log x})),
$$
where
$$
E:=x\,\exp(-c(\log x)/\log q).
$$
Thus \eqref{razraz} yields the representation  
$$
\psi(x,\chi)
=-\sum_{\varrho\in\sZ}\widetilde m(\varrho)\frac{x^\varrho}{\varrho}+O(E).
$$

Put $\eta:=2c/\log q$, and let
$\sZ_1$ and $\sZ_2$ be the set of zeros in $\sZ$ that satisfy
$\beta\le 1-\eta$ and $\beta>1-\eta$, respectively.  To prove the lemma, we need to show that
\begin{equation}\label{raz}
\sum_{\varrho\in\sZ_1}\widetilde m(\varrho)\frac{x^\varrho}{\varrho}\ll E.
\end{equation}
To do this, choose some $\beta_0\in(0,\frac12)$
and split $\sZ$ into disjoint subsets
$$
\sZ=\sL\cup\sR,
$$
where $\sL$ and $\sR$ denote the set of zeros in $\sZ_1$ that satisfy the inequalities $0<\beta\le\beta_0$ and
$\beta_0<\beta\le1-\eta$, respectively.  Define
the corresponding zero-counting functions
$$
N_\sL(\sigma,q,\chi):=N(\sigma,q,\chi)-N\left (\beta_0,q,\chi\right ) \qquad (\sigma\in\left [0,\beta_0\right ])
$$
and 
$$
N_\sR(\sigma,q,\chi):=N(\sigma,q,\chi)-N(1-\eta,q,\chi)
\qquad(\sigma\in[\beta_0,1-\eta]),
$$
where as above $N(\sigma,q,\chi)$ denotes the number of zeros
of $L(s,\chi)$ in the region
$$
\{s\in\C:\sigma\le\Re\,s\le 1,~|\Im\,s|\le q\},
$$
counted with multiplicity. 

To bound $\sum_{\varrho\in\sR}\widetilde m(\varrho)\frac{x^\varrho}{\varrho}$ we begin by observing that
\begin{align*}
 \min_{\varrho\in \sR}|\varrho|\cdot \bigg|\sum_{\varrho\in\sR}\widetilde m(\varrho)\frac{x^\varrho}{\varrho}\bigg|
&\ll\sum_{\varrho\in\sR}\widetilde m(\varrho)x^{\Re(\rho)}
=-\int_{\beta_0}^{1-\eta}x^\alpha\,dN_\sR(\alpha,q,\chi)\\
&=x^{\beta_0}N_\sR(\beta_0,q,\chi)
+\log x\int_{\beta_0}^{1-\eta}x^\alpha N_\sR(\alpha,q,\chi)\,d\alpha\\
&\le  x^{\beta_0}N(0,q,\chi)+\log x\int_{0}^{1-\eta}x^\alpha N(\alpha,q,\chi)\,d\alpha.
\end{align*}
 Since $c_4\le (4c_1)^{-1}$ we have by Lemma~\ref{lem:fortune}:
$$
N(\alpha,q,\chi)\ll q^{2c_1(1-\alpha)}\le x^{1/2(1-\alpha)};
$$
thus,
\begin{align*}
 x^{\beta_0}N(0,q,\chi)+\log x\int_{0}^{1-\eta}x^\alpha N(\alpha,q,\chi)\,d\alpha
 &\ll
 x^{\beta_0+1/2}+\log x\int_0^{1-\eta}x^{\frac12(1+\alpha)}\,d\alpha
\\
&\ll  x^{\beta_0+1/2}+ x^{1-\eta/2}.
\end{align*}
Consequently, we have
$$
\bigg|\sum_{\varrho\in\sR}\widetilde m(\varrho)\frac{x^\varrho}{\varrho}\bigg|\ll
\frac{1}{\min_{\varrho\in \sR}|\varrho|}
(x^{\beta_0+1/2}+x^{1-\eta/ 2}).
$$
Since
$$
 \min_{\varrho\in \sR}|\varrho|\ge \frac1\beta_0>2
$$
we deduce that
\begin{equation}\label{tri}
\bigg|\sum_{\varrho\in\sR}\widetilde m(\varrho)\frac{x^\varrho}{\varrho}\bigg|\ll x^{1-\eta/ 2}=E=O(E).
\end{equation}

To estimate the sum  $\sum_{\varrho\in\sL}\widetilde m(\varrho)\frac{x^\varrho}{\varrho}$ we proceed in a similar way,
assuming initially that the character $\chi$ is not exceptional
(that is, the function $L(s,\chi)$ has no Siegel zero).
We have 
\begin{align*}
\min_{\varrho\in \sL}|\varrho|\cdot
\bigg|\sum_{\varrho\in\sL}\widetilde m(\varrho)\frac{x^\varrho}{\varrho}\bigg|
&\ll\sum_{\varrho\in\sL}\widetilde m(\varrho)x^{\beta(\varrho)}
=-\int_0^{\beta_0}x^\alpha\,dN_\sL(\alpha,q,\chi)\\
&=N_\sL(0,q,\chi)
+\log x\int_0^{\beta_0}x^\alpha N_\sL(\alpha,q,\chi)\,d\alpha\\
&\le N(0,q,\chi)+\log x\int_0^{\beta_0}x^\alpha N(\alpha,q,\chi)\,d\alpha\\
&\ll x^{1/2}+x^{(1+\beta_0)/2}\ll x^{(1+\beta_0)/2}.
\end{align*}
Applying Lemma \ref{lem:exceptional} and using the functional equation for the $L$-function, we have the lower bound
$$
\min_{\varrho\in \sL}|\varrho|\ge \frac{c_2}{\log q},
$$
and hence
\begin{equation}\label{dva}
\bigg|\sum_{\varrho\in\sL}\widetilde m(\varrho)\frac{x^\varrho}{\varrho}\bigg|\ll\log q \cdot x^{3/4}=O(E),
\end{equation}
where we have taken into account that $q\le x^{c_4}$ and that
$
\frac{1+\beta_0}{2}<\frac34.
$
Combining \eqref{dva} and \eqref{tri} we obtain \eqref{raz}
in this case.

To treat the case in which $\chi$ is exceptional, suppose now
that $L(\beta^*,\chi)=0$ with $\beta^*$ being the exceptional
zero.  Since $\chi$ is a primitive character, one can use the functional equation for  $L$-functions to conclude that 
$L(\delta^*, \chi)=0
$ where $\delta^*=1-\beta^*$ (see, e.g.,  \cite[\S9, eq.(8)]{Daven} and  \cite[\S9, eq.(11)]{Daven} if $\chi(-1)=1$ and  $\chi(-1)=-1$, respectively).  Then
$$
\left |\sum_{\varrho\in\sL}\widetilde m(\varrho)\frac{x^\varrho}{\varrho}\right |\le \frac{x^{\delta^*}}{\delta^*}
+\left |\sum_{\varrho\in\sL\setminus \{\delta^*\}}\widetilde m(\varrho)\frac{x^\varrho}{\varrho}\right |.
$$
Using Lemma \ref{lem:exceptional} one concludes as above that 
\begin{equation}\label{dva2}
\left|\sum_{\varrho\in\sL\setminus \{\delta^*\}}\widetilde m(\varrho)\frac{x^\varrho}{\varrho}\right|=O(E).
\end{equation}
For the remaining term we use the estimate
$$
\delta^*=1-\beta^*>Cq^{-\frac12}
\begin{cases}
1 & \text{if } \chi(-1)=-1,\\
\log q &\text{if } \chi(-1)=1,
\end{cases}
$$
with some $C>0$ (see, e.g., \cite{Gold}), which yields
for any fixed $\varepsilon>0$:
$$
 \frac{x^{\delta^*}}{\delta^*}=O(\sqrt{q}\cdot x^\varepsilon)=O(x^{c_4/2}\cdot x^\varepsilon)=O(E).
$$
Combining this estimate with \eqref{dva2} and \eqref{tri}
we also obtain \eqref{raz} in the case that $\chi$ is exceptional.
\end{proof}

Finally, we need the following statement.

\begin{lemma}
\label{lem:gift}
Let $\cU$ be a finite set of complex numbers.  For any nonzero complex
numbers $c_u$ one can find arbitrarily large values of $\theta$
for which the function $f(\theta):=\sum_{u\in\cU}c_u e^{-u\theta}$
satisfies the lower bound $|f(\theta)|\ge Ce^{-\mu \theta}$, where
$\mu:=\min_{u\in\cU}\{\Re\,u\}$ and $C$ is a positive constant
depending only on $f$.
\end{lemma}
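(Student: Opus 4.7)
The plan is to factor out the governing decay rate and reduce the problem to a statement about a nonzero finite trigonometric polynomial. Put $g(\theta):=e^{\mu\theta}f(\theta)=\sum_{u\in\cU}c_u e^{-(u-\mu)\theta}$; the desired lower bound $|f(\theta)|\ge Ce^{-\mu\theta}$ is equivalent to $|g(\theta)|\ge C$, so the task becomes producing an unbounded set of $\theta$ on which $|g(\theta)|$ is bounded away from zero.

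Split $\cU=\cU_0\sqcup\cU_+$ with $\cU_0:=\{u\in\cU:\Re u=\mu\}$ and $\cU_+:=\cU\setminus\cU_0$, and write $g=g_0+g_+$ accordingly. For each $u\in\cU_+$ we have $\Re(u-\mu)\ge\eta$ with $\eta:=\min_{u\in\cU_+}\Re(u-\mu)>0$, so $|g_+(\theta)|\le Me^{-\eta\theta}\to 0$ as $\theta\to\infty$, where $M:=\sum_{u\in\cU_+}|c_u|$ (if $\cU_+=\emptyset$ there is no tail at all). The function $g_0(\theta)=\sum_{u\in\cU_0}c_u e^{-i(\Im u)\theta}$ is a finite trigonometric polynomial whose frequencies $\{\Im u\}_{u\in\cU_0}$ are pairwise distinct, because elements of $\cU_0$ share the same real part $\mu$ and $\cU$ is a set.

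The core step is to compute the mean square of $g_0$. Expanding $|g_0|^2$ and using that $\lim_{T\to\infty}T^{-1}\int_0^T e^{i\alpha\theta}\,d\theta$ equals $1$ when $\alpha=0$ and $0$ otherwise, the off-diagonal contributions vanish in the limit and one obtains
$$
\lim_{T\to\infty}\frac{1}{T}\int_0^T|g_0(\theta)|^2\,d\theta=\sum_{u\in\cU_0}|c_u|^2=:A>0,
$$
where positivity comes from the hypothesis $c_u\ne 0$ together with $\cU_0\ne\emptyset$. If $|g_0(\theta)|^2<A/2$ held for every $\theta$ beyond some $T_0$, then the Ces\`aro average would satisfy $\limsup_{T\to\infty}T^{-1}\int_0^T|g_0|^2\,d\theta\le A/2<A$, a contradiction. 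Hence there is an unbounded set of $\theta$ on which $|g_0(\theta)|\ge\sqrt{A/2}$; choosing $\theta$ from this set large enough that $|g_+(\theta)|\le\tfrac14\sqrt A$ yields $|g(\theta)|\ge\sqrt{A/2}-\tfrac14\sqrt A\ge\tfrac14\sqrt A$, and $C:=\tfrac14\sqrt A$ works.

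The only subtle point, and what I would call the main obstacle (modest though it is), is recognizing that distinctness of the points of $\cU_0$ forces their imaginary parts to be distinct, so that the orthogonality computation produces a strictly positive mean square rather than collapsing. An alternative route via Kronecker's theorem or Bohr almost-periodicity would instead produce $\theta$ at which $|g_0(\theta)|$ is close to $\sup_{\theta\in\R}|g_0(\theta)|$, but the mean-square approach above is self-contained and delivers more than enough.
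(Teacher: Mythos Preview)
Your proof is correct and follows essentially the same route as the paper: factor out $e^{-\mu\theta}$, discard (or bound) the exponentially decaying tail from $\cU_+$, and apply the mean-square identity (what the paper calls Wiener's Lemma) to the remaining trigonometric polynomial to force $\limsup|g_0|>0$. Your version is slightly more explicit in tracking the constant $C=\tfrac14\sqrt{A}$ and in noting why the frequencies of $g_0$ are distinct, but the underlying argument is the same.
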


\begin{proof}
Replacing $f(\theta)$ with
$e^{u_0\theta}f(\theta)$, where $u_0$ denotes any fixed
element of $\cU$ for which $\Re\,u_0=\mu$, we can assume
without loss of generality that $\mu=0$.
Moreover, denoting by $\cU_+$ the set of $u\in\cU$ with $\Re\,u>0$,
we clearly have
$\sum_{u\in\cU_+}c_u e^{-u\theta}=o(1)$ as $\theta\to\infty$;
hence, we can also assume that $\Re\,u=0$ for all $u\in\cU$.
With these assumptions,
the lemma is a consequence of Wiener's Lemma:
\begin{equation}
\label{eq:wiener}
\lim_{T\to\infty}\frac{1}{T}\int_0^T |f(\theta)|^2\,d\theta
=\sum_{u\in\cU}|c_u|^2.
\end{equation}
Indeed, the premise that $\limsup_{\theta\to\infty}|f(\theta)|=0$ leads to
$$
\lim_{T\to\infty}\frac{1}{T}\int_0^T |f(\theta)|^2\,d\theta
=\lim_{\theta\to\infty}|f(\theta)|^2=0,
$$
which is impossible in view of \eqref{eq:wiener};
therefore, $\limsup_{\theta\to\infty}|f(\theta)|>0$,
which completes the proof.
\end{proof}

\subsection{A relation involving $\delta$}

Thanks to Hildebrand~\cite{Hild} it is known that for every $\eps>0$
there is a number $p_0(\eps)\ge 2$ such that
$$
\bigg|\sum_{n\le X}(n|p)\bigg|\le\eps X
\qquad (p\ge p_0(\eps),~X\ge p^{1/4}).
$$
The next statement is an immediate consequence of
Hildebrand's result.

\begin{lemma}
\label{lem:hildebrand} The estimate
$$
\big|\sN_p(X)\big|=(1/2+o(1))X\qquad(p\to\infty)
$$
holds for all $X\ge p^{1/4}$, where $\sN_p$ is given by \eqref{eq:NpXdefi}, and the function implied by $o(1)$
depends only on $p$.
\end{lemma}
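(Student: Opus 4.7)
The plan is to reduce the lemma to Hildebrand's bound via a straightforward decomposition of $[1,X]\cap\N$ by the value of the Legendre symbol. Write
$$
\sR_p(X):=\{n\le X:(n|p)=+1\}\mand \sZ_p(X):=\{n\le X:(n|p)=0\},
$$
so that $\sZ_p(X)$ is exactly the set of multiples of $p$ in $[1,X]$. Then one has the two identities
$$
|\sN_p(X)|+|\sR_p(X)|+|\sZ_p(X)|=\fl{X}\mand |\sR_p(X)|-|\sN_p(X)|=\sum_{n\le X}(n|p),
$$
which I will subtract to obtain
$$
2|\sN_p(X)|=\fl{X}-|\sZ_p(X)|-\sum_{n\le X}(n|p).
$$

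Next I would control each error term uniformly for $X\ge p^{1/4}$ as $p\to\infty$. The term $\fl{X}=X+O(1)$ contributes $X/2+O(1)=X/2+o(X)$, since $X\ge p^{1/4}\to\infty$. For the multiples of $p$ the trivial bound
$$
|\sZ_p(X)|\le X/p
$$
gives a contribution that is $o(X)$ uniformly in $X$, depending only on $p$. Finally, given $\eps>0$, Hildebrand's theorem supplies $p_0(\eps)$ such that
$$
\Bigl|\sum_{n\le X}(n|p)\Bigr|\le\eps X\qquad(p\ge p_0(\eps),~X\ge p^{1/4}),
$$
so this third contribution is also $\eps X$ uniformly in $X\ge p^{1/4}$.

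Combining these three bounds yields $2|\sN_p(X)|=X+\eta_p(X)\cdot X$ where $|\eta_p(X)|\to 0$ as $p\to\infty$ uniformly for $X\ge p^{1/4}$, i.e., $\eta_p:=\sup_{X\ge p^{1/4}}|\eta_p(X)|$ is a function of $p$ alone that tends to $0$. Dividing by $X$ gives the asserted asymptotic $|\sN_p(X)|=(1/2+o(1))X$ with the $o(1)$ depending only on $p$, as required. There is no real obstacle here: the only subtlety worth flagging is maintaining uniformity in $X$, which is automatic once one observes that $\fl X/X=1+O(X^{-1})=1+O(p^{-1/4})$ and $|\sZ_p(X)|/X\le p^{-1}$ in the regime $X\ge p^{1/4}$, while the Hildebrand bound is already uniform.
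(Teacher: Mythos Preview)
Your proof is correct and is exactly the natural way to unpack the paper's one-line justification: the paper simply states that the lemma ``is an immediate consequence of Hildebrand's result'' without further detail, and your decomposition into residues, nonresidues, and multiples of $p$ together with the three error estimates is the standard way to make that implication explicit. There is nothing to add.
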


In what follows, let $C$ be a large positive number.  All constants
implied by the symbols $O$ and $\ll$ may depend on $\kappa,\lambda,\distr,C$ but
are absolute otherwise.
The symbol $\error$ in any expression below indicates an error term
that tends to zero as $p$ tends to infinity \emph{within the set $\sP$}.
Any function of $p$ implied by $\error$ may depend on
$\kappa,\lambda,\distr,C$ but is independent of all other parameters.

For every prime $p\in\sP$ let $\sK_p$ denote the set of squarefree
integers $k>1$ with the property that $(q|p)=-1$ for all primes $q$
dividing $k$. The next result is based on the inclusion-exclusion principle.

\begin{lemma}
\label{lem:inc-excl} Uniformly for $\theta\in[0,C]$ we have
$$
\big|\sN_p(p^\theta)\big|=\sum_{\substack{k\le
p^\theta\\k\in\sK_p}}(-1)^{\omega(k)+1}\big|\{m\le
p^\theta/k:(m|p)=-(k|p)\}\big|+\error p^\theta,
$$
where $\omega(k)$ is the number of distinct prime divisors of $k$.
\end{lemma}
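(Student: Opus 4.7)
The plan is inclusion--exclusion, stratifying each $n \in \sN_p(p^\theta)$ by its set of distinct nonresidue prime divisors. For every $n \ge 1$, define the ``nonresidue radical'' $d(n) := \prod_{q \mid n,\,(q|p)=-1} q$. Multiplicativity of the Legendre symbol forces $d(n) > 1$ whenever $(n|p) = -1$, and by construction $d(n) \in \sK_p$ in that case.

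The combinatorial input is the identity $\sum_{k \mid d,\,k > 1}(-1)^{\omega(k)+1} = 1$, valid for every $d \in \sK_p$, which is an immediate consequence of $\sum_{k \mid d}(-1)^{\omega(k)} = \prod_{q \mid d}(1-1) = 0$ for squarefree $d > 1$. Inserting this identity into $|\sN_p(p^\theta)| = \sum_{n \le p^\theta,\,(n|p)=-1} 1$ and interchanging the order of summation yields
\begin{equation*}
|\sN_p(p^\theta)| = \sum_{\substack{k \in \sK_p \\ k \le p^\theta}}(-1)^{\omega(k)+1}\, \bigl|\{n \le p^\theta : (n|p) = -1,\ k \mid n\}\bigr|,
\end{equation*}
where the bound $k \le p^\theta$ is forced by $k \mid n \le p^\theta$, and the reduction $k \mid d(n) \iff k \mid n$ is immediate for squarefree $k$ supported on nonresidue primes.

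The final step substitutes $n = km$. Because $p \nmid k$ for $k \in \sK_p$, one has $(k|p) = \pm 1$, and multiplicativity yields $(km|p) = -1 \iff (m|p) = -(k|p)$ (with both sides failing when $p \mid m$). This converts the inner count into $|\{m \le p^\theta/k : (m|p) = -(k|p)\}|$ and delivers the claimed formula as an exact equality; consequently the term $\error p^\theta$ in the statement is in fact $0$ and serves only as harmless slack for later applications.

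There is essentially no obstacle. The only points that need tracking are the reduction $k \mid d(n) \iff k \mid n$, which relies on $k$ being squarefree and composed only of nonresidue primes (both built into the definition of $\sK_p$), and the translation between $(km|p) = -1$ and $(m|p) = -(k|p)$, which rests on $(k|p)^2 = 1$.
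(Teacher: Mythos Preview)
Your argument is correct and in fact proves the sharper statement that the identity holds with zero error. The paper proceeds by the same inclusion--exclusion, but unnecessarily separates out the set $\sN_2$ of those $n\in\sN_p(p^\theta)$ divisible by the square of some nonresidue prime: it verifies the alternating binomial identity only on the complement $\sN_1$ and then bounds both $|\sN_2|$ and the contribution from pairs $(m,k)$ with $mk\in\sN_2$ by $\error p^\theta$ separately. Your key observation---that for squarefree $k$ supported on nonresidue primes one has $k\mid d(n)\iff k\mid n$ for \emph{every} $n$, so that the identity $\sum_{k\mid d(n),\,k>1}(-1)^{\omega(k)+1}=1$ applies regardless of whether $n$ has repeated nonresidue prime factors---renders this case distinction superfluous and yields an exact formula.
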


\begin{proof}
For each $p\in\sP$ let $\sA_p$ denote the set of ordered pairs given by
$$
\sA_p:=\{(m,k):k\in\sK_p,~m\le p^\theta/k,~(m|p)=-(k|p)\}.
$$
Then
\begin{equation}
\label{eq:Ap1} \sum_{(m,k)\in\sA_p}(-1)^{\omega(k)+1}
=\sum_{\substack{k\le
p^\theta\\k\in\sK_p}}(-1)^{\omega(k)+1}
\big|\{m\le p^\theta/k:(m|p)=-(k|p)\}\big|.
\end{equation}
Next, split $\sN_p(p^\theta)$ into a disjoint union
$\sN_1\cup\sN_2$, where
$$
\sN_1:=\{n\in\sN_p(p^\theta):q^2\nmid n\text{~if~}(q|p)=-1\},
$$
and $\sN_2:=\sN_p(p^\theta)\setminus\sN_1$.
Since $n_0(p)>p^\kappa$ and $\kappa>0$ we have
$$
|\sN_2|\le\sum_{p^\kappa<q\le p^\theta}
\big|\{n\le p^\theta:q^2\mid n\text{~and~}(q|p)=-1\}\big|\le
p^\theta\sum_{q>p^\kappa}q^{-2}=\error p^\theta,
$$
and therefore
\begin{equation}
\label{eq:Ap2} \big|\sN_p(p^\theta)\big|=|\sN_1|+\error p^\theta.
\end{equation}
Each number $n\in\sN_1$ can be factored as $n^+n^-$, where
$$
n^+:=\prod_{\substack{q^\alpha\|n\\ (q|p)=+1}}q^\alpha\mand
n^-:=\prod_{\substack{q\,\mid\,n\\ (q|p)=-1}}q.
$$
Let $r_j(n)$ denote the number of pairs $(m,k)\in\sA_p$ such that
$mk=n$ and $\omega(k)=j$.  Then
$$
r_j(n)=\big|\{k>1:k\,\mid\,
n^-,~\omega(k)=j\}\big|=\binom{\omega(n^-)}{j}.
$$
Hence, denoting by $\sB_p$ the subset of $\sA_p$ consisting
of pairs $(m,k)$ for which $mk\in\sN_1$, we have
\begin{equation}
\label{eq:Ap3} \sum_{(m,k)\in\sB_p}(-1)^{\omega(k)+1}=
\sum_{n\in\sN_1}\sum_{j=1}^{\omega(n^-)}(-1)^{j+1}r_j(n)=|\sN_1|
\end{equation}
since each inner sum is
$$
\sum_{j=1}^{\omega(n^-)}(-1)^{j+1}\binom{\omega(n^-)}{j}=1.
$$
Finally, we observe that the set $\sC_p:=\sA_p\setminus\sB_p$ consists
of pairs $(m,k)\in\sA_p$ such that $q^2\mid mk$ for some prime
nonresidue $q$.  Fixing $\eps:=\kappa/(2C)$
and using the divisor bound $\sum_{k\,\mid\, n}1\ll n^\eps$ for
all $n\in\N$, for any $\theta\in[0,C]$ we derive that
\begin{equation*}
\begin{split}
|\sC_p|&\le\sum_{p^\kappa<q\le
p^\theta}\sum_{\substack{n\le p^\theta\\
q^2\,\mid\,n}}\sum_{k\,\mid\,n}1
\ll p^{\theta\eps}\sum_{p^\kappa<q\le
p^\theta}\sum_{\substack{n\le p^\theta\\
q^2\,\mid\,n}}1\\
&\le p^{\theta(1+\eps)}\sum_{q>p^\kappa}q^{-2} \ll
p^{\theta(1+\eps)-\kappa}\le p^{\theta-\kappa/2}
=\error p^\theta.
\end{split}
\end{equation*}
Using this result together with~\eqref{eq:Ap2} and~\eqref{eq:Ap3}
we deduce that
\begin{equation*}
\begin{split}
\sum_{(m,k)\in\sA_p}(-1)^{\omega(k)+1}
=\sum_{(m,k)\in\sB_p}(-1)^{\omega(k)+1}+\error p^\theta
=\big|\sN_p(p^\theta)\big|+\error p^\theta.
\end{split}
\end{equation*}
In view of~\eqref{eq:Ap1}, this completes the proof.
\end{proof}

Next, using \eqref{eq:Np-est} we see that for fixed $k\in\sK_p$
and uniformly for $\vartheta\in[0,C]$ we have
\begin{equation}
\label{eq:chess}
\big|\{n\le p^\vartheta:(n|p)=-(k|p)\}\big|
=(\delta_{k,p}(\vartheta)+\error)p^\vartheta,
\end{equation}
where
$$
\delta_{k,p}:=\left\{
\begin{array}{ll}
    \delta & \quad\hbox{if $(k|p)=+1$,} \\
    1-\delta & \quad\hbox{if $(k|p)=-1$\, .} \\
\end{array}
\right.
$$
Hence, from Lemma~\ref{lem:inc-excl} we deduce the estimate
$$
\big|\sN_p(p^\theta)\big|
=\sum_{\substack{k\le p^\theta\\k\in\sK_p}}
(-1)^{\omega(k)+1}\big(\delta_{k,p}
\big(\theta-\tfrac{\log k}{\log p}\big)+\error\big)p^\theta k^{-1}
+\error p^\theta.
$$
Mertens' theorem yields the bound
\begin{equation}
\label{eq:Mertens}
\sum_{\substack{k\le p^\theta\\k\in\sK_p}}k^{-1}
\le\bigg(\,\sum_{p^{\kappa}<q\le
p^\theta}q^{-1}\bigg)^{\fl{\theta/\kappa}}
\le(\log(C/\kappa)+O(1))^{C/\kappa}=O(1),
\end{equation}
where $\fl{\cdot}$ is the floor function,
and therefore
$$
\big|\sN_p(p^\theta)\big|=p^\theta \sum_{\substack{k\le
p^\theta\\k\in\sK_p}}(-1)^{\omega(k)+1}
\delta_{k,p}\big(\theta-\tfrac{\log k}{\log p}\big)k^{-1}+\error
p^\theta.
$$
Dividing both sides by $p^\theta$, using \eqref{eq:chess}, and taking
into account the fact that $\delta(\theta-u)=0$ for $u\ge\theta-\kappa$,
we derive the relation
\begin{equation}
\label{eq:delta-relation} \delta(\theta)=\sum_{\substack{k\le
p^\theta\\k\in\sK_p\\\omega(k)\text{~odd}}}k^{-1}-
\sum_{\substack{k\le p^{\theta-\kappa}\\k\in\sK_p}}\delta
\big(\theta-\tfrac{\log k}{\log p}\big)k^{-1}+\error,
\end{equation}
which holds uniformly for all $\theta\in[0,C]$.

\subsection{The functions $\{S_{p,j}\}$}

Next, we study the functions defined by
\begin{equation}
\label{eq:Spj-defn}
S_{p,j}(\theta):=\sum_{\substack{k\le p^\theta,~k\in\sK_p\\
\omega(k)=j}}k^{-1}\qquad(p\in\sP,~j\ge 0,~\theta\ge 0).
\end{equation}
Each function $S_{p,j}$ is a nondecreasing step function of
bounded variation on any finite interval.  From \eqref{eq:Mertens}
we see that the bound
\begin{equation}
\label{eq:Spj-bound-above}
S_{p,j}(\theta)=O(1)
\end{equation}
holds uniformly for $p\in\sP$, $j\ge 0$ and $\theta\in[0,C]$.  Note that
for $j\in\N$ we have
\begin{equation}
\label{eq:Spj-vanishes}
S_{p,j}(\theta)=0\qquad(\theta\in [0,j\kappa])
\end{equation}
since every integer $k$ occurring in the sum \eqref{eq:Spj-defn}
has $j$ distinct prime factors, each of size at least $n_0(p)>p^\kappa$.

\begin{lemma}
\label{lem:Sp1} Uniformly for $\theta\in[\kappa,C]$ we have
\begin{equation}
\label{eq:Sp1-expansion}
S_{p,1}(\theta)=\delta(\theta)-\sum_{\text{\rm odd~}j\ge 3}
S_{p,j}(\theta)+\frac12\sum_{j\in\N}
\int_0^{\theta-\kappa} S_{p,j}(u)\,\distr(\theta-u)\,du+\error.
\end{equation}
\end{lemma}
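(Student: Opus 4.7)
The plan is to manipulate the relation \eqref{eq:delta-relation} into the form \eqref{eq:Sp1-expansion} by decomposing both sums according to $j=\omega(k)$ and then recasting the $\delta$-weighted sum as a family of Stieltjes integrals. The first sum in \eqref{eq:delta-relation} partitions immediately as
$$
\sum_{\substack{k\le p^\theta,\,k\in\sK_p\\\omega(k)\text{~odd}}}k^{-1}
= S_{p,1}(\theta)+\sum_{\text{odd }j\ge 3}S_{p,j}(\theta),
$$
while grouping the $\delta$-weighted sum by $j=\omega(k)$ and recognizing the atoms of $dS_{p,j}$ as point masses of weight $k^{-1}$ located at $u=\log k/\log p$ gives
$$
\sum_{\substack{k\le p^{\theta-\kappa}\\k\in\sK_p}}
\delta\big(\theta-\tfrac{\log k}{\log p}\big)k^{-1}
=\sum_{j\in\N}\int_0^{\theta-\kappa}\delta(\theta-u)\,dS_{p,j}(u).
$$

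Next I would apply Stieltjes integration by parts to each of these integrals. Since $u\mapsto\delta(\theta-u)$ is absolutely continuous with derivative $-\tfrac12\distr(\theta-u)$ and shares no points of discontinuity with the step function $S_{p,j}$, the standard formula yields
$$
\int_0^{\theta-\kappa}\delta(\theta-u)\,dS_{p,j}(u)
= \big[\delta(\theta-u)S_{p,j}(u)\big]_0^{\theta-\kappa}
+ \tfrac12\int_0^{\theta-\kappa}S_{p,j}(u)\distr(\theta-u)\,du.
$$
Both boundary contributions vanish: at $u=0$, $S_{p,j}(0)=0$ by \eqref{eq:Spj-vanishes}, and at $u=\theta-\kappa$, $\delta(\kappa)=\tfrac12\int_0^\kappa\distr(v)\,dv=0$ because $\distr$ is supported in $[\kappa,\lambda]$. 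Substituting everything back into \eqref{eq:delta-relation} and solving for $S_{p,1}(\theta)$ produces \eqref{eq:Sp1-expansion}; the $\error$ term is inherited verbatim from \eqref{eq:delta-relation}, and uniformity on $[\kappa,C]$ follows from the uniformity there.

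The main technical point is justifying the Stieltjes manipulations and the interchange of sum and integral. The latter is harmless because \eqref{eq:Spj-vanishes} forces $S_{p,j}(u)=0$ for $u<j\kappa$, so only indices $j\le C/\kappa$ contribute to either the discrete sum or the integrated sum, and the whole argument reduces to a finite rearrangement. The integration-by-parts step itself is routine since $\delta(\theta-\cdot)$ is continuous while $S_{p,j}$ has only finitely many jumps on $[0,\theta-\kappa]$, so no subtle issue with shared discontinuities arises.
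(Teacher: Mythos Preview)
Your proof is correct and follows essentially the same route as the paper: decompose both sums in \eqref{eq:delta-relation} according to $j=\omega(k)$, rewrite the $\delta$-weighted sum as a Stieltjes integral against $dS_{p,j}$, integrate by parts, and substitute back. The paper is terser---it writes the integration by parts directly as $\int_0^{\theta-\kappa}\delta(\theta-u)\,dS_{p,j}(u)=-\int_0^{\theta-\kappa}S_{p,j}(u)\,d\delta(\theta-u)$ without isolating the boundary terms---whereas you make explicit why those terms vanish and why only finitely many $j$ contribute; these are welcome clarifications but not a different argument.
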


\begin{proof}
Since $\delta(\theta-u)=0$ for $u\in[\theta-\kappa,\theta]$,
using \eqref{eq:intrho} we have for all $j\in\N$:
\begin{align*}
&\sum_{\substack{k\le
p^{\theta-\kappa},~k\in\sK_p\\\omega(k)=j}}\delta
\big(\theta-\tfrac{\log
k}{\log p}\big)k^{-1}
=\int_0^{\theta-\kappa}\delta(\theta-u)\,dS_{p,j}(u)\\
&\qquad\qquad=-\int_0^{\theta-\kappa} S_{p,j}(u)\,d\delta(\theta-u)
=\frac12\int_0^{\theta-\kappa} S_{p,j}(u)\,\distr(\theta-u)\,du,
\end{align*}
where the integrals are of Riemann-Stieltjes type (note that these
integrals are well-defined since
$S_{p,j}$ is of bounded variation). The result now follows by
inserting this expression into~\eqref{eq:delta-relation}.
\end{proof}

\medskip

\begin{remark} Using \eqref{eq:Spj-vanishes} one sees that both sums
in~\eqref{eq:Sp1-expansion} have only finitely many nonzero terms,
the number of such terms being bounded by a constant that depends
only on $\kappa,\lambda,\distr,C$.
\end{remark}

\medskip

\begin{lemma}
\label{lem:Spj}
Uniformly for $j\in\N$ and $\theta\in[0,C]$ we have
$$
S_{p,j}(\theta)=\frac{1}{j}\sum_{\substack{q\le p^\theta\\
(q|p)=-1}}S_{p,j-1}\big(\theta-\tfrac{\log q}{\log
p}\big)\,q^{-1}+\error.
$$
\end{lemma}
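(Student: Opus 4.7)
The plan is to use a double-counting argument keyed to the multiplicative structure of the integers $k$ appearing in the definition of $S_{p,j}(\theta)$. Since each such $k$ is squarefree with exactly $j$ distinct prime divisors, writing $j=\sum_{q\mid k}1$ yields
$$
jS_{p,j}(\theta) = \sum_{\substack{k\le p^\theta,\,k\in\sK_p\\\omega(k)=j}}\sum_{q\mid k}\frac{1}{k}.
$$
I would then swap the order of summation and parametrize each pair $(q,k)$ with $q\mid k$ by the prime nonresidue $q$ and the cofactor $m:=k/q$; since $k$ is squarefree with every prime factor a nonresidue, such an $m$ lies in $\sK_p\cup\{1\}$, has $\omega(m)=j-1$, is at most $p^\theta/q$, and is coprime to $q$ (the last condition being exactly squarefreeness of $k=qm$).

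Dropping the condition $q\nmid m$, the inner sum becomes precisely $S_{p,j-1}(\theta-\log q/\log p)$ under the natural convention $S_{p,0}\equiv 1$ on $[0,\infty)$, which interprets the empty product $m=1$ as contributing a single unit and handles the boundary case $j=1$. The overcount consists of pairs with $q\mid m$; writing $m=qm'$ with $m'\in\sK_p\cup\{1\}$ squarefree of $\omega(m')=j-2$ and $q\nmid m'$, its total contribution is bounded by
$$
\sum_{\substack{q\le p^\theta\\(q|p)=-1}}\frac{1}{q^2}\sum_{\substack{m'\le p^\theta/q^2\\m'\in\sK_p\cup\{1\},\,\omega(m')=j-2}}\frac{1}{m'}\ll\sum_{q>p^\kappa}\frac{1}{q^2}\ll p^{-\kappa}=\error,
$$
where I would apply the uniform $O(1)$ bound \eqref{eq:Spj-bound-above} (equivalently \eqref{eq:Mertens}) to the inner sum and use $n_0(p)\ge p^\kappa$ to restrict prime nonresidues to $q>p^\kappa$. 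Dividing through by $j$ then yields the claimed identity.

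The only genuine technical point is ensuring that this overcount is uniformly $\error$ across all $j\in\N$ and $\theta\in[0,C]$; this is precisely where the Burgess-type hypothesis $n_0(p)\ge p^\kappa$ is indispensable, since it forces the tail sum $\sum_{q>p^\kappa}q^{-2}$ to vanish as $p\to\infty$ within $\sP$. The boundary case $j=1$ requires no correction at all, since $\omega(m)=0$ forces $m=1$ and then $q\mid m$ is impossible; the formula reduces to the tautological restatement $S_{p,1}(\theta)=\sum_{(q|p)=-1,\,q\le p^\theta}q^{-1}$ of the definition.
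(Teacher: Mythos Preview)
Your argument is correct and in fact more direct than the paper's. The paper first establishes an intermediate representation
\[
S_{p,j}(\theta)=\frac{1}{j!}\sum_{\substack{(q_1,\ldots,q_j)\\(q_i|p)=-1~\forall i\\q_1\cdots q_j\le p^\theta}}\frac{1}{q_1\cdots q_j}+\error,
\]
where the sum now runs over \emph{ordered} tuples of (not necessarily distinct) prime nonresidues; the squarefree constraint is handled by bounding the contribution of tuples with a repeated prime, much as you do. The recursion then follows by peeling off $q_1$ and invoking the same tuple identity for $j-1$. Your approach bypasses this detour entirely: the identity $j=\sum_{q\mid k}1$ lets you pull out a prime factor directly from the definition of $S_{p,j}$, and the only correction needed is the single overcount from dropping $q\nmid m$, which you bound cleanly. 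The paper's route is perhaps more symmetric (both sides are approximated by the same tuple object), but yours is shorter and requires only one error estimate rather than two applications of the tuple formula. Both rely on the same essential ingredients: the uniform bound \eqref{eq:Spj-bound-above} and the tail estimate $\sum_{q>p^\kappa}q^{-2}\ll p^{-\kappa}$ coming from the hypothesis $n_0(p)\ge p^\kappa$.
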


\begin{proof}
For any natural number $k$, let $\omega(k)$ be the number of distinct
prime divisors of $k$, and let $\Omega(k)$ be the total number of primes
dividing $k$, counted with multiplicity.

We first show that for $j\in\N$ and $\theta\in[0,C]$ the estimate
\begin{equation}
\label{eq:Spj-relation} S_{p,j}(\theta)=
\frac{1}{j!}\sum_{\substack{(q_1,\ldots,q_j)\\
(q_i|p)=-1~\forall\,i\\q_1\cdots q_j\le
p^\theta}}\frac{1}{q_1\cdots q_j}+\error
\end{equation}
holds uniformly.  We can assume that $j\le C/\kappa$, for otherwise
the sum is empty and thus \eqref{eq:Spj-relation}
follows immediately from \eqref{eq:Spj-vanishes}.

For each $p\in\sP$ let $\cL_p$ be the set of integers $k>1$
such that $(q|p)=-1$ for every prime divisor $q$ of $k$; note
that a number $k\in\cL_p$ lies in $\sK_p$ if and only if $k$ is
squarefree. Let $r_j(k)$ be the number of
ordered $j$-tuples $(q_1,\ldots,q_j)$ of primes such that
$q_1\cdots q_j=k$ and $(q_i|p)=-1$ for each~$i$. Then
\begin{itemize}
\item $0\le r_j(k)\le j!$;

\item $r_j(k)\ne 0$ if and only if $k\in\cL_p$ and $\Omega(k)=j$;

\item $r_j(k)=j!$ if and only if $k\in\sK_p$ and $\omega(k)=j$.
\end{itemize}
These properties imply that
$$
\sum_{\substack{(q_1,\ldots,q_j)\\
(q_i|p)=-1~\forall\,i\\q_1\cdots q_j\le
p^\theta}}\frac{1}{q_1\cdots q_j}=j!\sum_{\substack{k\le
p^\theta,~k\in\sK_p\\\omega(k)=j}}k^{-1}+\sum_{\substack{k\le
p^\theta,~k\in\cL_p\setminus\sK_p\\\Omega(k)=j}}r_j(k)k^{-1}.
$$
Dividing both sides by $j!$ and recalling that $j\le C/\kappa$, we see that
\begin{equation*}
\begin{split}
&\bigg|S_{p,j}(\theta)
-\frac{1}{j!}\sum_{\substack{(q_1,\ldots,q_j)\\
(q_i|p)=-1~\forall\,i\\q_1\cdots q_j\le
p^\theta}}\frac{1}{q_1\cdots q_j}\bigg|\le
\fl{C/\kappa}!\sum_{\substack{k\le
p^\theta\\k\in\cL_p\setminus\sK_p}}k^{-1}\ll
\sum_{p^\kappa<q\le
p^\theta}\sum_{\substack{n\le p^\theta\\q^2\,\mid\,n}}n^{-1}\\
&\qquad\qquad\ll \sum_{q>p^\kappa}q^{-2}\,\log(p^\theta)\ll
p^{-\kappa}\log(p^\theta)=\error,
\end{split}
\end{equation*}
which yields \eqref{eq:Spj-relation}.

To complete the proof, we suppose that $\theta\in[0,C]$ and apply~\eqref{eq:Spj-relation}
with both $j$ and $j-1$  to derive that
\begin{equation*}
\begin{split}
S_{p,j}(\theta)&=
\frac{1}{j!}\sum_{\substack{(q_1,\ldots,q_j)\\
(q_i|p)=-1~\forall\,i\\q_1\cdots q_j\le
p^\theta}}\frac{1}{q_1\cdots
q_j}+\error\\
&=\frac{1}{j!}\sum_{\substack{q_1\le p^\theta\\
(q_1|p)=-1}}\frac{1}{q_1}\sum_{\substack{(q_2,\ldots,q_j)\\
(q_i|p)=-1~\forall\,i\\q_2\cdots q_j\le
p^\theta/q_1}}\frac{1}{q_2\cdots
q_j}+\error\\
&=\frac{1}{j}\sum_{\substack{q\le p^\theta\\
(q|p)=-1}}\big(S_{p,j-1}\big(\theta-\tfrac{\log q}{\log
p}\big)+\error\big)q^{-1}+\error\\
&=\frac{1}{j}\sum_{\substack{q\le p^\theta\\
(q|p)=-1}}S_{p,j-1}\big(\theta-\tfrac{\log q}{\log
p}\big)q^{-1}+\error,
\end{split}
\end{equation*}
where we have used Mertens' theorem in the final step.
\end{proof}

\subsection{The functions $\{S_j\}$}

\begin{proposition}
\label{prop:continuity} We have
\begin{itemize}
\item[$(i)$] The limits
$$
S_j(\theta):=\lim_{\substack{p\to\infty\\p\in\sP}}S_{p,j}(\theta)
\qquad(j\ge 0,~\theta\in[0,C])
$$
exist, and the estimate $S_j(\theta)=S_{p,j}(\theta)+\error$ holds uniformly
for $j\ge 0$ and $\theta\in[0,C]$;

\item[$(ii)$] Each function $S_j$ is continuous at $\theta\in[0,C]$.
\end{itemize}
\end{proposition}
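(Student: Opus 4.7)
The plan is to prove (i) and (ii) simultaneously by induction on $j$, using Lemma~\ref{lem:Spj} as the inductive step and combining it with Lemma~\ref{lem:Sp1} to handle the crucial base case $j=1$. The cases $j=0$ and $j > C/\kappa$ are trivial: the relevant $S_{p,j}$ is identically constant in the first and identically zero on $[0,C]$ in the second (by \eqref{eq:Spj-vanishes}), so $S_j$ exists and is continuous for free. Only the substantive range $1 \le j \le \lfloor C/\kappa \rfloor$ requires work.

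For the base case, set $T_p := S_{p,1}$. The family $\{T_p\}_{p\in\sP}$ is nondecreasing, supported in $[\kappa,C]$, and uniformly bounded by \eqref{eq:Spj-bound-above}, so Helly's selection theorem produces a subsequence $T_{p_n} \to T$ pointwise on $[0,C]$ with $T$ bounded and nondecreasing. To pin down $T$ uniquely, I iterate Lemma~\ref{lem:Spj} (equivalently, I invoke formula \eqref{eq:Spj-relation} from its proof) to obtain the Stieltjes self-convolution representation
\begin{equation*}
S_{p,j}(\theta) = \tfrac{1}{j!}\,T_p^{*j}(\theta) + \error \qquad (j \ge 2),
\end{equation*}
and substitute this into Lemma~\ref{lem:Sp1}. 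With $J := \lceil C/\kappa \rceil$ (making all sums finite) this yields the Volterra-type equation
\begin{equation*}
T_p(\theta) = \delta(\theta) - \sum_{\substack{3 \le j \le J\\ j \text{ odd}}} \frac{T_p^{*j}(\theta)}{j!} + \frac{1}{2}\sum_{j=1}^{J} \int_0^{\theta - \kappa} \frac{T_p^{*j}(u)}{j!}\,\distr(\theta - u)\,du + \error,
\end{equation*}
which passes to the same equation (without $\error$) on the subsequential limit $T$.

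The limit equation is genuinely Volterra: since $T$ vanishes on $[0,\kappa)$, the self-convolution $T^{*j}(\theta)$ depends only on $T|_{[0,\theta-(j-1)\kappa]}$ and vanishes for $\theta < j\kappa$. Hence $T$ is uniquely determined by bootstrapping on successive windows $[n\kappa, (n+1)\kappa]$, of which finitely many cover $[0,C]$. Uniqueness forces the whole family $\{T_p\}_{p\in\sP}$ to converge pointwise to $T$; continuity of $T$ is read off from the RHS of its defining equation (continuous in $\theta$ by continuity of $\delta$, $\distr$, and the bounded-variation Stieltjes self-convolutions of $T$); and a Dini-type argument for monotone functions converging pointwise to a continuous limit upgrades pointwise to uniform convergence, delivering the statement $S_1 = T_p + \error$ of (i) together with (ii) for $j = 1$.

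For the inductive step $j \ge 2$, assume $S_{p,j-1} \to S_{j-1}$ uniformly on $[0,C]$ with $S_{j-1}$ continuous. Rewriting Lemma~\ref{lem:Spj} as a Stieltjes integral
\begin{equation*}
S_{p,j}(\theta) = \frac{1}{j}\int_0^\theta S_{p,j-1}(\theta - u)\,dT_p(u) + \error,
\end{equation*}
I pass to the limit using weak-$*$ convergence $dT_p \to dS_1$ (Helly--Bray) and the uniform continuity of $S_{j-1}$, defining $S_j(\theta) := \tfrac{1}{j}\int_0^\theta S_{j-1}(\theta-u)\,dS_1(u)$; the uniform convergence $S_{p,j} \to S_j$ and continuity of $S_j$ then follow routinely. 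The main obstacle is the base case --- specifically, uniqueness of the limit $T$ --- since this is what converts subsequential convergence into convergence of the whole family. The key ingredient enabling uniqueness is the Volterra structure: $T^{*j}(\theta)$ is determined by $T$ only up to argument $\theta-(j-1)\kappa$, which makes the stepwise bootstrapping on $\kappa$-windows possible.
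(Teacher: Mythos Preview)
Your approach is correct but organized quite differently from the paper's.  The paper does \emph{not} induct on $j$; it inducts on the length of the interval, setting $C_\ell:=\ell\kappa$ and proving (i)--(ii) simultaneously for all $j$ on $[0,C_\ell]$.  In the inductive step it first treats $j\ge 2$ on $[0,C_{\ell+1}]$ (Lemma~\ref{lem:Spj} reduces $S_{p,j}(\theta)$ to values of $S_{j-1}$ and $S_1$ on $[0,C_\ell]$, which are known by hypothesis), and only then treats $j=1$ via Lemma~\ref{lem:Sp1}, which now involves the just-established $S_j$ for odd $j\ge 3$ at~$\theta$ together with integrals over $[0,\theta-\kappa]\subset[0,C_\ell]$.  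No compactness is used: at every stage the limit is written down explicitly, and continuity is verified by a direct $\eps/3$ argument using that each $S_{p,j}$ is a step function with jump sizes $<p^{-\kappa}$.

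Your route instead extracts a subsequential limit $T$ of $T_p=S_{p,1}$ via Helly, passes Lemma~\ref{lem:Sp1} (with $S_{p,j}=\tfrac{1}{j!}T_p^{*j}+\error$) to a Volterra equation for $T$, and then recovers uniqueness---and hence full convergence---by bootstrapping on $\kappa$-windows.  That window-bootstrapping is exactly the mechanism the paper uses as its \emph{outer} induction; you have pushed it inside a uniqueness argument and wrapped it in Helly and a Dini/P\'olya upgrade.  The trade-off: the paper's argument is more elementary and fully constructive (the $S_j$ are defined by explicit integral formulas, no selection principle needed), while yours isolates the Volterra structure cleanly and would adapt more readily to situations where a direct recursion is unavailable.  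One caution worth recording in your write-up: the passage to the limit in the self-convolutions $T_{p_n}^{*j}\to T^{*j}$ is initially only guaranteed at continuity points, so the continuity of $T$ should itself be established window-by-window (using that $T$ is nondecreasing and agrees with a continuous right-hand side on a dense set) before claiming the Volterra equation holds everywhere.
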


\begin{proof}
For $j=0$ there is nothing to prove, so we assume that $j\in\N$ in what follows.
For each $\ell\in\N$ let $\cI_\ell$ be the interval $[0,C_\ell]$,
where $C_\ell:=\ell\kappa$. By induction on $\ell$
we show that both statements hold when $C=C_\ell$, the case $\ell=1$ being
an immediate consequence of \eqref{eq:Spj-vanishes}.

Now suppose that $(i)$ and $(ii)$ hold with $\ell\in\N$ and $C=C_\ell$,
and let $j\in\N$ and $\theta\in\cI_{\ell+1}$ be fixed.
If $\theta<j\kappa$, then $S_{p,j}(\theta)=S_j(\theta)=0$ for all $p\in\sP$
by~\eqref{eq:Spj-vanishes}; thus, we can assume without loss of generality
that $\theta\ge j\kappa$.

First, consider the case that $j\ge 2$.
As $n_0(p)>p^\kappa$, Lemma~\ref{lem:Spj} implies that
$$
S_{p,j}(\theta)=\frac{1}{j}\sum_{\substack{p^\kappa<q\le p^\theta\\
(q|p)=-1}}q^{-1}S_{p,j-1}\big(\theta-\tfrac{\log q}{\log
p}\big)+\error.
$$
For any prime $q\in(p^\kappa,p^\theta]$ we have $\theta-\tfrac{\log q}{\log
p}\in\cI_\ell\,$; therefore, using $(i)$ with $C=C_\ell$ together
with~\eqref{eq:Spj-bound-above} and~\eqref{eq:Spj-vanishes}
we derive that
\begin{equation*}
\begin{split}
S_{p,j}(\theta)&=\frac{1}{j}\sum_{\substack{p^\kappa<q\le p^\theta\\
(q|p)=-1}}q^{-1}\(S_{j-1}\big(\theta-\tfrac{\log q}{\log
p}\big)+\error\)+\error\\
&=\frac{1}{j}\sum_{\substack{p^\kappa<q\le p^\theta\\
(q|p)=-1}}q^{-1}S_{j-1}\big(\theta-\tfrac{\log q}{\log
p}\big)+\error\\
&=\frac{1}{j}\sum_{\substack{p^\kappa<q\le p^{\theta-(j-1)\kappa}\\
(q|p)=-1}}q^{-1}S_{j-1}\big(\theta-\tfrac{\log q}{\log
p}\big)+\error,
\end{split}
\end{equation*}
where we have used \eqref{eq:Spj-vanishes} 
with $j-1$ in place of $j$ to reduce the range of $q$ in the last sum.
By $(ii)$ with $C=C_\ell$, the function
$S_{j-1}$ is continuous on $\cI_\ell$, and therefore the last sum
can be expressed as a Riemann-Stieltjes integral:
\begin{equation*}
\begin{split}
\int_\kappa^{\theta-(j-1)\kappa}
S_{j-1}(\theta-u)\,dS_{p,1}(u)=-\int_\kappa^{\theta-(j-1)\kappa}
S_{p,1}(u)\,dS_{j-1}(\theta-u).
\end{split}
\end{equation*}
Since $j\ge 2$ and $\theta\le C_{\ell+1}$,
one verifies that $u\in \cI_\ell$ for all values
of $u$ in these integrals; hence, using $(ii)$ with $C=C_\ell$ along
with~\eqref{eq:Spj-bound-above} and~\eqref{eq:Spj-vanishes} we have
\begin{equation*}
\begin{split}
\int_\kappa^{\theta-(j-1)\kappa} S_{p,1}(u)\,dS_{j-1}(\theta-u)
&=\int_\kappa^{\theta-(j-1)\kappa} (S_1(u)+\error)\,dS_{j-1}(\theta-u)\\
&=\int_\kappa^{\theta-(j-1)\kappa} S_1(u)\,dS_{j-1}(\theta-u) +\error\\
&=-\int_\kappa^{\theta-(j-1)\kappa} S_{j-1}(\theta-u)\,dS_1(u) +\error.
\end{split}
\end{equation*}
Putting everything together, we have shown that
$$
S_{p,j}(\theta)=\frac{1}{j}\int_\kappa^{\theta-(j-1)\kappa}
S_{j-1}(\theta-t)\,dS_1(t) +\error.
$$
This proves $(i)$ for $C=C_{\ell+1}$ in the case that $j\ge 2$.
Considering separately the cases $\theta\le j\kappa$ and
$\theta>j\kappa$, we have established the following relation:
$$
S_j(\theta)=\frac{1}{j}\int_0^\theta
S_{j-1}(\theta-t)\,dS_1(t) \qquad(j\ge 2,~\theta\in\cI_{\ell+1}).
$$

Next, we prove $(ii)$ for $j\ge 2$ and $\theta\in\cI_{\ell+1}$.
Let $\eps>0$ be given, and suppose that $p\in\sP$ is large enough
so that
$$
p^{-\kappa}<\eps/3\mand
\big|S_j(\theta)-S_{p,j}(\theta)\big|<\eps/3.
$$
Each $S_{p,j}$ is a step function, and the size of the
step at an integer $k>n_0(p)$ is $k^{-1}<p^{-\kappa}<\eps/3$.
Since the steps occur on a discrete subset of the real line,
it follows that the bound
$$
\big|S_{p,j}(\theta)-S_{p,j}(\theta')\big|<\eps/3
$$
holds for all $\theta'$ in a small neighborhood of $\theta$.
Hence, if $\theta'\in\cI_{\ell+1}$ is sufficiently close to $\theta$, then
$$
\big|S_j(\theta)-S_j(\theta')\big|
\le \big|S_j(\theta)-S_{p,j}(\theta)\big|+
\big|S_{p,j}(\theta)-S_{p,j}(\theta')\big|+
\big|S_{p,j}(\theta')-S_j(\theta')\big|<\eps.
$$
Therefore, $S_j$ is continuous at $\theta$.

It remains to verify $(i)$ and $(ii)$ for the case $j=1$. Since
$\theta\ge\kappa$, we can apply Lemma~\ref{lem:Sp1}; in view of the
remark that follows Lemma~\ref{lem:Sp1}, both sums in \eqref{eq:Sp1-expansion}
have at most finitely many nonzero terms, hence we derive that
$$
S_{p,1}(\theta)=\delta(\theta)-\sum_{\text{\rm odd~}j\ge 3}
S_j(\theta)+\frac12\sum_{j\in\N}
\int_0^{\theta-\kappa} S_j(u)\,\distr(\theta-u)\,du+\error,
$$
which implies $(i)$ for $C=C_{\ell+1}$.  This relation can also be used to
prove $(ii)$, or one can use an argument identical to the one
given above for the case $j\ge 2$.  This completes the induction,
and the proposition is proved.
\end{proof}

The next corollary follows immediately from the statement and proof
of Proposition~\ref{prop:continuity}; we omit the details.

\begin{corollary}
For any $j\in\N$ the limit
$$
S_j(\theta):=\lim_{\substack{p\to\infty\\p\in\sP}}S_{p,j}(\theta)
$$
exists and is finite for all $\theta\ge 0$, the function $S_j$ is
continuous on $[0,\infty)$, and the following relations hold for all
$\theta\ge 0$:
\begin{align}
\label{eq:Sj-reln-one} S_j(\theta)&=\frac{1}{j}\int_0^\theta
S_{j-1}(\theta-u)\,dS_1(u),\\
\label{eq:S1-reln-two} \delta(\theta)&=
\sum_{\text{\rm odd~}j\in\N}S_j(\theta)-\frac12\sum_{j\in\N}
\int_0^{\theta} S_j(u)\,\distr(\theta-u)\,du.
\end{align}
\end{corollary}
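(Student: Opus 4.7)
The plan is to leverage the fact that the constant $C$ in Proposition~\ref{prop:continuity} was arbitrary. Given any $\theta\ge 0$, I would fix some $C>\theta$ and apply Proposition~\ref{prop:continuity} on the interval $[0,C]$: this already delivers existence of $S_j(\theta)$ as a finite limit, the uniform bound $S_{p,j}(\theta)=S_j(\theta)+\error$, and continuity at $\theta$ (continuity being a local property, it does not matter that we shrink the interval as $\theta$ varies). Since $\theta$ was arbitrary, $S_j$ is defined and continuous on all of $[0,\infty)$.

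For the functional equation \eqref{eq:Sj-reln-one}, I would revisit the inductive step in the proof of Proposition~\ref{prop:continuity}: there, for $j\ge 2$ and $\theta\in\cI_{\ell+1}$, Lemma~\ref{lem:Spj} was massaged into a Riemann-Stieltjes identity of the form
$$
S_{p,j}(\theta)=\frac{1}{j}\int_\kappa^{\theta-(j-1)\kappa}S_{j-1}(\theta-u)\,dS_1(u)+\error,
$$
and we saw that the range of integration may be enlarged to $[0,\theta]$ using \eqref{eq:Spj-vanishes}. Passing to the limit $p\to\infty$ along $\sP$ and using the uniform convergence already established gives \eqref{eq:Sj-reln-one} for every $\theta$ in the current interval, hence (by letting $C\to\infty$) for every $\theta\ge 0$. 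The case $j=1$ is vacuous (both sides can be interpreted as $S_1(\theta)$), and for $j\ge 2$ with $\theta\le j\kappa$ both sides vanish by \eqref{eq:Spj-vanishes}.

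For the identity \eqref{eq:S1-reln-two}, I would take the limit in Lemma~\ref{lem:Sp1}. The remark following that lemma guarantees the two sums have only finitely many nonzero terms, with the number of terms bounded in terms of $\kappa,\lambda,\distr,C$, so uniform convergence $S_{p,j}\to S_j$ on $[0,C]$ (part (i) of Proposition~\ref{prop:continuity}) lets one interchange limit and summation, and continuity of $S_j$ (part (ii)) validates the Riemann-Stieltjes manipulation of $\int_0^{\theta-\kappa}S_{p,j}(u)\distr(\theta-u)\,du$. Rearranging the resulting equation, grouping $S_1(\theta)$ with the sum over odd $j\ge 3$ into a single sum over all odd $j\in\N$, and using the fact that $\distr$ is supported in $[\kappa,\lambda]$ (so $\distr(\theta-u)=0$ for $u\in(\theta-\kappa,\theta]$, which allows the integral to be extended from $[0,\theta-\kappa]$ to $[0,\theta]$ at no cost), yields \eqref{eq:S1-reln-two}.

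The argument is essentially bookkeeping, so I do not expect a substantive obstacle; the only mild subtlety is making sure that the Riemann-Stieltjes integrals interact properly with the uniform limits, but this is precisely what was already handled inside the proof of Proposition~\ref{prop:continuity} and requires no new idea.
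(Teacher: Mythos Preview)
Your proposal is correct and follows essentially the same approach as the paper, which simply states that the corollary ``follows immediately from the statement and proof of Proposition~\ref{prop:continuity}'' and omits the details. You have accurately identified and filled in those omitted details: letting $C\to\infty$ to extend to $[0,\infty)$, extracting \eqref{eq:Sj-reln-one} from the Riemann--Stieltjes identity already displayed in the inductive step of the proposition's proof, and deriving \eqref{eq:S1-reln-two} by passing to the limit in Lemma~\ref{lem:Sp1} and extending the range of integration using the support of $\distr$.
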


\subsection{Laplace transform}

In what follows, we use $\sigma$ and $t$ to denote the real and
imaginary parts of the complex number $s$, respectively.

For any locally integrable function $f$   on $[0,\infty)$, we denote
by $\sL_s(f)$ the Laplace transform of $f$, i.e.,
$$
\sL_s(f):=\int_0^\infty e^{-s\theta}f(\theta)\,d\theta.
$$
Here, $s$ is a complex variable for which the integral
converges absolutely.  For an excellent account of the theory of
the Laplace transform, the reader is referred Widder~\cite{Widder}.

\begin{lemma}
The Laplace integrals $\sL_s(\distr)$, $\sL_s(\delta)$
and $\{\sL_s(S_j):j\in\N\}$ converge absolutely in the region
$\{s\in\C:\sigma>0\}$.
\end{lemma}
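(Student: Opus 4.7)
The plan is to exhibit, for each of the three families in the lemma, a pointwise upper bound on $[0,\infty)$ that grows at most polylogarithmically in $\theta$, so that multiplication by $e^{-\sigma\theta}$ yields an integrable function whenever $\sigma>0$. Two of the three cases are immediate: since $\distr$ is supported in $[\kappa,\lambda]\subset(0,\infty)$, the integral $\sL_s(\distr)=\int_\kappa^\lambda e^{-s\theta}\distr(\theta)\,d\theta$ is entire in $s$; and since $\distr$ is a probability density, the formula \eqref{eq:intrho} gives the uniform bound $0\le\delta(\theta)\le\tfrac12$, so $\sL_s(\delta)$ converges absolutely for every $s$ with $\sigma>0$.

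The bulk of the argument is to control the growth of each $S_j$. The first step is to establish by induction on $j\ge 1$ the pointwise inequality
\[
0\le S_j(\theta)\le\frac{S_1(\theta)^j}{j!}\qquad(\theta\ge 0).
\]
The base case $j=1$ is trivial. The limit function $S_1$ is continuous and non-decreasing, the latter being immediate from the definition \eqref{eq:Spj-defn} of each prelimit $S_{p,1}$ and preserved under pointwise limits, so $dS_1$ defines a non-negative Borel measure on $[0,\infty)$. Applying the inductive hypothesis to the integrand of the recurrence \eqref{eq:Sj-reln-one} and using the monotonicity bound $S_1(\theta-u)\le S_1(\theta)$, I obtain
\[
S_j(\theta)\le\frac{1}{j(j-1)!}\int_0^\theta S_1(\theta)^{j-1}\,dS_1(u)=\frac{S_1(\theta)^j}{j!},
\]
which completes the induction.

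Having reduced the lemma to a growth bound on $S_1$, I would apply Mertens' theorem with explicit error term to the prelimit:
\[
S_{p,1}(\theta)\le\sum_{p^\kappa<q\le p^\theta}\frac{1}{q}=\log(\theta/\kappa)+O\!\left(\frac{1}{\log p}\right)\qquad(\theta\ge\kappa),
\]
where the implied constant is absolute. Passing to the limit $p\to\infty$ along $\sP$ yields $S_1(\theta)\le\log(\theta/\kappa)$ for $\theta\ge\kappa$; combined with the vanishing $S_j(\theta)=0$ on $[0,j\kappa]$ inherited from \eqref{eq:Spj-vanishes}, this gives $S_j(\theta)\le(\log(\theta/\kappa))^j/j!$ for $\theta\ge j\kappa$. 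Since this bound is polylogarithmic in $\theta$, the integral $\int_0^\infty e^{-\sigma\theta}|S_j(\theta)|\,d\theta$ converges absolutely for every $\sigma>0$. The main obstacle is really only the polylogarithmic bound on $S_1$; the key point is that the error in Mertens is uniform in $\theta$, so the inequality survives the passage to the limit along $\sP$, and the rest is a routine consequence of the recurrence.
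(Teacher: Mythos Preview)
Your proof is correct and follows essentially the same plan as the paper: the cases of $\distr$ and $\delta$ are immediate from boundedness, and for $S_j$ one obtains a polylogarithmic bound via Mertens' theorem. The only difference is in how the bound on $S_j$ for $j\ge 2$ is reached. The paper bounds the prelimit directly by
\[
S_{p,j}(\theta)\le\bigg(\sum_{p^\kappa<q\le p^\theta}q^{-1}\bigg)^j,
\]
using only the combinatorial description of $\sK_p$, and then passes to the limit to obtain $S_j(\theta)\le(\log(\theta/\kappa))^j$. You instead work entirely with the limit functions, invoking the recurrence \eqref{eq:Sj-reln-one} and monotonicity of $S_1$ to prove inductively that $S_j(\theta)\le S_1(\theta)^j/j!$, and then bound $S_1$ by Mertens. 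Your route gains a factor of $1/j!$ (not needed here) at the cost of appealing to the continuity and recurrence established in Proposition~\ref{prop:continuity} and its corollary; the paper's one-line prelimit bound is more self-contained and slightly more economical.
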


\begin{proof}
Since $\distr$ and $\delta$ are bounded on $[0,\infty)$, 
the integrals $\sL_s(\distr)$ and $\sL_s(\delta)$
converge when $\sigma>0$.

With Mertens' theorem we can bound
$$
S_{p,j}(\theta)\le \bigg(\,\sum_{p^\kappa<q\le p^\theta}q^{-1}\bigg)^j
\le\big(\log(\theta/\kappa)+O((\log p)^{-1})\big)^j\qquad(q\in\sP).
$$
Letting $p\to\infty$ we derive the bound
\begin{equation}
\label{eq:Sj-triv-bound} S_j(\theta)\le \(\log(\theta/\kappa)\)^j
\qquad(\theta\ge\kappa),
\end{equation}
and it follows that the integrals $\{\sL_s(S_j):j\in\N\}$ converge
when $\sigma>0$.
\end{proof}

\begin{lemma}
\label{lem:delta-laplace}
There is a constant $C>0$ that depends only on $\kappa,\lambda,\distr$
such that the inequality
\begin{equation}
\label{eq:frosty}
\max\big\{|s\,\sL_s(S_1)|,|\sL_s(\distr)|\big\}<1
\end{equation}
holds everywhere in the region
\begin{equation}
\label{eq:regionR}
\cR:=\{s\in\C:\sigma>C,~|t|<\tfrac12\kappa^{1/2}\sigma^{3/2}\},
\end{equation}
and we have
\begin{equation}
\label{eq:funny-identity}
\sL_s(S_1)=\frac{1}{2s}\sum_{n\in\N}\frac{\sL_s(\distr)^n}{n}
\qquad(s\in\cR).
\end{equation}
\end{lemma}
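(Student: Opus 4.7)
The plan is to verify the two bounds in \eqref{eq:frosty} separately from the identity \eqref{eq:funny-identity}, and then derive the identity by taking Laplace transforms of the two relations \eqref{eq:Sj-reln-one} and \eqref{eq:S1-reln-two} in the preceding corollary.

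For the bound on $|\sL_s(\distr)|$, the compact support of $\distr$ in $[\kappa,\lambda]$ and unit total mass give the trivial estimate $|\sL_s(\distr)|\le\int_\kappa^\lambda e^{-\sigma\theta}\distr(\theta)\,d\theta\le e^{-\sigma\kappa}$, which is $<1$ once $\sigma$ exceeds an absolute constant. For the bound on $|s\sL_s(S_1)|$ I would combine the estimate $S_1(\theta)\le\log(\theta/\kappa)$ from \eqref{eq:Sj-triv-bound} on $[\kappa,\infty)$ (with $S_1\equiv 0$ on $[0,\kappa)$ by \eqref{eq:Spj-vanishes}) with the inequality $\log(1+x)\le x$; after the substitution $\theta=\kappa+y/\sigma$ one obtains $|\sL_s(S_1)|=O(e^{-\sigma\kappa}/(\kappa\sigma^2))$. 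Within $\cR$, once $\sigma\ge 4/\kappa$ one has $|s|^2=\sigma^2+t^2<\tfrac12\kappa\sigma^3$, so $|s|<(\kappa/2)^{1/2}\sigma^{3/2}$, and multiplying yields $|s\sL_s(S_1)|=O\bigl((2\kappa)^{-1/2}\sigma^{-1/2}e^{-\sigma\kappa}\bigr)$, which is $<1$ for $\sigma>C$ with a suitable $C$ depending only on $\kappa,\lambda,\distr$. Precisely this tradeoff between the growth of $|s|$ and the exponential decay of $\sL_\sigma(S_1)$ dictates the shape $|t|<\tfrac12\kappa^{1/2}\sigma^{3/2}$ of $\cR$.

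For the identity, set $u:=s\sL_s(S_1)$. Taking the Laplace transform of \eqref{eq:Sj-reln-one}, Fubini's theorem (for $\sigma>0$ real all integrands are nonnegative, so Tonelli applies) and integration by parts on $\int_0^\infty e^{-su}\,dS_1(u)=s\sL_s(S_1)$ (valid since $S_1$ is continuous, $S_1(0)=0$, and $S_1(u)=O(\log u)$) yield the recurrence $\sL_s(S_j)=\tfrac{1}{j}\sL_s(S_{j-1})\cdot u$ for $j\ge 2$. Combined with the defining relation $\sL_s(S_1)=u/s$, an easy induction gives the closed form $\sL_s(S_j)=u^j/(j!\,s)$ for every $j\ge 1$. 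I would then take the Laplace transform of \eqref{eq:S1-reln-two}, using Tonelli again on the positive real axis to interchange the sums with the Laplace integral, together with the elementary identity $\sL_s(\delta)=\sL_s(\distr)/(2s)$ (immediate from \eqref{eq:intrho}); this produces
$$
\frac{\sL_s(\distr)}{2s}=\frac{\sinh u}{s}-\frac{\sL_s(\distr)}{2s}\bigl(e^u-1\bigr),
$$
which rearranges to the clean algebraic identity $\sL_s(\distr)=1-e^{-2u}$. Since $|\sL_s(\distr)|<1$ in $\cR$ by the first step, the principal branch of $-\tfrac12\log(1-\sL_s(\distr))$ admits its convergent Taylor series, yielding $u=\tfrac12\sum_{n\in\N}\sL_s(\distr)^n/n$; dividing by $s$ gives \eqref{eq:funny-identity}. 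Both sides are analytic in $\cR$ and agree on $\cR\cap\R$, so the identity extends throughout $\cR$ by analytic continuation.

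The main obstacle I anticipate is not the algebra, which is essentially a single equation once the Laplace-transformed recurrence is in hand, but rather the bookkeeping required to calibrate $C$ and the shape of $\cR$ so that both bounds in \eqref{eq:frosty} hold simultaneously, together with the careful Fubini/Tonelli justifications for the Stieltjes convolution and for the term-by-term Laplace transform in \eqref{eq:S1-reln-two}. Once the bounds are established on $\cR$, the branch of the logarithm is unambiguous and the series inversion proceeds without further difficulty.
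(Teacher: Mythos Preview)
Your proposal is correct and follows essentially the same approach as the paper: bound the two quantities in \eqref{eq:frosty} via the exponential decay coming from $S_1\equiv 0$ on $[0,\kappa)$ and the support of $\distr$, take Laplace transforms of \eqref{eq:Sj-reln-one} and \eqref{eq:S1-reln-two} to obtain $\sL_s(S_j)=u^j/(j!\,s)$ with $u=s\sL_s(S_1)$, solve the resulting algebraic equation for $u$, and expand the logarithm. The only cosmetic differences are that the paper bounds $|\sL_s(\distr)|$ indirectly via $|s\sL_s(\delta)|$ and uses the cruder estimate $e^{-\sigma\theta}\log(\theta/\kappa)\le C_1e^{-\sigma\theta/2}$ in place of your substitution $\theta=\kappa+y/\sigma$, arriving at the same conclusion.
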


\begin{proof}
Fix $s=\sigma+it$ in the region \eqref{eq:regionR}, and note that
\begin{equation}
\label{eq:rudolph}
|s|^2=\sigma^2+t^2<\sigma^2(1+\tfrac14\kappa\sigma)\le\sigma^2e^{\kappa\sigma/4}.
\end{equation}
Using \eqref{eq:Sj-triv-bound} with $j=1$ we have
$$
e^{-\sigma\theta}|S_1(\theta)|\le
e^{-\sigma\theta}\log(\theta/\kappa)
\le C_1 e^{-\sigma\theta/2}\qquad(\theta\ge\kappa)
$$
for some constant $C_1$ that depends only on $\kappa,\lambda,\distr$.
Taking into account that $S_1$ vanishes on $[0,\kappa]$ we have
$$
|\sL_s(S_1)|\le C_1\int_\kappa^\infty e^{-\sigma\theta/2}\,d\theta
=2C_1\sigma^{-1}e^{-\kappa\sigma/2},
$$
which together with \eqref{eq:rudolph} yields the bound
$$
|s\,\sL_s(S_1)|^2\le 4C_1^2 e^{-3\kappa\sigma/4}<4C_1^2e^{-3\kappa C/4}.
$$
Hence, if $C>2\kappa^{-1}\log(4C_1^2)$, then $|s\,\sL_s(S_1)|<1$.

Replacing $C$ by a larger constant, if necessary, the same method shows
that $|s\,\sL_s(\delta)|<1$, using the bound $\delta(\theta)\le\tfrac12$
for all $\theta\ge\kappa$ instead of \eqref{eq:Sj-triv-bound}. In view of the fact that
$s\,\sL_s(\delta)=\tfrac12\sL_s(\distr)$, the first statement is proved.

From \eqref{eq:Sj-reln-one} it follows that
$$
\sL_s(S_j)=\frac{s\sL(S_1)\sL_s(S_{j-1})}{j}\qquad(j\in\N).
$$
By induction on $j$ this leads to the relations
$$
\sL_s(S_j)=\frac{s^{j-1}\sL_s(S_1)^j}{j!}\qquad(j\in\N).
$$
From~\eqref{eq:S1-reln-two} we further deduce that
\begin{align*}
\sL_s(\delta)
&=\sum_{\text{\rm odd~}j\in\N}\sL_s(S_j)-\frac12\sum_{j\in\N}
\sL_s(S_j*\distr)\\
&=\sum_{\text{\rm odd~}j\in\N}\frac{s^{j-1}\sL_s(S_1)^j}{j!}
-\frac{\sL_s(\distr)}{2}\sum_{j\in\N}
\frac{s^{j-1}\sL_s(S_1)^j}{j!}.
\end{align*}
Note that the sums converge absolutely by \eqref{eq:frosty}.
From the previous relation it follows that
$$
\frac{\sL_s(\distr)}{2}=s\,\sL_s(\delta)
=\frac{1}{2}\(e^{s\,\sL_s(S_1)}-e^{-s\,\sL_s(S_1)}\)
-\frac{\sL_s(\distr)}{2}\(e^{s\,\sL_s(S_1)}-1\),
$$
which leads to
$$
\sL_s(S_1)=-\frac{1}{2s}\log\(1-\sL_s(\distr)\).
$$
Using \eqref{eq:frosty} and the Maclaurin series for $\log(1-u)$
we obtain \eqref{eq:funny-identity}.
\end{proof}

\begin{proposition}
\label{prop:S1-converted} For $\theta\ge 0$ we have
\begin{equation}
\label{eq:S1-converted} S_1(\theta)=\sum_{n\in\N}
\frac{(\delta*\distr^{*(n-1)})(\theta)}{n}\qquad(\theta\ge 0).
\end{equation}
\end{proposition}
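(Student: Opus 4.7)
The plan is to prove \eqref{eq:S1-converted} by working on the Laplace transform side and then inverting. Define
$$
G(\theta) := \sum_{n\in\N}\frac{(\delta * \distr^{*(n-1)})(\theta)}{n}\qquad (\theta\ge 0),
$$
with the convention that the $n=1$ term is $\delta(\theta)$. Because $\distr$ is supported in $[\kappa,\lambda]$ and $\delta$ vanishes on $[0,\kappa]$, the convolution $\delta*\distr^{*(n-1)}$ is supported in $[n\kappa,\infty)$; hence for each $\theta$ only finitely many terms contribute, so $G$ is a well-defined, nonnegative, continuous function of at most polynomial growth. The task reduces to showing $G=S_1$ on $[0,\infty)$.

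First I would rewrite the conclusion of Lemma~\ref{lem:delta-laplace}. Since $\delta(\theta)=\tfrac12\int_0^\theta \distr(u)\,du$, the convolution theorem gives $\sL_s(\distr)=2s\,\sL_s(\delta)$ in the half-plane $\sigma>0$. Substituting this into \eqref{eq:funny-identity} and applying the convolution theorem again yields
$$
\sL_s(S_1)=\sum_{n\in\N}\frac{\sL_s(\delta)\,\sL_s(\distr)^{n-1}}{n}=\sum_{n\in\N}\frac{\sL_s\!\left(\delta*\distr^{*(n-1)}\right)}{n}\qquad (s\in\cR).
$$

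Next I would compute $\sL_s(G)$ for real $s>C$ by interchanging sum and integral. All summands are nonnegative on $[0,\infty)$, so Tonelli's theorem applies and gives $\sL_s(G)=\sL_s(S_1)$ on this real ray. Both Laplace transforms are analytic in $\sigma>0$ (the transform of $S_1$ by the bound \eqref{eq:Sj-triv-bound}; the transform of $G$ by uniform convergence of its series on compact subsets of the half-plane, which follows from the inequality $|\sL_s(\distr)|<1$ of \eqref{eq:frosty}), so by analytic continuation the identity extends to the whole right half-plane.

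Finally, since $S_1$ and $G$ are both continuous on $[0,\infty)$ — continuity of $G$ is term-by-term, while continuity of $S_1$ is the content of the corollary following Proposition~\ref{prop:continuity} — the uniqueness theorem for the Laplace transform forces $S_1\equiv G$ on $[0,\infty)$, which is \eqref{eq:S1-converted}. I expect the main technical point to be the justification of the Tonelli interchange together with confirming analyticity of the series $\sL_s(G)$ in $\sigma>0$; both follow readily from the support bounds $\operatorname{supp}(\delta*\distr^{*(n-1)})\subset[n\kappa,\infty)$ combined with the elementary bound $|\sL_s(\distr)|<1$ already established in Lemma~\ref{lem:delta-laplace}.
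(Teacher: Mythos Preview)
Your proposal is correct and follows essentially the same route as the paper: define the right-hand side as a function (the paper calls it $T_1$), observe that the support condition makes it a locally finite sum of continuous functions, identify its Laplace transform with $\sL_s(S_1)$ via Lemma~\ref{lem:delta-laplace} and the relation $\sL_s(\distr)=2s\,\sL_s(\delta)$, and conclude by uniqueness of the Laplace transform together with continuity. One small remark: your appeal to \eqref{eq:frosty} to get analyticity of $\sL_s(G)$ on the whole half-plane is slightly misplaced since that bound is only asserted on $\cR$, but this does not matter---the logarithmic growth bound you already noted for $G$ gives analyticity directly, and in any case equality on the real ray $s>C$ already suffices for the uniqueness theorem (as in the paper's invocation of Widder).
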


\begin{proof}
Let $T_1$ be the function of $\theta$ defined by the
right side of \eqref{eq:S1-converted}.
As $\delta$ is continuous on $[0,\infty)$, the same is true of
$\delta*\distr^{*(n-1)}$ for each~$n$. Since $\distr^{*(n-1)}$ vanishes
for  $\theta\le (n-1)\kappa$, the same is also true for
$\delta*\distr^{*(n-1)}$; this implies that $T_1$ is the sum of finitely
many continuous functions on any compact interval in $[0,\infty)$, and thus
$T_1$ is continuous on all of $[0,\infty)$.  Since
$$
\sL_s(\delta*\distr^{*(n-1)})=\sL_s(\delta)\sL_s(\distr)^{n-1}
=\frac{\sL_s(\distr)^n}{2s}
$$
for all $n\in\N$ and all $s\in\C$ with $\sigma>0$, we have
$$
\sL_s(T_1)=\sum_{n\in\N}\frac{\sL_s(\delta*\distr^{*(n-1)})}{n}
=\frac{1}{2s}\sum_{n\in\N}\frac{\sL_s(\distr)^n}{n}
=\sL_s(S_1)\qquad(s\in\cR),
$$
where $\cR$ is the region \eqref{eq:regionR}.
Now $S_1$ and $T_1$ have the same Laplace transform on $\cR$, 
hence $S_1(\theta)=T_1(\theta)$ for all $\theta\ge 0$
except possibly on a set of Lebesgue measure zero (see, for
example, Widder~\cite[Theorem~6.3]{Widder}); as both functions are
continuous, we find that $S_1=T_1$ on $[0,\infty)$, and the
proposition has been proved.
\end{proof}

\begin{corollary}
\label{cor:S_1-differentiable} The function $S_1$ is continuously
differentiable on $(\lambda,\infty)$, and
$$
S'_1(\theta)=\frac{1}{2}\sum_{n\in\N}\frac{\distr^{*n}(\theta)}{n}\qquad(\theta>\lambda).
$$
\end{corollary}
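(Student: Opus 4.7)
The plan is to use Proposition~\ref{prop:S1-converted} to rewrite $S_1(\theta)$ as an antiderivative and then invoke the Fundamental Theorem of Calculus. The starting point is the observation that
$$
\delta(\theta)=\frac12\int_0^\theta\distr(u)\,du=\tfrac12\bigl(\mathbf 1_{[0,\infty)}*\distr\bigr)(\theta),
$$
so that for every $n\in\N$ one has the identity
$$
\delta*\distr^{*(n-1)}=\tfrac12\,\mathbf 1_{[0,\infty)}*\distr^{*n},
\qquad\text{i.e.,}\qquad
\bigl(\delta*\distr^{*(n-1)}\bigr)(\theta)=\tfrac12\int_0^\theta\distr^{*n}(u)\,du.
$$

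Substituting this into Proposition~\ref{prop:S1-converted} gives
$$
S_1(\theta)=\sum_{n\in\N}\frac{1}{n}\cdot\tfrac12\int_0^\theta\distr^{*n}(u)\,du.
$$
Since $\distr$ is supported in $[\kappa,\lambda]$, the convolution $\distr^{*n}$ is supported in $[n\kappa,n\lambda]$, so for any fixed $\theta$ only the finitely many indices $n\le \theta/\kappa$ contribute. This permits an unconditional interchange of the sum and the integral:
$$
S_1(\theta)=\tfrac12\int_0^\theta\sum_{n\in\N}\frac{\distr^{*n}(u)}{n}\,du
=\tfrac12\int_0^\theta F_\distr(u)\,du.
$$

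It remains to verify that $F_\distr$ is continuous on $(\lambda,\infty)$, after which the Fundamental Theorem of Calculus yields $S_1\in C^1(\lambda,\infty)$ together with the claimed formula $S_1'(\theta)=\tfrac12 F_\distr(\theta)$. For $\theta>\lambda$ the term $\distr(\theta)$ vanishes, so
$$
F_\distr(\theta)=\sum_{n\ge 2}\frac{\distr^{*n}(\theta)}{n},
$$
a finite sum (only $n\le\theta/\kappa$ can contribute). Each $\distr^{*n}$ with $n\ge 2$ is continuous on $\R$, being a convolution of a bounded compactly supported function with an integrable function; hence $F_\distr$ is continuous on $(\lambda,\infty)$.

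The only step requiring any care is the identity $\delta*\distr^{*(n-1)}=\tfrac12\mathbf 1_{[0,\infty)}*\distr^{*n}$, which follows immediately from $\delta=\tfrac12\mathbf 1_{[0,\infty)}*\distr$ and the associativity of convolution for functions in $L^1_{\mathrm{loc}}(\R_+)$ with support in $[0,\infty)$. Everything else reduces to manipulating finite sums, so no analytical obstacle arises.
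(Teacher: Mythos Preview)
Your proof is correct and follows essentially the same approach as the paper: both start from the locally finite sum in Proposition~\ref{prop:S1-converted} and differentiate term by term via the relation $\delta'=\tfrac12\distr$. Your version is slightly more explicit, writing $S_1(\theta)=\tfrac12\int_0^\theta F_\distr$ and invoking the Fundamental Theorem of Calculus directly, whereas the paper argues differentiability from the constancy of $\delta$ on $(\lambda,\infty)$ and then appeals to ``well known properties of the Laplace integral'' with details omitted.
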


\begin{proof}
Since $\distr^{*(n-1)}$ vanishes for  $\theta\le(n-1)\kappa$,
for any constant $C>0$ the relation \eqref{eq:S1-converted} implies that
\begin{equation}
\label{eq:S1-convertedC} S_1(\theta)=\sum_{n<1+C/\kappa}
\frac{(\delta*\distr^{*(n-1)})(\theta)}{n}\qquad(0\le\theta<C).
\end{equation}
As $\delta$ is constant (hence differentiable) on $(\lambda,\infty)$, 
it follows that the function $S_1$ is differentiable on $(\lambda,C)$;
taking $C\to\infty$ we obtain the first statement of the corollary.
The second statement follows from \eqref{eq:S1-convertedC}
using the relation $\delta'=\tfrac12\distr$ and well known properties
of the Laplace integral; we omit the details.
\end{proof}

\subsection{Two expressions for $S_1'(\theta)$}

Combining Theorem~\ref{thm:c-main} and Corollary~\ref{cor:S_1-differentiable}
we obtain the following statement.

\begin{proposition}
\label{prop:firstS1'}
Let $f(k):=\widehat\distr(k)-1$, and let $c>0$ be a real number such that
$f$ does not vanish on the line $\{k\in\C:\Im\,k=-c\}$. Then
$$
S_1'(\theta)=\frac{1}{2\theta}\bigg(1+\sum_{k\in\sK} m(k)
e^{-ik\theta}+E(c, \theta)e^{-c \theta}\bigg)\qquad(\theta>\lambda),
$$
where $\sK$ is the set consisting of the $($finitely many$)$ zeros $k$ of $f$ which
lie in the strip $\Pi_c:=\{k\in\C:-c<\Im\,k<0\}$,
$m(k)$ is the multiplicity of any such zero, and
$$
E(c, \theta):=
\frac{1}{2\pi i}\int_\R \left (
\frac{\widehat\distr'(u-ic)}{1-\widehat\distr(u-ic)}\,e^{-iu\theta}
-\widehat\distr'(u-ic)\right )\,du.
$$
\end{proposition}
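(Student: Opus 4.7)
The plan is to obtain Proposition~\ref{prop:firstS1'} by directly combining Corollary~\ref{cor:S_1-differentiable} with Theorem~\ref{thm:c-main}. The former already identifies $S_1'(\theta)$ with $\tfrac12 F_\distr(\theta)$ on the interval $(\lambda,\infty)$, while the latter supplies the asymptotic expansion of $F_\distr$ as a sum over zeros of $f$ in the strip $\Pi_c$ plus an exponentially small error. Thus the work reduces to verifying that Theorem~\ref{thm:c-main} applies to the particular density $\distr$ under consideration.

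First, I would invoke Corollary~\ref{cor:S_1-differentiable} to write
$$
S_1'(\theta)=\frac12\sum_{n\in\N}\frac{\distr^{*n}(\theta)}{n}=\frac12\,F_\distr(\theta)\qquad(\theta>\lambda).
$$
Next I would check the hypotheses of Theorem~\ref{thm:c-main}: from the standing assumptions of \S\ref{sec:proofb}, the density $\distr$ is a probability distribution compactly supported in $[\kappa,\lambda]$ with $\kappa>0$, is twice continuously differentiable on $(\kappa,\lambda)$, and satisfies $\distr(\kappa)\distr(\lambda)\ne0$. These are precisely the conditions imposed on $\distr$ in \S\ref{sec:c-intro} (with $a=\kappa$ and $b=\lambda$), so Theorem~\ref{thm:c-main} is applicable for any $c>0$ such that $f(k)=\widehat\distr(k)-1$ is nonvanishing on the line $\Im\,k=-c$ — which is exactly the hypothesis imposed in the statement of Proposition~\ref{prop:firstS1'}.

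Applying Theorem~\ref{thm:c-main} with $x=\theta$ then yields
$$
F_\distr(\theta)=\frac{1}{\theta}\bigg(1+\sum_{k\in\sK} m(k)\,e^{-ik\theta}+E(c,\theta)\,e^{-c\theta}\bigg)\qquad(\theta>\lambda),
$$
where $\sK$ is the (finite) set of zeros of $f$ in $\Pi_c$, $m(k)$ is the multiplicity of such a zero, and $E(c,\theta)$ is given by \eqref{ecx}. Multiplying by $\tfrac12$ and substituting into the expression for $S_1'(\theta)$ obtained in the first step gives the claimed identity. There is no substantial obstacle here: the result is essentially a syntactic merger of Corollary~\ref{cor:S_1-differentiable} and Theorem~\ref{thm:c-main}, with the only point requiring care being the bookkeeping that the hypotheses placed on $\distr$ in \S\ref{sec:proofb} match those required in \S\ref{sec:c-intro}, which they do by construction.
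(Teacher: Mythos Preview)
Your proposal is correct and matches the paper's approach exactly: the paper simply states that Proposition~\ref{prop:firstS1'} follows by ``combining Theorem~\ref{thm:c-main} and Corollary~\ref{cor:S_1-differentiable},'' and you have spelled out precisely this combination, including the verification that the standing hypotheses on $\distr$ in \S\ref{sec:proofb} coincide with those required in \S\ref{sec:c-intro} (with $a=\kappa$, $b=\lambda$).
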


To obtain a second expression for $S'_1(\theta)$, we start with
the definition \eqref{eq:S1defn} and observe that
\begin{equation}
\label{eq:vida}
S_1'(\theta)=\lim_{\eps\to 0^+}\eps^{-1}(S_1(\theta+\eps)-S_1(\theta))
=\lim_{\eps\to 0^+}\eps^{-1}\lim_{\substack{p\to\infty\\ p\in\sP}}
\sum_{\substack{p^\theta<q\le p^{\theta+\eps}\\ (q|p)=-1}}q^{-1}
\end{equation}
holds for any $\theta>\lambda$.  We note that
$$
\sum_{\substack{p^\theta<q\le p^{\theta+\eps}\\ (q|p)=-1}}q^{-1}
=\frac12\sum_{p^\theta<q\le p^{\theta+\eps}}q^{-1}
-\frac12\sum_{p^\theta<q\le p^{\theta+\eps}}(q|p)q^{-1}-
\begin{cases}
\frac12\,  p^{-1} &\text{if } 1\in (\theta, \theta+\varepsilon], \\
0 &\text{otherwise},
\end{cases}
$$
and using standard techniques derive the estimates
$$
\sum_{p^\theta<q\le p^{\theta+\eps}}q^{-1}
=\eps\theta^{-1}+O((\theta\log p)^{-1}+\eps^2\theta^{-2})
$$
and
$$
\sum_{p^\theta<q\le p^{\theta+\eps}}(q|p)q^{-1}
=\sum_{p^\theta<n\le p^{\theta+\eps}}\frac{\Lambda(n)(n|p)}{n\log n}
+O(p^{-\theta});
$$
hence from \eqref{eq:vida} it follows that
\begin{equation}
\label{eq:vida2}
S_1'(\theta)=
\frac{1}{2\theta}-\frac12
\lim_{\eps\to 0^+}\eps^{-1}\lim_{\substack{p\to\infty\\ p\in\sP}}
\sum_{p^\theta<n\le p^{\theta+\eps}}\frac{\Lambda(n)(n|p)}{n\log n}.
\end{equation}
Next, let
$$
\psi_p(x):=\sum_{n\le x}\Lambda(n)(n|p)\qquad(x>0).
$$
Using the trivial bound $\psi_p(x)\le\sum_{n\le x}\Lambda(n)\ll x$ one verifies that
\begin{align*}
\sum_{p^\theta<n\le p^{\theta+\eps}}\frac{\Lambda(n)(n|p)}{n\log n}
&=\int_{p^\theta}^{p^{\theta+\eps}}\frac{d\psi_p(u)}{u\log u}
=\int_{p^\theta}^{p^{\theta+\eps}}\frac{\psi_p(u)\,du}{u^2\log u}+o(1)
\qquad(p\to\infty).
\end{align*}
Thus, after the change of variables $u\mapsto p^t$ the relation
\eqref{eq:vida2} transforms to
\begin{equation}
\label{eq:vida3}
S_1'(\theta)=
\frac{1}{2\theta}-\frac12
\lim_{\eps\to 0^+}\eps^{-1}\lim_{\substack{p\to\infty\\ p\in\sP}}
\int_{\theta}^{\theta+\eps}\frac{\psi_p(p^t)\,dt}{tp^t}.
\end{equation}
To proceed further, we use the following statement, which is a reformulation
of Lemma~\ref{lem:fame} in the special case that $\chi$ is
the Legendre symbol.

\begin{lemma}
\label{lem:techlemma4}
Let $c_4$  have the property described in Lemma~\ref{lem:fame}.
For any constant $c>0$ one has  the representation 
\begin{equation}
\label{eq:psipexpr}
\psi_p(p^t)=-\sum_{\varrho\in\sZ_p}\frac{\widetilde m(\varrho)}{\varrho}\,p^{\varrho t}
+r(p,t,c),
\end{equation}
where  the remainder term $r(p,t,c)$ admits the bound
\begin{equation}\label{bound}
r(p,t,c)=O\big(p^te^{-ct}\big)
\end{equation}
uniformly for 
\begin{equation}\label{range}
c_4^{-1}\le t\le\frac{1}{K^2} \log p,
\end{equation}
and  $\sZ_p$ denotes the set of distinct zeros
$\varrho=\beta+i\gamma$ of $L(s,(\cdot|p))$ such that
$\beta>1-2c/\log p$ and $|\gamma|\le p$, $\widetilde m(\varrho)$
is the multiplicity of any such zero, and the implied constant depends only on $c$.
Here $K=K(c)$ is the constant  described in Lemma~\ref{lem:fame}.
\end{lemma}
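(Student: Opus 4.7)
The plan is to derive Lemma \ref{lem:techlemma4} as a direct specialization of Lemma \ref{lem:fame} to the Legendre symbol $\chi = (\cdot|p)$, taking the modulus to be $q = p$ and the argument to be $x = p^t$. Since $p$ is an odd prime, the Legendre symbol is a non-principal Dirichlet character modulo $p$ and hence primitive (as $p$ has no proper divisors greater than $1$), so the character hypothesis of Lemma \ref{lem:fame} holds with $q = p > 1$, and one may simply plug in $x = p^t$.

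First I would verify that the admissible range \eqref{eq:bounds}, namely $\exp(K\sqrt{\log x}) \le q \le x^{c_4}$, translates under these substitutions into the stated range \eqref{range}. The right-hand inequality $p \le p^{t c_4}$ is equivalent to $t \ge c_4^{-1}$, and the left-hand inequality $\exp(K\sqrt{t \log p}) \le p$ is equivalent to $K\sqrt{t \log p} \le \log p$, i.e., $t \le (\log p)/K^2$; combining these yields precisely $c_4^{-1} \le t \le (\log p)/K^2$. Next I would specialize the main term and the error term. The sum $-\sum_\varrho \widetilde m(\varrho)\,x^\varrho/\varrho$ from Lemma \ref{lem:fame}, with $x = p^t$, becomes $-\sum_{\varrho \in \sZ_p} \widetilde m(\varrho)\,p^{\varrho t}/\varrho$, and by construction $\sZ_p$ is the set of distinct zeros $\varrho = \beta + i\gamma$ of $L(s,(\cdot|p))$ with $\beta > 1 - 2c/\log p$ and $|\gamma| \le p$, which matches the zero set appearing in Lemma \ref{lem:fame} once $q = p$. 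The error $O\bigl(x \exp(-c(\log x)/\log q)\bigr)$ collapses to $O(p^t e^{-c t})$, giving the bound \eqref{bound}, with implied constant depending only on $c$ as inherited from Lemma \ref{lem:fame}.

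Since the argument is purely a change of variables applied to an estimate that has already been established, there is no substantive obstacle. The only subtlety worth mentioning is that the primality of $p$ is what guarantees the primitivity of $(\cdot|p)$ modulo $p$ needed to invoke Lemma \ref{lem:fame}; no separate treatment of Siegel zeros or additional zero-density input is required here, as those considerations are already absorbed into the proof of Lemma \ref{lem:fame}.
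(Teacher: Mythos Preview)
Your proposal is correct and matches the paper's treatment exactly: the paper does not give a separate proof of Lemma~\ref{lem:techlemma4} at all, introducing it simply as ``a reformulation of Lemma~\ref{lem:fame} in the special case that $\chi$ is the Legendre symbol,'' which is precisely the specialization (with $q=p$, $x=p^t$) that you carry out. Your verification of the range and the primitivity of $(\cdot|p)$ fills in the routine details the paper leaves implicit.
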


Before proceeding, we study the zeros set $\sZ_p$ from Lemma  \ref{lem:techlemma4}.

By  Lemma~\ref{lem:fortune} with $q=T=p$, one observes  that 
\begin{equation}\label{eq:arctic}
|\sZ_p|=\sum_{\varrho\in\sZ_p}\widetilde m(\varrho)=N(1-2c/\log p,p,(\cdot|p))\ll\exp(4cc_1),
\end{equation}
which shows that the number of zeros of the $L$-function $L(s,(\cdot|p))$ in the strip  $\{\beta +i\gamma\,|\,\beta>1-2c/\log p, \,\, |\gamma|\le p \}$ is uniformly bounded with respect to $p$. Let us define
$$
N=\limsup_{p\in \sP}|\sZ_p|.
$$
 
If $N=0$,  
the set $\sZ_p$ is empty for $p\in \sP$ large enough.
If $N>0$, then without loss of generality (i.e., replacing
$\sP$ with a suitable infinite subset of $\sP$) we may assume that 
$$
|\sZ_p|=N\qquad(p\in \sP).
$$
After the compactifying the complex plane $\mathbb{C}\to \mathbb{C}\cup \{\infty\}$, one can use  a straightforward compactness argument
to conclude, after possibly replacing $\sP$ with a suitable infinite subset of $\sP$,
that there are $N$ sequences of zeros $\{\varrho_p^{(n)}\}_{p\in\sP}$ of the $L$-functions
$L(\cdot,(\cdot | p))$ such that each sequence is contained
in the strip $\{\beta +i\gamma\,|\,\beta>1-2c/\log p, \,\, |\gamma|\le p \}$, and
\begin{equation}\label{predv}
\lim_{\substack{p\to\infty\\ p\in\sP}}(\varrho_p^{(n)}-1)\log p=\ell^{(n)}\in \mathbb{C}\cup\{\infty\}\qquad(n=1, 2, \dots , N).
\end{equation}
Here, the limits are taken with respect to the topology of the Riemann sphere  $\mathbb{C}\cup\{\infty\}$.

Denote by $\sL$  the set of \emph{distinct finite limits}
$\ell^{(n)}$ from \eqref{predv}; we always set  $\sL=\varnothing$ if $N=0$, and it can be the case that $\sL=\varnothing$ even if $N>0$.

 \begin{remark}\label{uprem}
From the definitions it is clear that
 \begin{equation}\label{uppp}
 -2c\le\Re\ell\le 0 \qquad (\ell\in \sL).
 \end{equation}
\end{remark}
  
The next lemma provides the second representation for  $S'(\theta)$.

\begin{lemma}\label{neman}
Let  $\sL$ be the zeros attractor defined above.
Then 
$$
S_1'(\theta)=
\frac{1}{2\theta}+\frac{1}{2\theta}
\sum_{\substack{\ell\in\sL\\\Re\ell>-c}}    \widehat  m(\ell)e^{\ell\theta}
+O(e^{-c\,\theta})\qquad(\theta\to \infty),
$$
where  the multiplicity  $\widehat m(\ell)$of $\ell\in \cL$ is given by
\begin{equation}\label{mult}
  \widehat  m(\ell)=\#\{ n\, |\,\ell^{(n)}=\ell\}.
\end{equation}
\end{lemma}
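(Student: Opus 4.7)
The plan is to substitute Lemma~\ref{lem:techlemma4} into equation \eqref{eq:vida3} and carefully track all resulting contributions. Writing
$$
\psi_p(p^t)=-\sum_{\varrho\in\sZ_p}\frac{\widetilde m(\varrho)}{\varrho}p^{\varrho t}+r(p,t,c),
$$
with $r(p,t,c)=O(p^t e^{-ct})$ uniformly on the admissible $t$-range, I will first dispense with the remainder: since $|r(p,t,c)/(tp^t)|\ll e^{-ct}/t$, its integral over $[\theta,\theta+\eps]$ is $O(\eps e^{-c\theta}/\theta)$, so after multiplying by $\eps^{-1}$ and sending $\eps\to 0^+$ only an $O(e^{-c\theta})$ contribution remains.

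For the main term I focus on
$$
\Sigma_p(\eps,\theta):=\sum_{\varrho\in\sZ_p}\frac{\widetilde m(\varrho)}{\varrho}\int_\theta^{\theta+\eps}\frac{p^{(\varrho-1)t}}{t}\,dt.
$$
By \eqref{eq:arctic} this sum has at most $N$ terms, so after passing to the subsequence of $\sP$ on which the $N$ zero-sequences $\varrho_p^{(n)}$ with limits $\ell^{(n)}\in\C\cup\{\infty\}$ are defined, I can compute the $p\to\infty$ limit term by term. For each index $n$ with $\ell^{(n)}\in\C$ one has $\varrho_p^{(n)}\to 1$ and $p^{(\varrho_p^{(n)}-1)t}=e^{(\varrho_p^{(n)}-1)(\log p)t}\to e^{\ell^{(n)}t}$ uniformly on $[\theta,\theta+\eps]$, so the corresponding contribution converges to $\int_\theta^{\theta+\eps}e^{\ell^{(n)}t}/t\,dt$.

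The main obstacle is the case $\ell^{(n)}=\infty$. Since $\Re(\varrho_p^{(n)}-1)\log p\in[-2c,0]$ is bounded, the condition $\ell^{(n)}=\infty$ forces $|\Im(\varrho_p^{(n)}-1)\log p|\to\infty$, so I plan to invoke a Riemann--Lebesgue style cancellation: integration by parts in $\int_\theta^{\theta+\eps}e^{zt}/t\,dt$ with $z=(\varrho_p^{(n)}-1)\log p$ (of bounded nonpositive real part) produces a boundary term and a tail integral, each of size $O(1/(|z|\theta))$, both of which vanish as $p\to\infty$. After this cancellation the $p\to\infty$ limit of $\Sigma_p(\eps,\theta)$ equals $-\sum_{\ell\in\sL}\widehat m(\ell)\int_\theta^{\theta+\eps}e^{\ell t}/t\,dt$.

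The proof then finishes routinely: dividing by $\eps$ and letting $\eps\to 0^+$ replaces each integral by $e^{\ell\theta}/\theta$ via continuity of the integrand, which together with \eqref{eq:vida3} yields
$$
S_1'(\theta)=\frac{1}{2\theta}+\frac{1}{2\theta}\sum_{\ell\in\sL}\widehat m(\ell)e^{\ell\theta}+O(e^{-c\theta}).
$$
Those finitely many $\ell\in\sL$ with $\Re\ell\le-c$ each contribute $|e^{\ell\theta}|\le e^{-c\theta}$ and may be absorbed into the error term, giving the stated formula.
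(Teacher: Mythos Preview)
Your proposal is correct and follows essentially the same route as the paper: substitute Lemma~\ref{lem:techlemma4} into \eqref{eq:vida3}, bound the remainder uniformly by $O(e^{-ct})$, split the zero contributions according to whether $\ell^{(n)}$ is finite or infinite, kill the infinite ones via integration by parts (exploiting $|\Im(\varrho_p^{(n)}-1)\log p|\to\infty$ while the real part stays bounded), and finally absorb the terms with $\Re\ell\le -c$ into the error. One small slip to fix: with your definition of $\Sigma_p$ the $p\to\infty$ limit is $+\sum_{\ell\in\sL}\widehat m(\ell)\int_\theta^{\theta+\eps}e^{\ell t}\,dt/t$, not $-\sum$; the minus sign in the explicit formula for $\psi_p$ and the minus sign in \eqref{eq:vida3} then combine to produce the correct final expression.
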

\begin{proof} If $N=0$ (and thus, $\sL=\varnothing$)
we claim that
\begin{equation}\label{asrep}
S_1'(\theta)=
\frac{1}{2\theta}+O(e^{-c\theta}) \qquad (\theta\to \infty).
\end{equation}
Indeed, the set $\sZ_p$ is empty   for $p\in \sP$ large enough, hence by Lemma \ref{lem:techlemma4} 
the function $\psi_p(p^t)$ admits the following uniform estimate 
\begin{equation}\label{bound0}
\psi_p(p^t)=O\big(p^te^{-ct}\big) \qquad ( p\to \infty),
\end{equation}
provided that the parameter $t$ satisfies
\begin{equation}\label{range0}
c_4^{-1}\le t\le K^{-2}\log p.
\end{equation}
Recall that by \eqref{eq:vida3} we have 
\begin{equation}\label{s2theta2}
S_1'(\theta)=
\frac{1}{2\theta}-\frac{1}{2}\lim_{\eps\to 0^+}\eps^{-1}\lim_{\substack{p\to\infty\\ p\in\sP}}
\int_{\theta}^{\theta+\eps}
 \frac{\psi_p(p^t) dt}{tp^t}\qquad (\theta>\lambda).
\end{equation}
If, in addition, $\theta>c_4^{-1}$,  then for all $t\in [\theta, \theta+\varepsilon]$ the two-sided estimate \eqref{range0} holds
provided that $\varepsilon$ is fixed and $p$ is large enough. Therefore, taking into account that the double limit  in the right hand side of \eqref{s2theta2} exists, one can use the uniform bound \eqref{bound0} to conclude that 
\begin{equation}\label{prom}
\lim_{\eps\to 0^+}\eps^{-1}\lim_{\substack{p\to\infty\\ p\in\sP}}
\int_{\theta}^{\theta+\eps} \frac{\psi_p(p^t)dt}{tp^t}=O(e^{-c\theta}) \qquad (\theta\to \infty),
\end{equation}
which together with \eqref{s2theta2} proves the representation \eqref{asrep}. This completes the proof of  the lemma in the case that $N=0$.

Next, assume that $N>0$. Using \eqref{eq:psipexpr} we have
\begin{equation}\label{s1theta}
S_1'(\theta)=
\frac{1}{2\theta}+\frac12
\lim_{\eps\to 0^+}\eps^{-1}\lim_{\substack{p\to\infty\\ p\in\sP}}
\int_{\theta}^{\theta+\eps}\bigg(
\sum_{\varrho\in\sZ_p}\frac{\widetilde m(\varrho)}{\varrho}\,p^{\varrho t}
-r(p,t,c)\bigg)
\frac{dt}{tp^t}
\end{equation}
for all $\theta>\lambda$, where
$$
r(p,t,c)=\psi_p(p^t)+\sum_{\varrho\in\sZ_p}\frac{\widetilde m(\varrho)}{\varrho}\,p^{\varrho t}.
$$
Suppose for the moment that we have shown that the limit
\begin{equation}\label{limex}
\lim_{\eps\to 0^+}\eps^{-1}\lim_{\substack{p\to\infty\\ p\in\sP}}
\int_{\theta}^{\theta+\eps}
\sum_{\varrho\in\sZ_p}\frac{\widetilde m(\varrho)}{\varrho}\,p^{\varrho t}\,\frac{dt}{tp^t}
\end{equation}
exists.  Under this assumption, \eqref{s1theta} splits as 
\begin{align*}
S_1'(\theta)&=
\frac{1}{2\theta}+\frac12
\lim_{\eps\to 0^+}\eps^{-1}\lim_{\substack{p\to\infty\\ p\in\sP}}
\int_{\theta}^{\theta+\eps}
\sum_{\varrho\in\sZ_p}\frac{\widetilde m(\varrho)}{\varrho}\,p^{\varrho t}\frac{dt}{tp^t},
\\&\qquad- \frac12
\lim_{\eps\to 0^+}\eps^{-1}\lim_{\substack{p\to\infty\\ p\in\sP}}
\int_{\theta}^{\theta+\eps}r(p,t,c)
\frac{dt}{tp^t}.
\end{align*}
Arguing as in the case $N=0$, and taking into account
the bound \eqref{bound}, one obtains that
\begin{equation}\label{prom00}
\lim_{\eps\to 0^+}\eps^{-1}\lim_{\substack{p\to\infty\\ p\in\sP}}
\int_{\theta}^{\theta+\eps} \frac{r(c,p, t)dt}{tp^t}=O(e^{-c\theta}) \qquad (\theta\to \infty);
\end{equation}
therefore,
$$
S_1'(\theta)=
\frac{1}{2\theta}+\frac12
\lim_{\eps\to 0^+}\eps^{-1}\lim_{\substack{p\to\infty\\ p\in\sP}}
\int_{\theta}^{\theta+\eps}
\sum_{\varrho\in\sZ_p}\frac{\widetilde m(\varrho)}{\varrho}\,p^{\varrho t}\frac{dt}{tp^t}+
O(e^{-c\theta}) \qquad (\theta \to \infty).
$$
Thus, to complete the proof we need to show that the double limit \eqref{limex} exists, and to verify  that 
$$
\frac12
\lim_{\eps\to 0^+}\eps^{-1}\lim_{\substack{p\to\infty\\ p\in\sP}}
\int_{\theta}^{\theta+\eps}
\sum_{\varrho\in\sZ_p}\frac{\widetilde m(\varrho)}{\varrho}\,p^{\varrho t}\frac{dt}{tp^t}=\frac{1}{2\theta}
\sum_{\ell\in\sL}   \widehat  m(\ell)e^{\ell\theta} \qquad (\theta>\lambda).
$$

For fixed $p\in\sP$ we have
\begin{align*}
\int_{\theta}^{\theta+\eps}
\sum_{\varrho\in\sZ_p}\frac{\widetilde m(\varrho)}{\varrho}\,p^{(\varrho -1)t}\frac{dt}{t}
&=\sum_{\substack{n\\\ell^{(n)}<\infty}} \frac{1}{\varrho_p^{(n)}}
\int_{\theta}^{\theta+\eps}
\,p^{(\varrho_p^{(n)} -1)t}\frac{dt}{t}
\\&\qquad+
\sum_{\substack{n\\\ell^{(n)}=\infty}} \frac{1}{\varrho_p^{(n)}}
\int_{\theta}^{\theta+\eps}
\,p^{(\varrho_p^{(n)} -1)t}\frac{dt}{t}.
\end{align*}
Using the definition  \eqref{predv} of $\ell^{(n)}$
we see that
$$
\lim_{\substack{p\to\infty\\ p\in\sP}}\sum_{\substack{n\\\ell^{(n)}<\infty}} \frac{1}{\varrho_p^{(n)}}
\int_{\theta}^{\theta+\eps}
\,p^{(\varrho_p^{(n)} -1)t}\frac{dt}{t}=\sum_{\substack{n\\\ell^{(n)}<\infty}}
 \int_{\theta}^{\theta+\eps}
\,e^{\ell^{(n)}t}\frac{dt}{t},
$$
and therefore
\begin{align*}
\frac12
\lim_{\eps\to 0^+}\eps^{-1}&\lim_{\substack{p\to\infty\\ p\in\sP}}\sum_{\substack{n\\\ell^{(n)}<\infty}} \frac{1}{\varrho_p^{(n)}}
\int_{\theta}^{\theta+\eps}
\,p^{(\varrho_p^{(n)} -1)t}\frac{dt}{t}
\\&=\frac12
\lim_{\eps\to 0^+}\eps^{-1}\sum_{\substack{n\\\ell^{(n)}<\infty}}
 \int_{\theta}^{\theta+\eps}
\,e^{\ell^{(n)}t}\frac{dt}{t}
=\frac{1}{2\theta}
\sum_{\ell\in\sL}  m(\ell)e^{\ell\theta}.
\end{align*}

Next, we show that
\begin{equation}\label{nunu}
\lim_{\substack{p\to\infty\\ p\in\sP}}\sum_{n:\, \ell^{(n)}=\infty } \frac{1}{\varrho_p^{(n)}}
\int_{\theta}^{\theta+\eps}
\,p^{(\varrho_p^{(n)} -1)t}\frac{dt}{t}=0.
\end{equation}
Indeed, if $\ell^{(n)}=\infty$, from \eqref{predv} and the estimate
\begin{equation}\label{raz0}
|1-\Re\varrho_p^{(n)}|\log p=(1-\Re\varrho_p^{(n)})\log p\le 2c,
\end{equation}
it follows that
\begin{equation}\label{dva0}
\lim_{\substack{p\to\infty\\ p\in\sP}}\lim |\omega(n,p)|=\infty,
\end{equation}
where 
$$
\omega(n,p)=\Im\varrho_p^{(n)}\log p.
$$
If $p\in \sP$ is large enough and $n$ is such that $\ell^{(n)}=\infty$,
integration by parts yields
\begin{align}
\int_{\theta}^{\theta+\eps}
\,p^{(\varrho_p^{(n)} -1)t}\frac{dt}{t}&=\int_{\theta}^{\theta+\eps}
e^{(\Re\varrho_p^{(n)} -1)\log pt}  e^{i\,\omega(n,p)t}\,\frac{dt}{t}
\nonumber \\&
=\frac{1}{i\,\omega(n,p)}\cdot e^{(\Re\varrho_p^{(n)} -1)\log pt}  e^{i\,\omega(n,p)t}\frac1t \bigg \vert_\theta^{\theta+\varepsilon}
\nonumber \\&\quad-
\frac{1}{i\,\omega(n,p)}\int_{\theta}^{\theta+\eps}  \frac{d}{dt}
\bigg(  \frac{e^{(\Re\varrho_p^{(n)} -1)\log pt}}{t}  \bigg)e^{i\,\omega(n,p)t}\,dt, \label{tri0}
\end{align}
with $\omega(n,p)\ne 0$.
Combining \eqref{raz0},  \eqref{dva0}  and \eqref{tri0} we have 
$$
\int_{\theta}^{\theta+\eps}
\,p^{(\varrho_p^{(n)} -1)t}\frac{dt}{t}=o(1) \qquad (p\to \infty, \, p\in \sP),
$$
provided that   $\ell^{(n)}=\infty$.
Taking into account the bound 
$$
\frac{1}{|\varrho_p^{(n)}|}\le \frac{1}{1-2c/\log p}<2,
$$
which holds if $p$ is large enough, \eqref{nunu} is proved. Hence, 
$$
S_1'(\theta)=
\frac{1}{2\theta}+\frac{1}{2\theta}
\sum_{\ell\in\sL}  \widehat m(\ell)e^{\ell\theta}
+O(e^{-c\,\theta})\qquad(\theta\to \infty).
$$
Finally, the summands with $\Re\ell\le -c$ can be absorbed by the error term, which  completes the proof of the lemma.
\end{proof}

\subsection{Proof of Theorem \ref{thm:b-main}}
\begin{proof}
To prove our main result, Theorem \ref{thm:b-main}, we compare two asymptotic representations for the derivative $S_1'(\theta)$ provided by 
Proposition \ref{prop:firstS1'}
$$
S_1'(\theta)=\frac{1}{2\theta}+\frac{1}{2\theta}\sum_{k\in\sK} m(k)
e^{-ik\theta}+O(e^{-c \theta})\qquad(\theta\to \infty)
$$
and  by Lemma  \ref{neman}
$$
S_1'(\theta)=
\frac{1}{2\theta}+\frac{1}{2\theta}
\sum_{\ell\in\sL:\, \Re\ell >-c}  \widehat m(\ell)e^{\ell\theta}
+O(e^{-c\,\theta})\qquad(\theta\to \infty),
$$
respectively.

Comparing those representations yields
\begin{equation}
\label{eq:snowbound3}
\sum_{\ell\in\sL:\, \Re \ell>-c}\widehat m(\ell)e^{\ell\theta}
=\sum_{k\in\sK} m(k)e^{-ik\theta}+O(\theta e^{-c\,\theta})\qquad (\theta\to \infty).
\end{equation}
However, using Lemma~\ref{lem:gift} and the fact that
$\Im\,k>-c$ for all $k\in\sK$, the resulting relation is impossible
unless it is the case that $-ik$ lies in the set $\sL\cap \{z\in \mathbb{C}\,|-c< \Im(z)\le 0\}$ and therefore in $\sL$ for every $k\in\sK$.
Since the constant $c$ can be chosen arbitrarily large, this completes the proof of Theorem~\ref{thm:b-main}.
\end{proof}
\begin{remark}\label{remrem}
We note that the upper bound \eqref{uppp} from Remark~\ref{uprem} can be improved; we have 
$$
\Re \ell<0 \qquad (\ell\in \sL).
$$
Indeed, suppose that $\Re \ell =0$ for some $\ell\in \sL$.
This means that there is   a sequence $(\varrho_p)_{p\in\sP}$
with $\varrho_p\in\sZ_p$ such that $\Re((\varrho_p-1)\log p)\to 0$.
By Lemma~\ref{lem:exceptional} it is clear that
each zero $\varrho_p$ of $L(s,(\cdot|p))$ is exceptional if $p$ is large enough;
in particular, $\varrho_p=\beta_p$ is a real simple zero.  Since 
$(\beta_p-1)\log p\to 0$, the final statement in
Lemma~\ref{lem:exceptional} implies that for all sufficiently large $p$
the set $\sZ_p$ consists only of the single zero $\beta_p$; consequently, $\ell=0$ and thus $\sL=\{0\}$.
By  Lemma \ref{neman},
$$
S_1'(\theta)=
\frac{1}{2\theta}+\frac{1}{2\theta}
+O(e^{-c\,\theta})=\frac{1}{\theta}
+O(e^{-c\,\theta})\qquad(\theta\to \infty),
$$
which is inconsistent with the asymptotics obtained in   Proposition  \ref{prop:firstS1'}.

A  similar reasoning shows that the set $\sL$ is also free of (negative) reals,
that is, 
$$
\sL\cap (-\infty, 0)=\varnothing.
$$

Indeed,  since  $$
 \widehat  \distr(-i\beta)=\int_\kappa^\lambda e^{\beta x}\distr (x)dx>\int_\kappa^\lambda \distr (x)dx=1\quad (\beta>0),
 $$ 
one concludes that the equation $\widehat  \distr(k)=1$ has no zeros on the negative imaginary axis. Since the roots of  the equation  $\widehat  \distr(k)=1$ are complex conjugates of each other, using  
Proposition \ref{prop:firstS1'} one observes that the higher order terms  
in the asymptotic expansion  for the derivative $S_1'(\theta)$,$$
S_1'(\theta)=\frac{1}{2\theta}+\frac{1}{2\theta}\sum_{k\in\sK} m(k)\cos( \Re (k) \theta )
e^{-|\Im (k)|\theta}+O(e^{-c \theta})\qquad(\theta\to \infty),
$$
 are oscillatory in any order, which is inconsistent with the asymptotics provided by   Lemma \ref{neman}
$$
S_1'(\theta)=
\frac{1}{2\theta}+\frac{1}{2\theta}
\sum_{\substack{\ell\in\sL\\\Re\ell>-c}}    \widehat  m(\ell)e^{\ell\theta}
+O(e^{-c\,\theta})\qquad(\theta\to \infty),
$$
unless $\Im(\ell)\ne 0 $ for all $\ell\in \sL$.

In particular, the sequence of the $L$-functions $\{L(s,(\cdot|p))\}_{p\in \sP}$ is allowed to have  only finitely many terms that have an  exceptional zero  $\varrho_p$
satisfying the bound
$$
\varrho_p>1-\frac{c_2}{\log p}
$$
 from Lemma \ref{lem:exceptional}.

\end{remark}

\section{Connection with Heath-Brown's result}
\label{sec:heath-brown}

The main goal of this section is to show how to deduce the Heath-Brown result concerning the behavior of the Dirichlet $L$-function  from our more general considerations.

We start, however, with preliminary considerations  where we discuss the optimality of our main assumptions concerning the distribution of quadratic nonresidues, namely the hypotheses
\eqref{eq:n0pkap},
\eqref{eq:Np-est}
and \eqref{eq:intrho}. 

\begin{theorem}\label{low} Assume the $(\kappa,\lambda)$ hypotheses of Theorem~\ref{thm:b-main},
i.e., that the conditions of \eqref{eq:n0pkap}, \eqref{eq:Np-est} and \eqref{eq:intrho} are met. 
Then the following inequality 
\begin{equation}
\label{eq:kappa_bound}
\kappa\le \lambda/\sqrt{e}
\end{equation}
necessarily holds. Moreover, in the case that 
\begin{equation}\label{eq:kappa_bound1}
\kappa=\lambda/\sqrt{e}, 
\end{equation}
the density $\distr$ is given by
\begin{equation}\label{form1}
\distr(\theta)=\frac2\theta \,X_{\kappa,\lambda}(\theta),
\end{equation}
where $X_{\kappa,\lambda}(\cdot)$ is the indicator function of the interval $[\kappa, \lambda]$.
Finally, there is an absolute constant $\lambda_0>0$ such that if
$\lambda<\lambda_0$, then the strict inequality 
 \begin{equation}\label{eq:kappa_bound2}
\kappa<\lambda/\sqrt{e} 
\end{equation}
necessarily holds.
 
\end{theorem}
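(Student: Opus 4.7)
My plan is to exploit the identity \eqref{eq:S1-reln-two} in the short range of $\theta$ on which all but one $S_j$ contribution vanishes, and then compare the resulting closed form for $\delta$ with the Mertens upper bound on $S_1$.

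For part~1, on the interval $[\kappa,2\kappa]$ the relation \eqref{eq:Spj-vanishes} forces $S_j(\theta)\equiv 0$ for $j\ge 2$; moreover, on the subinterval $[\kappa,\min(2\kappa,\lambda)]$ the convolution $\int_0^\theta S_1(u)\distr(\theta-u)\,du$ in \eqref{eq:S1-reln-two} also vanishes, because $\distr(\theta-u)\ne 0$ requires $\theta-u\ge\kappa$ and $S_1(u)\ne 0$ requires $u\ge\kappa$, forcing $\theta\ge 2\kappa$. Hence \eqref{eq:S1-reln-two} collapses to $\delta(\theta)=S_1(\theta)$ on that subinterval. Since $S_{p,1}(\theta)$ is the sum of $1/q$ over nonresidue primes $q\in(p^\kappa,p^\theta]$, Mertens' theorem yields
\[
S_1(\theta)\le\lim_{p\to\infty}\sum_{p^\kappa<q\le p^\theta}q^{-1}=\log(\theta/\kappa).
\]
Evaluating at $\theta=\lambda$ in the range $\lambda\le 2\kappa$ gives $\tfrac12=\delta(\lambda)\le\log(\lambda/\kappa)$, hence $\lambda\ge\sqrt{e}\,\kappa$; the remaining case $\lambda>2\kappa$ is automatic since $\sqrt{e}<2$.

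For part~2, assume $\kappa=\lambda/\sqrt{e}$, so the previous inequality is tight at $\theta=\lambda$. I would study the difference
\[
R(\theta):=\log(\theta/\kappa)-S_1(\theta)=\lim_{p\to\infty}\sum_{\substack{p^\kappa<q\le p^\theta\\(q|p)=+1}}q^{-1},
\]
which is a pointwise limit of nonnegative, nondecreasing sums, hence itself nonnegative and nondecreasing on $[\kappa,\lambda]$; since $R(\lambda)=0$, it must vanish identically on $[\kappa,\lambda]$. Thus $\delta(\theta)=\log(\theta/\kappa)$ on $[\kappa,\lambda]$, and differentiating via \eqref{eq:intrho} produces $\distr(\theta)=2/\theta$ on $(\kappa,\lambda)$, which is \eqref{form1}.

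For part~3, my strategy is to argue by contradiction: if $\kappa=\lambda/\sqrt{e}$ for some small $\lambda$, part~2 pins down $\distr(\theta)=2/\theta$ on $[\kappa,\lambda]$, and Proposition~\ref{prop:zeros} (with $b=\lambda$ and $\distr(\lambda)=2/\lambda$) places the nearest zeros of $\widehat\distr-1$ at
\[
k_{\pm 1}\approx\pm\tfrac{5\pi}{2\lambda}-\tfrac{i}{\lambda}\log\pi.
\]
Feeding this into Theorem~\ref{thm:b-main} produces a conjugate pair of sequences of zeros $\varrho_p^{(\pm 1)}$ of $L(s,(\cdot|p))$ with $(1-\beta_p^{(\pm 1)})\log p\to(\log\pi)/\lambda$ and $|\Im\varrho_p^{(\pm 1)}|\log p\to 5\pi/(2\lambda)$, i.e.\ a pair of nonreal zeros of $L(s,(\cdot|p))$ forced into the exceptional strip $\{\Re s>1-C/\log p\}$ with $C=(\log\pi)/\lambda$ arbitrarily large as $\lambda\to 0$. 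The plan is to choose $\lambda_0>0$ so that for $\lambda<\lambda_0$ this forced configuration --- augmented by the higher zeros $k_{\pm n}$ accumulating nearby --- violates Lemma~\ref{lem:exceptional} (which pins the unique exceptional zero to the real axis and, via the Deuring--Heilbronn repulsion, excludes further zeros from a shrinking neighborhood) together with Lemma~\ref{lem:fortune} applied to the single character $(\cdot|p)$; the comparison is in the spirit of the argument in Remark~\ref{remrem} ruling out real elements of $\sL$. The main technical obstacle is extracting an effective value of $\lambda_0$ from this comparison: one must balance the number of predicted zeros produced by the $k_{\pm n}$ against the zero-density and repulsion bounds, and quantify at what scale of $\lambda$ the two become incompatible.
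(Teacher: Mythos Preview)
Your Steps~1 and~2 are essentially the paper's. The paper obtains $S_1=\delta$ on $[0,2\kappa)$ from Proposition~\ref{prop:S1-converted} rather than from \eqref{eq:S1-reln-two}, and in Step~2 runs the Mertens sandwich
\[
\tfrac12=S_1(\lambda)\le S_1(\theta)+\log(\lambda/\theta)\le\log(\theta/\kappa)+\log(\lambda/\theta)=\tfrac12
\]
directly rather than through your auxiliary $R(\theta)$; the content is the same.

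Part~3 has a genuine gap. Proposition~\ref{prop:zeros} is an asymptotic as $n\to\infty$ and gives no control at $n=1$, so your placement of $k_{\pm 1}$ is unjustified. More importantly, the intended contradiction does not fire: you correctly compute $(1-\beta_p)\log p\to C$ with $C\asymp 1/\lambda\to\infty$ as $\lambda\to 0$, but Lemma~\ref{lem:exceptional} only governs the \emph{fixed} strip $\{\Re s>1-c_2/\log p\}$, and Lemma~\ref{lem:fortune} permits up to $\ll e^{2c_1 C}$ zeros in $\{\Re s>1-C/\log p\}$, a bound that explodes with $C$. Thus ``many nonreal zeros in an ever-wider strip'' is perfectly consistent with both lemmas, and the balance you sketch does not produce an effective $\lambda_0$.

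The paper's Step~3 proceeds differently. It first records the exact scaling
\[
\widehat\distr_\lambda(k)=2\int_{1/(4\sqrt e)}^{1/4}e^{i(4\lambda k)x}\,\frac{dx}{x},
\]
so that the zeros of $\widehat\distr_\lambda-1$ are obtained from those of the \emph{fixed} equation \eqref{nini} by a single rescaling; this lets one name the root $k_0$ of \eqref{nini} nearest the real axis and set $\lambda_0=c_2/(4|\Im k_0|)$ explicitly, without any appeal to Proposition~\ref{prop:zeros}. The paper then argues that for $\lambda<\lambda_0$ the zeros $\varrho_p$ furnished by Theorem~\ref{thm:b-main} lie in the exceptional region of Lemma~\ref{lem:exceptional}, hence must be the (real, simple) exceptional zeros, forcing a negative real element into the attractor $\sL$; the contradiction is then closed not by a zero-count but by the already-proved Remark~\ref{remrem}, which shows $\sL\cap(-\infty,0)=\varnothing$. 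The two ingredients you are missing are the clean scaling to \eqref{nini} (replacing the large-$n$ asymptotic) and the appeal to Remark~\ref{remrem} (replacing the zero-density balance).
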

\begin{proof} {\it Step 1}. First, we show that \eqref{eq:kappa_bound} holds.
Indeed, suppose on the contrary that 
\begin{equation}\label{nea}
\kappa>\lambda/\sqrt{e}.
\end{equation}
 By 
Proposition \ref{prop:S1-converted} we have the representation 
$$ S_1(\theta)=\sum_{n\in\N}
\frac{(\delta*\distr^{*(n-1)})(\theta)}{n}\qquad(\theta\ge 0).
$$
Since both $\delta$ and $\distr$ vanish on $[0,\kappa)$ by hypothesis,  the convolution  $\delta*\distr^{*(n-1)}$ vanishes on the interval $[0, n\kappa)$ and therefore
\begin{equation}\label{s=l}
 S_1(\theta)=\delta(\theta) \qquad(0\le \theta<2\kappa).
\end{equation}
From \eqref{nea} it follows that $\lambda<2\kappa$
and thus
$$
S_1(\lambda)=\delta(\lambda)=1/2.
$$
Here, the last equality follows from our hypothesis that the distribution $\distr$ is supported on
$[\kappa,\lambda]$ and that
$$
\delta(\lambda)=\frac12\int_0^\lambda\distr(u)\,du=\frac12.
$$
Recalling that  
$$S_{1}(\theta)=\lim_{\substack{p\to\infty\\ p\in\sP}}
\sum_{\substack{q\le
p^\theta\\
(q|p)=-1}}\frac{1}{q}=\lim_{\substack{p\to\infty\\ p\in\sP}}
\sum_{\substack{p^\kappa<q\le
p^\theta\\
(q|p)=-1}}\frac{1}{q}
$$
and using Mertens' theorem, one gets the bound
$$
S_{1}(\lambda)= \lim_{\substack{p\to\infty\\ p\in\sP}}
\sum_{\substack{p^\kappa<q\le
p^\lambda\\
(q|p)=-1}}\frac{1}{q}\le \lim_{p\to \infty}
\sum_{p^\kappa<q\le
p^\lambda}\frac{1}{q}=\log\frac\lambda\kappa.
$$
Hence
$$
\frac12\le \log\frac\lambda\kappa,
$$
which is inconsistent with \eqref{nea}. The proof of \eqref{eq:kappa_bound} is complete.

\newpage
{\it Step 2}.
Next,  we show that 
\eqref{eq:kappa_bound1} implies  \eqref{form1}.
Using   Mertens' theorem again we    obtain that 
\begin{align*}
 \frac12&=S_1(\lambda)=S_1(\theta)+(S_1(\lambda)-S_1(\theta))\\
 &\le S_1(\theta)+\log \frac\lambda\theta\le\log\frac\theta\kappa+
 \log \frac\lambda\theta=\log\frac\lambda\kappa=\frac12,
\end{align*}
 and thus
 \begin{equation}\label{ex}
 S_1(\theta)=\log\frac\theta\kappa \qquad (\kappa\le \theta\le\lambda).
 \end{equation}
It remains to observe that  $\lambda=\kappa\sqrt{e}<2\kappa$,
and therefore one can use  \eqref{s=l} to conclude that 
 $$
 \delta(\theta)= S_1(\theta)=\log\frac\theta\kappa  \qquad(\kappa\le \theta\le\lambda),
 $$ 
 which proves \eqref{form1} in view of the equality
 \begin{equation}
 \label{eq:intrho1}
\delta(\theta)=\frac12\int_0^\theta\distr(u)\,du\qquad(\theta\ge 0).
\end{equation}
 
{\it Step 3}. Finally, we  prove  the remaining assertion of the theorem for
$$
\lambda_0=\frac{c_2}{4|\Im(k_0)|},
$$
where 
 $c_2$  is the constant from Lemma~\ref{lem:exceptional},
and $k_0$ is one of the roots of the   equation
\begin{equation}
\label{nini}
2\int_{1/(4\sqrt{e})}^{1/4} e^{i kx}\,\frac{dx}{x}=1.
\end{equation}
that lie closest to the real axis.

Suppose that  inequality \eqref{eq:kappa_bound2} does not hold;
then, by Step 1, we have  
 \begin{equation}\label{jam}
\kappa=\lambda/\sqrt{e}.
\end{equation}
By Step 2, the probability distribution $\distr$ is given by
$$
\distr(\theta)=\distr_\lambda(\theta)=\frac2\theta X_{\kappa,\lambda}(\theta).
$$
Evaluating the Fourier transform of the function $d_\lambda$ 
 we have
\begin{align*}
\widehat d_\lambda(k):&=2\int_{\lambda/\sqrt{e}}^\lambda e^{ikx}\,\frac{dx}{x}
=2\int_{1/(4\sqrt{e})}^{1/4} e^{4i\lambda kx}\,\frac{dx}{x}.
\end{align*}
Thus, the zeros of the equation 
$$
\widehat d_\lambda(k)=1
$$
can obtained from the roots of the equation 
\eqref{nini}
by rescaling $k\to 4\lambda k$.

 By Theorem 1.2, there is a complex sequence $\varrho_p$ with $L(\varrho_p, (\cdot |p))=0 $ such that 
$$
(\varrho_p-1) \log p \to -ik_0.
$$
Since, by hypothesis, 
$$
\lambda<\lambda_0=\frac{c_2}{4|\Im(k_0)|},
$$
one obtains that 
$$
1-\Re(\varrho_p)< 4\lambda \frac{|\Im(k_0)|}{\log p}< 4\lambda_0 \frac{|\Im(k_0)|}{\log p}=\frac{c_2}{\log p}
 \qquad (p \text{ is large enough}).
$$
By Lemma 4.2 the roots $\varrho_p$ are exceptional zeros if $p$ is large enough, and hence 
$$
\sL\cap (-\infty, 0)\ne\varnothing,
$$
where $\sL$ the is zeros attractor defined by \eqref{predv}.
However, this is impossible in view of Remark~\ref{remrem},
and this contradiction completes the proof.
\end{proof}

\begin{remark}
It follows from Theorem \ref{low} that  the inequality \eqref{eq:kappa_bound} cannot be relaxed.
Thus we see that the hypotheses \eqref{eq:n0pkap}, \eqref{eq:Np-est}
and \eqref{eq:intrho} together imply that the probability distribution
$\distr$ cannot be ``too concentrated" in a certain sense;
in particular, the convex hull of the support of the probability
distribution must always contain the critical interval
$\big[\lambda/\sqrt{e},\lambda\big]$.
\end{remark}

In the case that $\lambda=1/4$, the  exponent
$$
\kappa=\frac{1}{4\sqrt{e}}
$$
coincides with the exponent in the Burgess bound.
 In particular, from Theorem \ref{low}  it follows that if $\lambda=1/4$ and $\kappa=1/(4\sqrt{e})$, then
 $$
 \delta(\theta)=\log 4 \theta \sqrt{e} \qquad\big(1/(4\sqrt{e})\le  \theta\le 1/4\big),
 $$
 provided that  the requirements \eqref{eq:Np-est} and \eqref{eq:intrho} are met. 

As it turns out, in the special case that $\kappa=\lambda/\sqrt{e}$,
one can replace the hypotheses \eqref{eq:Np-est} and \eqref{eq:intrho}
 by a much weaker condition (see \eqref{ee} below) and obtain a considerably stronger result,  which automatically guarantees the existence
of the probability distribution $\distr$ of the form \eqref{form1}.

\begin{lemma}\label{vagno} Suppose that  $0<\lambda\le 1/4$,
and let   $\kappa=\lambda/\sqrt{e}$. Assume  the hypothesis
\eqref{eq:n0pkap}. Suppose, in addition, that
\begin{equation}
\label{ee}
\lim_{\substack{p\to\infty\\ p\in\sP}}\frac{\big |\sN_p(p^\theta)\big|}{p^\theta}=\frac12\qquad (\theta\ge \lambda).
\end{equation}
Then  for all $\theta\ge0$ one has
\begin{equation}
\label{eq:Np-estee}
\lim_{\substack{p\to\infty\\ p\in\sP}}\frac{\big |\sN_p(p^\theta)\big|}{p^\theta}=\delta(\theta),
\end{equation}
where the density function $\delta$ has the form
\begin{equation}
\label{eq:intrhoee}
\delta(\theta):=\frac12\int_0^\theta\distr(u)\,du\qquad(\theta\ge 0)
\end{equation}
with 
$$
\distr(\theta)=\frac2\theta \qquad (\kappa\le\theta\le\lambda).
$$ 

\end{lemma}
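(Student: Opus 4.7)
The plan is to dispose of the endpoints trivially and concentrate all the work on the critical interval $[\kappa,\lambda]$. For $\theta < \kappa$, the set $\sN_p(p^\theta)$ is empty by \eqref{eq:n0pkap}, so \eqref{eq:Np-estee} holds with $\delta(\theta) = 0$. For $\theta \ge \lambda$, a direct computation with the formula $\distr(u) = 2/u$ on $[\kappa,\lambda]$ gives
$$
\delta(\theta) = \tfrac12\int_\kappa^\lambda \frac{2\,du}{u} = \log(\lambda/\kappa) = \tfrac12
$$
by the choice $\kappa = \lambda/\sqrt{e}$, so \eqref{eq:Np-estee} is exactly the hypothesis \eqref{ee}. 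The actual work is therefore confined to $\theta \in [\kappa,\lambda]$.

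For such $\theta$, the first step is to apply Lemma~\ref{lem:inc-excl}. The crucial arithmetic input is that $\kappa = \lambda/\sqrt{e}$ forces $\lambda < 2\kappa$, so any squarefree $k \in \sK_p$ with $\omega(k) \ge 2$ satisfies $k > p^{2\kappa} > p^\theta$; hence only prime $k = q$ with $(q|p) = -1$ contribute to the inclusion-exclusion sum. Moreover, for each such $q > p^\kappa$ the range $p^\theta/q < p^{\lambda - \kappa} < p^\kappa$ contains no quadratic nonresidues modulo $p$ (and certainly no multiple of $p$), so $|\{m \le p^\theta/q : (m|p) = +1\}| = \lfloor p^\theta/q\rfloor = p^\theta/q + O(1)$. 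Summing and absorbing the $O(\pi(p^\theta))$ error into $\error\cdot p^\theta$ would yield
$$
\frac{|\sN_p(p^\theta)|}{p^\theta} = \sum_{\substack{p^\kappa < q \le p^\theta \\ (q|p) = -1}} q^{-1} + \error.
$$

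I would then specialize this identity to $\theta = \lambda$. The left-hand side tends to $1/2$ by \eqref{ee}, while Mertens' theorem gives $\sum_{p^\kappa < q \le p^\lambda} q^{-1} \to \log(\lambda/\kappa) = 1/2$. Subtracting the two asymptotics forces
$$
\lim_{\substack{p\to\infty\\p\in\sP}}\sum_{\substack{p^\kappa < q \le p^\lambda \\ (q|p) = +1}} q^{-1} = 0.
$$
By monotonicity this vanishing persists when $p^\lambda$ is replaced by $p^\theta$ for every $\theta \in [\kappa,\lambda]$, so a second application of Mertens yields
$$
\lim_{\substack{p\to\infty\\p\in\sP}}\sum_{\substack{p^\kappa < q \le p^\theta \\ (q|p) = -1}} q^{-1} = \log(\theta/\kappa),
$$
and since $\log(\theta/\kappa) = \int_\kappa^\theta u^{-1}\,du = \delta(\theta)$ for the $\distr$ of the lemma, this is exactly the desired conclusion on $[\kappa,\lambda]$.

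The main obstacle is conceptual rather than technical: the hypothesis \eqref{ee} is only a pointwise statement at $\theta = \lambda$, yet it has to determine the full distribution of nonresidues across $[\kappa,\lambda]$. The reason this propagation succeeds is that at the critical ratio $\lambda/\kappa = \sqrt{e}$ the Mertens mass $\log(\lambda/\kappa) = 1/2$ exactly matches $\delta(\lambda)$, leaving \emph{no room} for primes in $(p^\kappa,p^\lambda]$ to be residues with positive density in the Mertens sense; once this is recognized, the monotone transfer to smaller $\theta$ and the reconstruction of $\distr$ are routine, but one must be careful to package the various $\error$ terms uniformly in $\theta$ over the compact interval $[\kappa,\lambda]$.
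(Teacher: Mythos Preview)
Your proof is correct and follows essentially the same route as the paper. The only cosmetic difference is that the paper obtains the key identity $|\sN_p(p^\theta)|=S_{p,1}(\theta)\,p^\theta+O(p^\theta/\log p)$ for $\kappa<\theta<\lambda$ by directly observing that, since $\theta<2\kappa$, every nonresidue $n\le p^\theta$ factors uniquely as $qm$ with $q$ a prime nonresidue, whereas you derive the same identity by specializing Lemma~\ref{lem:inc-excl}; the subsequent Mertens squeeze (your ``no room for residue primes'' argument) is equivalent to the paper's $\liminf/\limsup$ sandwich.
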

\begin{proof}
If $0\le\theta\le \kappa$, then \eqref{eq:Np-estee} is trivial with $\delta(\theta)=0$ 
in view of \eqref{eq:n0pkap}, whereas if $\theta\ge \lambda$,
then \eqref{eq:Np-estee} holds  with $\delta(\theta)=1/2$ by hypothesis.

Now suppose that $\kappa=\lambda/\sqrt{e}<\theta<\lambda$, and let $p\in\sP$ be fixed.
Since $2 \kappa>\theta$ it is
clear that a natural number $n\le p^\theta$ is a nonresidue if
and only if $n=qm$ for some prime nonresidue~$q$ and natural number $m$,
and in this case the pair $(q,m)$ is determined
uniquely by~$n$. Therefore,
\begin{equation}
\label{eq:expr1ee}
\big|\sN_p(p^\theta)\big|=\sum_{\substack{q\le p^\theta\\
(q|p)=-1}}\fl{\frac{p^\theta}{q}}=
S_{p,1}(\theta)\,p^\theta+O\bigg(\frac{p^\theta}{\log p}\bigg),
\end{equation}
where (as before)
$$
S_{p,1}(\theta):=\sum_{\substack{q\le p^\theta\\
(q|p)=-1}}q^{-1}.
$$
Note that we have used the Prime Number Theorem and the fact that
$\theta\gg 1$ to bound the error term in~\eqref{eq:expr1ee}.

From \eqref{ee} and \eqref{eq:expr1ee} one concludes that  the limit 
$$
\frac12=S_1(\lambda)=\lim_{\substack{p\to\infty\\ p\in\sP}}S_{1,p}(\lambda)
$$ 
exists. Using Mertens'  theorem, we have
\begin{align*}
 \frac12&=S_1(\lambda)=
 \liminf_{\substack{p\to\infty\\ p\in\sP}}S_{1,p}(\lambda)+\limsup_{\substack{p\to\infty\\ p\in\sP}}(S_1(\lambda)-S_{1,p}(\theta))\\
&\le \limsup_{\substack{p\to\infty\\ p\in\sP}}S_{1,p}(\theta)+\log \frac\lambda\theta\le\log\frac\theta\kappa+
 \log \frac\lambda\theta=\log\frac\lambda\kappa=\frac12,
\end{align*}
 and hence
\begin{align*}
 S_1(\theta)&=\liminf_{\substack{p\to\infty\\ p\in\sP}}
 S_{1,p}(\theta)
 =\limsup_{\substack{p\to\infty\\ p\in\sP}}
 S_{1,p}(\theta)
=\lim_{\substack{p\to\infty\\ p\in\sP}}
 S_{1,p}(\theta)
 =\log\frac\theta\kappa
\end{align*}
for all $\theta\in(\kappa,\lambda)$.
In turn, by  \eqref{eq:expr1ee} one gets that 
$$\lim_{\substack{p\to\infty\\ p\in\sP}}\frac{\big |\sN_p(p^\theta)\big|}{p^\theta}=\delta(\theta) \qquad (\kappa< \theta<\lambda),
$$
thus \eqref{eq:Np-estee} holds in the full range of $\theta\ge 0$.
\end{proof}
 
As  a corollary of Theorem~\ref{thm:b-main},
we are in a position to give an independent proof  the following result,
which is originally due to Heath-Brown
(for a more precise statement, see Diamond \emph{et al}~\cite[Appendix]{DMV}, where
a reconstruction of Heath-Brown's work is given).

\begin{corollary} Suppose that 
\begin{equation}
\label{eq:ironmanee}
(n|p)=1\qquad(1\le n\le p^{1/(4\sqrt{e})})
\end{equation}
for all primes $p$ in some infinite set $\sP$. Then for every zero
$z$ of the function
$$
H(z):=\frac{2}{z}\int_{1/\sqrt{e}}^1(1-e^{-zu})\,\frac{du}{u},
$$
there is a sequence $(\varrho_p)_{p\in\sP}$ such that each term $\varrho_p$
is a zero of the $L$-function $L(s,(\cdot|p))$, and
\begin{equation}
 \label{hb}(1-\varrho_p)\log p=-4z+\error.
 \end{equation}

\end{corollary}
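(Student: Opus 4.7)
The plan is to view this corollary as the special case of Theorem~\ref{thm:b-main} with $\kappa=1/(4\sqrt{e})$ and $\lambda=1/4$, and then translate the resulting statement about the nonzero roots of $\widehat\distr(k)=1$ into a statement about the zeros of $H$ via an elementary change of variables.

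First, I would verify that all hypotheses of Theorem~\ref{thm:b-main} are in force. The assumption \eqref{eq:ironmanee} is exactly \eqref{eq:n0pkap} for $\kappa=1/(4\sqrt{e})$, and with $\lambda=1/4$ we have $\kappa=\lambda/\sqrt{e}$, so $(\kappa,\lambda)$ falls in the boundary case of \eqref{lamkaphyp}. Lemma~\ref{lem:hildebrand} supplies the remaining input \eqref{ee} for $\theta\ge\lambda$, and therefore Lemma~\ref{vagno} applies and produces a distribution function $\delta$ together with the density $\distr(x)=(2/x)\,X_{\kappa,\lambda}(x)$ satisfying \eqref{eq:Np-est} and \eqref{eq:intrho}. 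This $\distr$ is smooth on $(\kappa,\lambda)$ with $\distr(\kappa)\distr(\lambda)=64\sqrt{e}\ne 0$, so Theorem~\ref{thm:b-main} indeed applies: for every nonzero root $k$ of $\widehat\distr(k)=1$ one obtains a sequence $(\varrho_p)_{p\in\sP}$ with $L(\varrho_p,(\cdot|p))=0$ and $(\varrho_p-1)\log p\to -ik$.

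Second, I would match the zero sets of $\widehat\distr-1$ and $H$. The substitution $u=4x$ converts the Fourier transform into
$$
\widehat\distr(k)=\int_{1/(4\sqrt{e})}^{1/4}\frac{2}{x}\,e^{ikx}\,dx=\int_{1/\sqrt{e}}^{1}\frac{2}{u}\,e^{iku/4}\,du,
$$
and since $\int_{1/\sqrt{e}}^{1}(2/u)\,du=1$, setting $z:=-ik/4$ gives the identity
$$
\widehat\distr(k)-1=-2\int_{1/\sqrt{e}}^{1}\frac{1-e^{-zu}}{u}\,du=-z\,H(z).
$$
A short Taylor expansion shows $H(0)=2(1-1/\sqrt{e})\ne 0$, so $z=0$ is not a zero of $H$, and the map $k\mapsto z=-ik/4$ is a bijection between the nonzero roots of $\widehat\distr-1$ and the zeros of $H$.

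Combining the two steps, for each zero $z$ of $H$ I would take $k:=4iz$, apply the conclusion of Theorem~\ref{thm:b-main} to this $k$, and observe that $(\varrho_p-1)\log p\to -ik=4z$, i.e.\ $(1-\varrho_p)\log p=-4z+\error$, which is exactly \eqref{hb}. I do not expect a serious obstacle: the deep content is already packaged in Theorem~\ref{thm:b-main} and Lemma~\ref{vagno}, and the only remaining subtlety is ensuring that the trivial root $k=0$ of $\widehat\distr-1$ is not accidentally picked up, a point that is handled by the nonvanishing of $H(0)$.
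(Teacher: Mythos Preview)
Your proposal is correct and follows the same route as the paper: invoke Lemma~\ref{lem:hildebrand} and Lemma~\ref{vagno} to verify the hypotheses of Theorem~\ref{thm:b-main} with $\kappa=1/(4\sqrt{e})$, $\lambda=1/4$ and $\distr(x)=(2/x)X_{[\kappa,\lambda]}(x)$, then relate the nonzero roots of $\widehat\distr(k)=1$ to the zeros of $H$ via the substitution $z=-ik/4$ (the paper records this as $H(-ik/4)=\tfrac{4i}{k}(1-\widehat\distr(k))$, which is your identity $\widehat\distr(k)-1=-zH(z)$). Your additional remarks that $\distr(\kappa)\distr(\lambda)=64\sqrt{e}\ne 0$ and $H(0)=2(1-1/\sqrt{e})\ne 0$ make explicit two points the paper leaves implicit.
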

\begin{proof}
Combining  \eqref{eq:ironmanee} with Lemma~\ref{lem:hildebrand}, we see that
the hypotheses of Lemma \ref{vagno} are met with 
$$
\kappa=\frac{1}{4\sqrt{e}}\qquad \text{and}\qquad  \lambda=\frac14.
$$
Therefore, the hypotheses
\eqref{eq:n0pkap},
\eqref{eq:Np-est}
and \eqref{eq:intrho} are met with 
 the function $\delta$ and the probability distribution $\distr$ given by \eqref{eq:dragon}
and \eqref{eq:Burgessrho}, respectively.
For the probability distribution $\distr$ given by \eqref{eq:Burgessrho}
one easily verifies that
$$
H\Big(-\frac{ik}{4}\Big)=\frac{4i}{k}\big(1-\widehat\distr(k)\big);
$$
therefore, the asymptotic representation  \eqref{hb}
follows from Theorem \ref{thm:b-main}.
\end{proof}

\section*{Acknowledgements}

The authors thank Ahmet G\"ulo\u glu, Alex Koldobsky,
Mikhail Lifshits, Wesley Nevans,
Mark Rudelson and Bob Vaughan for helpful conversations.

\end{document}